\newtheorem {thm}{Theorem}
\newtheorem* {thm*}{Theorem}
\newtheorem {cor}[thm]{Corollary}
\newtheorem* {cor*}{Corollary}
\newtheorem {lem}[thm]{Lemma}
\newtheorem {prop}[thm]{Proposition}
\newtheorem {defi}[thm]{Definition}
\newtheorem {rem}[thm]{Remark}
\theoremstyle{definition}
\newtheorem {exa}[thm]{Example}
\newtheorem* {conj*}{Conjecture}
\newtheorem* {quest*}{Question}
\DeclareMathOperator{\ord}{ord}
\DeclareMathOperator{\Gal}{Gal}
\DeclareMathOperator{\tors}{tors}
\DeclareMathOperator{\Aut}{Aut}
\DeclareMathOperator{\Dens}{Dens}
\DeclareMathOperator{\Res}{Res}
\newcommand{\lval}{v_\ell}
\newcommand{\Q}{\mathbb{Q}}
\newcommand{\Z}{\mathbb{Z}}
\newcommand{\T}{\mathbb{T}}
\newcommand{\p}{\mathfrak{p}}
\DeclareMathOperator{\GL}{GL}
\DeclareMathOperator{\im}{Im}
\newcommand{\tgsp}{\#\mathbb{T}}
\newcommand{\customlabel}[2]{%
   \protected@write \@auxout {}{\string \newlabel {#1}{{#2}{\thepage}{#2}{#1}{}} }%
   \hypertarget{#1}{}
}
\DeclareMathOperator{\Haus}{G}
\DeclareMathOperator{\open}{G}
\DeclareMathOperator{\imGal}{\mathcal G}
\DeclareMathOperator{\ZarGal}{\mathcal G_{Zar}}
\DeclareMathOperator{\ZarGalId}{\mathcal G^0_{Zar}}
\DeclareMathOperator{\ZarGalZ}{\overline{\mathcal G}}
\DeclareMathOperator{\subvar}{V}
\DeclareMathOperator{\algQ}{G}
\DeclareMathOperator{\algZ}{\overline{G}}
\DeclareMathOperator{\algRid}{G}
\DeclareMathOperator{\profi}{G}
\DeclareMathOperator{\profisub}{H}
\DeclareMathOperator{\fine}{F}
\DeclareMathOperator{\failure}{ c_{\text{Kummer}}}
\DeclareMathOperator{\set}{\mathcal W}
\DeclareMathOperator{\function}{w}
\DeclareMathOperator{\neighbourhood}{H}
\title{Reductions of points on algebraic groups}
\begin{document}

\author{Davide~Lombardo and Antonella~Perucca}
\address[]{Leibniz Universit\"at Hannover, Welfengarten 1, 30167 Hannover, Germany}
\email[]{lombardo@math.uni-hannover.de}
\address[]{Universit\"at Regensburg, Universit\"atsstra{\ss}e 31, 93053 Regensburg, Germany}
\email[]{antonella.perucca@mathematik.uni-regensburg.de}

\keywords{Number field, reduction, order, Kummer theory, density, algebraic group, abelian variety, torus, elliptic curve, Galois representations}
\subjclass[2010]{Primary: 11F80; Secondary: 14L10; 11G05; 11G10}

\begin{abstract}
Let $A$ be the product of an abelian variety and a torus defined over a number field $K$. Fix some prime number $\ell$. If $\alpha \in A(K)$ is a point of infinite order, we 
consider the set of primes $\mathfrak p$ of $K$ such that the reduction $(\alpha \bmod \mathfrak p)$ is well-defined and has order coprime to $\ell$. This set admits a natural density. By refining the method of R.~Jones and J.~Rouse (2010), we can express the density as an $\ell$-adic integral without requiring any assumption.
We also prove that the density is always a rational number whose denominator (up to powers of $\ell$) is uniformly bounded in a very strong sense.
For elliptic curves, we describe a strategy for computing the density which covers  every possible case.
\end{abstract}

\maketitle

\section{Introduction}

\subsection{Reductions of a point having order coprime to $\ell$}\label{sect:Intro1}

Let $A$ be a connected commutative algebraic group defined over a number field $K$, and fix some prime number $\ell$. Let $\alpha\in A(K)$ be a point of infinite order and consider the primes $\mathfrak p$ of $K$ for which the reduction of $\alpha$ modulo ${\mathfrak p}$ is well-defined and has order coprime to $\ell$. The aim of this paper is understanding the natural density of this set (provided it exists): 
$$\Dens_\ell(\alpha):=\Dens\{\,\mathfrak p\,:\; \ell\nmid \ord(\alpha \bmod \mathfrak p)\}\,.$$

\subsection{History of the problem} In the sixties, Hasse \cite{Hasse1, Hasse2} considered the case of $A$ being the multiplicative group over the rationals and gave parametric formulas for $\Dens_\ell(\alpha)$. For a survey of related questions for the rational numbers, see \cite{MR3011564} by Moree. 
The second author (partly joint with Debry) extended the method of Hasse to solve the case where $A/K$ is a $1$-dimensional torus over a number field \cite{PeruccaKummer, DebryPerucca, PeruccaTori}.
In \cite{Pink-order}, Pink gave a motivic interpretation of the problem for abelian varieties: considering the tree of $\ell^\infty$ division points over $\alpha$ gives a Tate module $T_{\ell}(A, \alpha)$ which is an extension of the Tate module $T_{\ell}(A)$ by $\Z_{\ell}$ (and is a particular case of the Tate module of 1-motives first described by Deligne in \cite{MR0498552}).

In \cite{JonesRouse}, Jones and Rouse considered the Galois action on the tree of $\ell^\infty$ division points over $\alpha$, which encodes the Kummer representation for $\alpha$ and the $\ell$-adic representation attached to $A$. In \cite[Theorem 3.8]{JonesRouse} they prove -- for any connected commutative algebraic group -- that if the image of the Kummer representation is as large as possible we have
$$\Dens_\ell(\alpha) = \int_{\imGal} \ell^{-\lval(\det(x-I))} \, d\mu_{\imGal} (x)$$
where ${\imGal}$ is the image of the $\ell$-adic representation, identified to a subgroup of a suitable general linear group $\GL_b(\Z_\ell)$, and $d\mu_{\imGal}(x)$ is the normalized Haar measure on ${\imGal}$.
They have also given criteria for their assumptions to be satisfied, and have determined the value of $\Dens_\ell(\alpha)$ for 1-dimensional tori and elliptic curves whenever the images of both the Kummer representation and the $\ell$-adic representation are as large as possible (under a small assumption for CM curves \cite[§5.2]{JonesRouse}).

\subsection{The general formula}

We suppose that the torsion/Kummer extensions $K(A[\ell^n], \ell^{-n}\alpha)$ grow maximally for every sufficiently large $n$ (cf. Definition \ref{defi-conditions}). For the product of an abelian variety and a torus we may assume this condition without loss of generality (cf. Remark \ref{nieuw}). 

In this situation, the natural density $\Dens_\ell(\alpha)$ exists by the argument of \cite[Theorem 3.2]{JonesRouse} and the remark following it. By refining the method of Jones and Rouse we can generalize \cite[Theorem 3.8]{JonesRouse} (which corresponds here to the case $\failure=1$ and $\function\equiv 1$):

\begin{thm}\label{thm-general}
Let $A/K$ be a connected commutative algebraic group defined over a number field, $\alpha\in A(K)$ a point of infinite order and $\ell$ a prime number as in Definition \ref{defi-conditions}. If ${\imGal}$ is the image of the $\ell$-adic representation, we have
\begin{equation}\label{general-formula2}
\Dens_\ell(\alpha)=  \failure\cdot \int_{{\imGal}}\ell^{-\lval(\det(x-I))} \cdot \function(x) \,\, d\mu_{{\imGal}}(x)
\end{equation}
where the constant $\failure:=\failure(A/K,\ell,\alpha)$ is as in Lemma \ref{lemma:DefinitionF} (it measures the failure of maximality for the Kummer extensions) and the function $\function:=\function(A/K,\ell,\alpha)$ is as in Lemma \ref{lemma:Extendw}
($\function(x)$ is either zero or a power of $\ell$ with exponent in $\mathbb Z_{\leqslant 0}$; this function measures a particular relation between the torsion and the Kummer extensions).
\end{thm}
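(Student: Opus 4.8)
The plan is to follow the strategy of Jones–Rouse \cite[Theorem 3.8]{JonesRouse} but to carefully track the two sources of "non-maximality" that the hypotheses of Definition \ref{defi-conditions} still permit, namely the bounded failure of the Kummer extensions (captured by the constant $\failure$) and the arithmetic relation between torsion and Kummer fields (captured by $\function$). The starting point is the observation that, for a prime $\mathfrak p$ of good reduction away from $\ell$ and at which $\alpha$ reduces to a point of finite order, the condition $\ell\nmid\ord(\alpha\bmod\mathfrak p)$ can be detected on the tree of $\ell^\infty$-division points: writing $\Frob_{\mathfrak p}$ for a Frobenius element acting on $T_\ell(A,\alpha)$, the order of $(\alpha\bmod\mathfrak p)$ is prime to $\ell$ precisely when $\alpha\bmod\mathfrak p$ lies in the image of multiplication by $\ell^n$ for all $n$, i.e. when the extension class of $T_\ell(A,\alpha)$ is "Frobenius-trivial" in a suitable sense. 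By the Chebotarev density theorem, $\Dens_\ell(\alpha)$ is then the Haar measure, inside the image $\widetilde{\mathcal G}$ of the Galois action on the full tree, of the set of elements $\widetilde x$ with this property; the image $\widetilde{\mathcal G}$ sits in an exact sequence with kernel the Kummer part (a subgroup of $\Z_\ell^b$) and quotient $\imGal\subseteq\GL_b(\Z_\ell)$.

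Next I would compute this measure by integrating over $\imGal$ the measure of the corresponding fibre in the Kummer direction. Fix $x\in\imGal$ and a lift $\widetilde x$. A direct computation with the cocycle describing the extension shows that the fibre over $x$ of "good" Kummer translates is either empty or a coset of a subgroup of $\Z_\ell^b$ whose index is $\ell^{\lval(\det(x-I))}$ — this is exactly the local factor $\ell^{-\lval(\det(x-I))}$ appearing in \cite[Theorem 3.8]{JonesRouse}, and it comes from the fact that $x-I$ measures how much of the ambient $\Z_\ell^b$ is "seen" by the difference $\alpha-\Frob_{\mathfrak p}\alpha$ in the division tower. When the Kummer image is as large as possible the fibre measure is *exactly* this, giving the Jones–Rouse formula; in general the Kummer image is a finite-index subgroup of the maximal one, and here is where $\failure$ enters: by Lemma \ref{lemma:DefinitionF} the deficiency of the Kummer image is, for $n$ large, independent of $n$ and contributes a uniform multiplicative constant $\failure$. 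The remaining subtlety is that the constraints cutting out the "good" fibre may interact with the torsion coordinates (i.e. with $x$ itself) in a way that is not uniform; this is precisely what the function $\function(x)$ from Lemma \ref{lemma:Extendw} records, and it replaces the naive local factor $\ell^{-\lval(\det(x-I))}$ by $\ell^{-\lval(\det(x-I))}\function(x)$, with $\function(x)$ either $0$ or a non-positive power of $\ell$.

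Assembling these pieces gives
\[
\Dens_\ell(\alpha)=\failure\cdot\int_{\imGal}\ell^{-\lval(\det(x-I))}\,\function(x)\,d\mu_{\imGal}(x),
\]
provided I justify two measure-theoretic points: that the integrand is measurable (it is, since $\lval(\det(x-I))$ is locally constant on the open set where $\det(x-I)\neq0$ and the set where it vanishes has measure zero or is handled by continuity of the partial sums, exactly as in \cite[Theorem 3.2]{JonesRouse}), and that one can legitimately pass from the finite-level Chebotarev counts to the $\ell$-adic integral by a dominated-convergence / monotone-limit argument, again following \cite[Theorem 3.2]{JonesRouse} and the remark after it, which also yields existence of the density.

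The main obstacle I anticipate is not the measure theory but the bookkeeping in the middle paragraph: correctly disentangling the contribution of the non-maximal Kummer image into the *global* constant $\failure$ versus the *pointwise* correction $\function(x)$, and checking that these two packaging choices (as made in Lemmas \ref{lemma:DefinitionF} and \ref{lemma:Extendw}) are mutually consistent and jointly account for \emph{all} of the failure of maximality allowed by Definition \ref{defi-conditions}. Concretely, this means verifying that after factoring out $\failure$ the residual obstruction depends only on the image of $x$ in a fixed finite quotient — so that $\function$ is well-defined on $\imGal$ — and that the product $\failure\cdot\function(x)$ reproduces the exact fibre measure for every $x$. Once that identification is in place, the integral formula follows formally.
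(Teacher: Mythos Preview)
Your outline is correct and follows essentially the same strategy as the paper: start from the cohomological/Chebotarev description $\Dens_\ell(\alpha)=\mu(S_\alpha)$ with $S_\alpha=\{(t,M):t\in\im(M-I)\}$, then compute this measure by pushing forward to $\imGal$ and measuring fibres. The paper's execution differs from yours mainly in how the passage to the limit is organised. Rather than invoking a Fubini-type argument directly on the profinite group $\Gal(K_\alpha/K)$ (which is only a subgroup of $T_\ell A\rtimes\imGal$, so the fibre measures need care), the paper first decomposes $\imGal$ as $\bigcup_F\mathcal{M}_F$ according to the isomorphism type of $\ker(M-I\mid A[\ell^\infty])$, works entirely at a finite level $n>\max(n_0,v_\ell(\exp F))$ where the counting is elementary, and obtains for each $F$ the exact identity
\[
\frac{\#S_{F,n}}{\#\Gal(K_{\alpha,n}/K)}=\frac{1}{\#\imGal(n)}\sum_{M\in\mathcal{M}_F(n)}\failure\cdot\ell^{-\det_\ell(M-I)}\cdot w_n(M),
\]
which already packages $\failure$ and $\function$ correctly. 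The limit in $n$ is then justified by a monotonicity argument together with an explicit lifting lemma showing $S_{F,n}$ is the image of $S_F$; summing over $F$ gives the integral.

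Two points where your sketch is thinner than the paper's argument: first, your claim that $\{x:\det(x-I)=0\}$ has measure zero in $\imGal$ is not automatic and is precisely Lemma~\ref{lemma:EverythingHasWellDefinedAB}, whose proof uses that Frobenii are dense and that $A(K_{\mathfrak p})_{\tors}$ is finite; second, the ``bookkeeping'' you flag as the main obstacle is resolved in the paper not by an abstract consistency check but by the finite-level identity above, which makes the factorisation $\failure\cdot\ell^{-\det_\ell(M-I)}\cdot w(M)$ drop out of the definitions \eqref{defi-F} and \eqref{wu} in one line.
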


\subsection{Rationality of the density}

For products of abelian varieties and tori, the density is always a rational number (this result is new even for elliptic curves):

\begin{thm}\label{thm:rationality}
If $(A/K,\ell,\alpha)$ are as in Definition \ref{defi-conditions}, $\Dens_\ell(\alpha)$ is a rational number.
\end{thm}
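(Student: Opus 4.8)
The plan is to read off rationality directly from the integral representation of Theorem~\ref{thm-general}. Since $\failure$ is a rational number by Lemma~\ref{lemma:DefinitionF} (it is an index-type quantity measuring the failure of Kummer maximality), it is enough to show that
\[
\mathcal I:=\int_{\imGal}\ell^{-\lval(\det(x-I))}\cdot\function(x)\;d\mu_{\imGal}(x)
\]
is rational. I would obtain this by recognizing $\mathcal I$ as a value of an Igusa-type $p$-adic zeta function. Consider, for a complex variable $s$,
\[
Z(s):=\int_{\imGal}\bigl|\det(x-I)\bigr|_\ell^{\,s}\cdot\function(x)\;d\mu_{\imGal}(x),
\]
so that $\mathcal I=Z(1)$. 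Because the integrand is bounded by $1$ and $\imGal$ has finite measure, $Z(s)$ is given by an absolutely convergent integral for $\operatorname{Re}(s)\ge0$ and is continuous there. The point is that $Z$ is a rational function of $\ell^{-s}$ with coefficients in $\Q$; granting this, $Z$ has no pole at $s=1$ (it is finite and continuous there), so $Z(1)$ is the value of such a rational function at the rational number $\ell^{-1}$, hence rational, and multiplying by $\failure$ gives $\Dens_\ell(\alpha)\in\Q$.

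It remains to explain why $Z(s)\in\Q(\ell^{-s})$, and this rests on two structural inputs. First, the integration domain is semialgebraic. Here the hypothesis that $A$ is (isogenous to) a product of an abelian variety and a torus is used: the $\ell$-adic representation attached to $A$ is semisimple (Faltings for the abelian part, trivially for the torus), so the Zariski closure $\mathcal G_{\mathrm{Zar}}$ of $\imGal$ in $\GL_b$ is reductive, and by a theorem of Bogomolov $\imGal$ is open in $\mathcal G_{\mathrm{Zar}}(\Q_\ell)$; being contained in $\GL_b(\Z_\ell)$, it is a compact open subgroup of the semialgebraic set $\mathcal G_{\mathrm{Zar}}(\Q_\ell)\cap\GL_b(\Z_\ell)$. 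Consequently $\imGal$ is a finite disjoint union of cosets of a normal open subgroup $\Gamma$, and for $\Gamma$ small enough $\Gamma=\mathcal G_{\mathrm{Zar}}(\Q_\ell)\cap(I+\ell^{N}M_b(\Z_\ell))$, so $\imGal$ and all these cosets are semialgebraic subsets of $M_b(\Z_\ell)\cong\Z_\ell^{b^2}$, carrying their canonical $\ell$-adic measure (which differs from the normalized Haar measure on $\imGal$ by a rational scalar). Second, the weight $\function$ is locally constant: by its construction in Lemma~\ref{lemma:Extendw} the value $\function(x)$ depends only on $x$ modulo a fixed power of $\ell$, it takes finitely many values, and each of them is $0$ or a non-positive power of $\ell$; hence each level set $\function^{-1}(\ell^{-j})$ is a finite union of cosets of $\Gamma$, in particular semialgebraic, and
\[
Z(s)=\sum_{j}\ell^{-j}\int_{\function^{-1}(\ell^{-j})}\bigl|\det(x-I)\bigr|_\ell^{\,s}\;d\mu_{\imGal}(x),
\]
a finite sum.

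Each summand is now a classical object: the $p$-adic integral of $|f|_\ell^{\,s}$, with $f$ the polynomial $\det(x-I)$, over a semialgebraic subset of $\Z_\ell^{b^2}$. By the rationality theorem of Denef for such $p$-adic integrals (the integrals over $\GL_b(\Z_\ell)$ and its congruence subgroups that occur here are exactly of the type treated in the theory of Igusa and of subgroup-growth zeta functions), each summand is a rational function of $\ell^{-s}$ with rational coefficients, and therefore so is $Z(s)$; as explained above this yields $\mathcal I=Z(1)\in\Q$. I expect the main obstacle to be the first structural input — establishing that $\imGal$, equivalently $\mathcal G_{\mathrm{Zar}}$, is semialgebraic of the expected shape, which is precisely where the restriction to products of abelian varieties and tori genuinely enters and where one must invoke the theorems of Faltings and Bogomolov — together with extracting from Lemma~\ref{lemma:Extendw} that $\function$ is genuinely locally constant, rather than merely measurable. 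Once $Z(s)$ is identified as a bona fide $p$-adic zeta function, rationality of its special value is automatic.
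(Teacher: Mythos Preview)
Your approach is essentially the paper's: both decompose the integral of Theorem~\ref{thm-general} into cosets using that $\imGal$ is open in its Zariski closure (the paper's Proposition~\ref{prop:GeneralizedBogomolov}) together with the local constancy of $\function$ (Lemma~\ref{lemma:Extendw}), and then invoke a Denef--Macintyre rationality theorem for $p$-adic zeta integrals. The paper in fact deduces Theorem~\ref{thm:rationality} by specialising the machinery built for the stronger uniformity statement Theorem~\ref{Conjecture} (Macintyre in place of Denef, since uniformity in $\ell$ is wanted there).

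The one technical step you pass over is the conversion of the \emph{Haar} integral on the possibly lower-dimensional group $\imGal\subset\operatorname{Mat}_b(\Z_\ell)$ into an honest integral against the standard $\ell$-adic measure: Denef's theorem as usually stated integrates $|f|^s\,dx$ over definable subsets of $\Z_\ell^{m}$, and with respect to that measure $\imGal$ may well have measure zero. Your sentence ``carrying their canonical $\ell$-adic measure (which differs from the normalized Haar measure on $\imGal$ by a rational scalar)'' hides exactly this issue. The paper handles it concretely by rewriting the Haar integral as a Poincar\'e series in point-counts over $\algRid(m)$ and then identifying that series with a genuine $\ell$-adic integral over an auxiliary full-dimensional definable set $\mathcal D_n\subset\operatorname{Mat}_b(\Z_\ell)\times\Z_\ell$; see \eqref{eq:SetD} and \eqref{Poinc}. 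Your parenthetical appeal to the subgroup-growth zeta-function literature (du Sautoy--Grunewald) is a legitimate alternative route to the same end, but it deserves to be named and made explicit rather than folded into ``Denef's theorem''.
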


In Section \ref{universality} we even prove (for all products of abelian varieties and tori) that the denominator of $\Dens_\ell(\alpha)$ can be universally bounded up to a power of $\ell$:

\begin{thm}\label{Conjecture}
Fix $g \geqslant 1$. There exists a polynomial $p_g(t) \in \mathbb{Z}[t]$ with the following property: whenever $K$ is a number field and $A/K$ is the product of an abelian variety and a torus with $\dim(A)=g$, then for all prime numbers $\ell$ and for all $\alpha \in A(K)$ we have
\[
\Dens_\ell(\alpha) \cdot p_g(\ell) \in \mathbb{Z}[1/\ell].
\]
\end{thm}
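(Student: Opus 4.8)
The plan is to reduce everything to the $\ell$-adic integral of Theorem~\ref{thm-general} and then to a uniform estimate for a family of $p$-adic integrals. By Remark~\ref{nieuw} we may assume that $(A/K,\ell,\alpha)$ is as in Definition~\ref{defi-conditions}, so that
\[
\Dens_\ell(\alpha)=\failure\cdot\int_{\imGal}\ell^{-\lval(\det(x-I))}\cdot\function(x)\,\,d\mu_{\imGal}(x),
\]
with $\imGal$ a closed subgroup of $\GL_b(\Z_\ell)$ and $b=\dim_{\Z_\ell}T_\ell(A)\leqslant 2g$. We already know from Theorem~\ref{thm:rationality} that $\Dens_\ell(\alpha)\in\Q$; only the prime-to-$\ell$ part of the denominator has to be controlled. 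By Lemma~\ref{lemma:Extendw} the function $\function$ takes only the values $0$ and $\ell^{-m}$, $m\in\Z_{\geqslant 0}$, with level sets that are finite unions of cosets of an open subgroup of $\imGal$; by Lemma~\ref{lemma:DefinitionF} the factor $\failure$ is, up to a power of $\ell$, a rational number with prime-to-$\ell$ denominator bounded in terms of $g$ (one expects it in fact to be a pure power of $\ell$). Hence it suffices to bound, uniformly in $\ell$, the prime-to-$\ell$ denominator of $\int_{gH}\ell^{-\lval(\det(x-I))}\,d\mu_{gH}$ as $G$ ranges over the closed subgroups of $\GL_b(\Z_\ell)$ and $gH$ over cosets of open subgroups $H\leqslant G$: if such a bound $q_b(\ell)$ exists, then $p_g(t):=q_{2g}(t)$, multiplied if necessary by the fixed polynomial from Lemma~\ref{lemma:DefinitionF} bounding the prime-to-$\ell$ denominator of $\failure$, does the job.

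For this group-theoretic estimate I would condition on the reduction $\bar x:=x\bmod\ell$. On the classes where $1$ is not an eigenvalue of $\bar x$ the integrand is $1$, contributing a rational number with prime-to-$\ell$ denominator dividing $|\GL_b(\F_\ell)|$. On a class where $1$ \emph{is} an eigenvalue of $\bar x$, an integral change of basis and a Schur-complement computation give $\det(x-I)=(\text{unit})\cdot\det(M)$ with $M$ running over a coset of a sublattice $L\subseteq M_d(\Z_\ell)$ ($1\leqslant d\leqslant b$) of matrices that are $\equiv N_0\bmod\ell$ for some nilpotent $N_0$. Thus each such class contributes a ``determinantal'' integral $\int_{Y}\ell^{-\lval(\det y)}\,d\mu_{Y}(y)$ over a coset $Y$ of a sublattice of $M_d(\Z_\ell)$; conditioning again on $y\bmod\ell$ and using the action $y\mapsto uyv$ of $\GL_d(\Z_\ell)\times\GL_d(\Z_\ell)$ to normalise the rank shows that these integrals recurse into (scaled copies of) the same type in dimension $\leqslant d$, with base case the Tamagawa identity $\int_{M_d(\Z_\ell)}\ell^{-\lval(\det z)}\,d\mu(z)=\frac{1-\ell^{-1}}{1-\ell^{-d-1}}$. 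If this recursion can be made to close up with all geometric tails having ratios among $\ell^{-1},\dots,\ell^{-M(b)}$ — so contributing only factors $\ell^j-1$ — one obtains $q_b$. The conceptually cleanest route, however, is probably to observe that the closed subgroups of $\GL_b(\Z_\ell)$ (and the relevant sublattices of $M_d(\Z_\ell)$) vary in uniformly definable families and that over $\Q_\ell$ the only hypersurface involved, $\{\det=0\}$, is universal: the $\ell$-uniform $p$-adic integration techniques of Denef and others then give that the corresponding Igusa-type zeta functions have poles at finitely many places independent of the data, and evaluating at $s=1$ yields a prime-to-$\ell$ denominator dividing a fixed product of cyclotomic-type factors.

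\textbf{The main obstacle} is exactly this uniformity over infinite families. The finite-level approximations of $\int_G\ell^{-\lval(\det(x-I))}\,d\mu_G$ always have prime-to-$\ell$ denominator dividing $|\GL_b(\F_\ell)|$, but the limit can acquire an extra factor from the self-similar tail — already $\int_{\Z_\ell^\times}\ell^{-\lval(x-1)}\,d\mu=\frac{\ell^2-\ell-1}{\ell^2-1}$ displays a stray $\ell+1$ — so passing to the limit is not formal: one has to show the recursion above really closes up after a number of steps bounded in terms of $b$ alone (the subtlety being that a sublattice $L$ can be arbitrarily deep, so the relevant chain of lattices, while finite, is not obviously of bounded length), or, equivalently, to check that the integrals genuinely fit a single definable family so that the $\ell$-uniform rationality theorems apply. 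Everything else — the reduction via Theorem~\ref{thm-general}, the harmless contributions of $\failure$ and $\function$, and the conditioning and Schur-complement bookkeeping — is routine.
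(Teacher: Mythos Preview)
You have the right architecture and you correctly flag the gap: the integrals must be placed in a single $L_P$-definable family, uniformly in $\ell$ and in the group, before a Macintyre--Denef theorem applies. What is missing is the structural input that makes this possible. Arbitrary closed subgroups $G\leqslant\GL_b(\Z_\ell)$ do \emph{not} form such a family, and your Schur-complement recursion over arbitrary sublattices of $M_d(\Z_\ell)$ will not terminate in a number of steps bounded by $b$ alone --- the depth of the sublattice is precisely the unbounded quantity you worry about. The paper resolves this by (i)~using that $\imGal$ is open in its Zariski closure $\ZarGal$ (Proposition~\ref{prop:GeneralizedBogomolov}), so one only needs to integrate over cosets $T_i\neighbourhood$ of congruence subgroups of $\ZarGalId(\Z_\ell)$; (ii)~proving that $\ZarGalId$ is of ``class~$\mathcal{C}$'' --- reductive with radical weights in $\{0,1\}$ --- via Faltings and weak Mumford--Tate theory (Proposition~\ref{prop:ClassC}); and (iii)~showing that every class-$\mathcal{C}$ subgroup of $\GL_{b}$ is cut out by at most $N(b)$ equations of degree at most $D(b)$ (Theorem~\ref{thm:FinitelyManyGroups}). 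The coefficients of these equations, together with the coset representative $T_i$ and the level $\ell^n$, then serve as the parameter $\lambda$ of a \emph{single} $L_P$-definable set (Proposition~\ref{prop:IntegralIsUniversal}), and Macintyre's uniform rationality theorem yields the universal denominator $r(t,u)$. A further ingredient you did not anticipate is a uniform bound $z(b)$ on $[\ZarGal:\ZarGalId]$ (Proposition~\ref{prop:ConnectedComponents}, via Serre's $\ell$-independence of $K^{\mathrm{conn}}$), needed when converting from $d\mu_{\imGal}$ to $d\mu_{\ZarGalId(\Z_\ell)}$.

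One minor correction: $\failure=\ell^{bn}/\#\Gal(K_{\alpha,n}/K_n)$ is already a pure power of $\ell$, since $\Gal(K_{\alpha,n}/K_n)$ embeds in $A[\ell^n]$; no auxiliary polynomial factor is needed for it.
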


\subsection{The case of elliptic curves}
We have collected in a companion paper \cite{LombardoPerucca1Eig} general results on $\GL_2(\mathbb Z_\ell)$ and all its Cartan subgroups (including the ramified ones): this leads to a very detailed classification of the elements in the image of the $\ell$-adic representation attached to any elliptic curve according to the structure of their group of fixed points in $A[\ell^\infty]$. Using this classification, we prove in Section \ref{EC} the following result:

\begin{thm}\label{compu-thm}
For elliptic curves $\Dens_\ell(\alpha)$ can be effectively computed. Furthermore, we have $\Dens_\ell(\alpha)\cdot (\ell-1)(\ell^2-1)^2(\ell^E-1) \in \mathbb{Z}[1/\ell]$, where $E=4$ if the elliptic curve has complex multiplication over $\overline{K}$ and $E=6$ otherwise.
\end{thm}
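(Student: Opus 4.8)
The plan is to feed the general density formula \eqref{general-formula2} into the structure theory of $\GL_2(\Z_\ell)$ and of its Cartan subgroups developed in the companion paper \cite{LombardoPerucca1Eig}, and then to evaluate the resulting integral as an explicit rational function of $\ell$ whose denominator can be read off by inspection. By Remark~\ref{nieuw} one may first assume that $(A/K,\ell,\alpha)$ satisfies the hypotheses of Definition~\ref{defi-conditions}, so that Theorem~\ref{thm-general} is available; it reduces the statement to (i) evaluating $\int_{\imGal}\ell^{-\lval(\det(x-I))}\function(x)\,d\mu_{\imGal}(x)$, and (ii) bounding the denominators of $\failure$ and $\function$, which by the lemmas defining them (Lemma~\ref{lemma:DefinitionF}, Lemma~\ref{lemma:Extendw}) contribute only a power of $\ell$ together with a bounded power of $\ell-1$.

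First I would record the possible shapes of $\imGal$. If $A$ has no complex multiplication over $\overline K$, then $\imGal$ is an open subgroup of $\GL_2(\Z_\ell)$ by Serre's open image theorem; if $A$ has CM over $\overline K$ by an order in an imaginary quadratic field $F$, then $\imGal$ is an open subgroup of a Cartan subgroup $C\subseteq\GL_2(\Z_\ell)$ when $F\subseteq K$, and of its normalizer when $F\not\subseteq K$, with $C$ split, non-split or ramified according to the splitting of $\ell$ in $F$ and the conductor of the order at $\ell$. In every case the integrand $x\mapsto\ell^{-\lval(\det(x-I))}$ is conjugation-invariant and depends on $x$ only through the elementary divisors of $x-I$, equivalently through the isomorphism type of the group of fixed points of $x$ in $A[\ell^\infty]$; combined with the local constancy of $\function$, this lets one partition $\imGal$ into finitely many measurable pieces on each of which the integrand is constant. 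On the "generic" piece, where $1$ is not an eigenvalue of $x$ modulo $\ell$, the integrand is genuinely locally constant and is determined by $\imGal\bmod\ell^{n_0}$ for an effectively bounded $n_0$. The remaining pieces accumulate on the degenerate locus $\{x:1\text{ is an eigenvalue of }x\text{ over }\Q_\ell\}$, and — using the self-similar structure of $\GL_2(\Z_\ell)$ (respectively of the Cartan) near a central element, together with the description in \cite{LombardoPerucca1Eig} of how the fixed-point group of a matrix degenerates as the matrix tends to a scalar — their total contribution splits into finitely many geometric series. Along a split or ramified Cartan direction such a series has common ratio $\ell^{-1}$ or $\ell^{-2}$; along a non-split Cartan direction the norm form $N_{\Q_{\ell^2}/\Q_\ell}$ doubles the exponent, giving ratio $\ell^{-4}$; and in the non-CM case the "zoom-in near a scalar" step turns the remaining integral into Igusa-type integrals of the determinant, $\int_{M_2(\Z_\ell)}\ell^{-s\lval(\det Z)}\,dZ$, which equal $\tfrac{\ell^2}{\ell^2+\ell+1}$ for $s=1$ and in general introduce the factor $\ell^3-1$. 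Summing the closed forms of these series with the finitely many generic terms exhibits $\Dens_\ell(\alpha)$ as an explicit rational function of $\ell$; since every ingredient ($\imGal\bmod\ell^{n_0}$, $\failure$, $\function$) is effectively computable, this proves the effectivity assertion.

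For the denominator bound I would track, for a fixed prime $p\neq\ell$, the $p$-part of the denominator of each ingredient. A generic term is a local density $\#\{\cdots\}/\#(\imGal\bmod\ell^{n})$; now $\#(\imGal\bmod\ell^{n})$ divides $\#\GL_2(\Z/\ell^{n})=\ell^{4n-3}(\ell-1)^2(\ell+1)$ in the non-CM case, and divides $2$ times the order of a Cartan subgroup of $\GL_2(\Z/\ell^{n})$ in the CM case, and for $p\neq\ell$ the $p$-adic valuation of each of these quantities is bounded — uniformly in $n$ and in $\imGal$ — by $v_p\big((\ell-1)(\ell^2-1)\big)$, so every generic term has $p$-adic denominator dividing $(\ell-1)(\ell^2-1)$. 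A geometric series of common ratio $\ell^{-j}$ contributes on top of this the extra factor $\ell^{j}-1$; by the previous paragraph the exponents $j$ that occur lie in $\{1,2,4\}$ in the CM case and in $\{1,2,3\}$ in the non-CM case, so the series contribute a denominator dividing $\lcm(\ell-1,\ell^2-1,\ell^{j}-1)$, which divides $\ell^4-1$ in the CM case and $\ell^6-1$ in the non-CM case. Combining this with the contribution of $\failure$ and $\function$ — only a power of $\ell$ and a bounded power of $\ell-1$ — and being deliberately wasteful with the exponents of $\ell-1$, $\ell+1$ and $2$, one obtains $\Dens_\ell(\alpha)\cdot(\ell-1)(\ell^2-1)^2(\ell^E-1)\in\Z[1/\ell]$ with $E=4$ in the CM case and $E=6$ otherwise. (The bound is not claimed to be optimal; in particular the factor $\ell^6-1$ leaves room for the cyclotomic factors that actually occur.)

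The hard part will be the second step: proving that the non-generic part of the integral genuinely collapses into finitely many geometric / Igusa-type pieces, and identifying exactly which common ratios $\ell^{-j}$ — equivalently which poles of the local zeta integrals — can occur, since this is precisely what produces the dichotomy between $E=4$ and $E=6$. This is where the detailed analysis of \cite{LombardoPerucca1Eig} is used essentially: the behaviour of the group of fixed points of a matrix as the matrix approaches a scalar (or, in the CM case, approaches the centre of the Cartan), and the separate treatment of the ramified Cartan subgroups. A secondary but crucial point, needed to make the final estimate uniform in $\imGal$ when no uniform bound on the index $[\GL_2(\Z_\ell):\imGal]$ is available, is to extract the uniformity purely from the divisibility $\#(\imGal\bmod\ell^{n})\mid\#\GL_2(\Z/\ell^{n})$ and from the stabilization of $v_p\big(\#\GL_2(\Z/\ell^{n})\big)$ as $n\to\infty$.
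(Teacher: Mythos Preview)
Your strategy matches the paper's: rewrite \eqref{general-formula2} as the sum \eqref{general-formula-ab} over the strata $\mathcal{M}_{a,b}$ (where $\ker(M-I)\cong\Z/\ell^a\Z\times\Z/\ell^{a+b}\Z$), pull the asymptotics of $\mu(\mathcal{M}_{a,b})$ from \cite{LombardoPerucca1Eig}, sum the resulting geometric series, and track denominators. The paper's bookkeeping is tidier than your split/non-split/Igusa heuristics: it simply invokes \cite[Theorem~1]{LombardoPerucca1Eig} together with Proposition~\ref{prop:ComputeDeltas} to get $\mu(\mathcal{M}_{a,b})\cdot\delta(a,b)\cdot\ell^{-2a-b}=c_S\,\ell^{-Ea-2b}$ on finitely many admissible pieces $S\subseteq\mathbb{N}^2$, so the $a$- and $b$-series have ratios $\ell^{-E}$ and $\ell^{-2}$ and contribute the factors $(\ell^E-1)$ and $(\ell^2-1)$, while the constant $c_S$ contributes $(\ell-1)(\ell^2-1)$ via the divisibility $\#\imGal(n)\mid\#\GL_2(\Z/\ell^n\Z)$.

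There is one genuine step you skip. Local constancy of $\function$ at level $n_0$ does not by itself collapse the sum: $\function$ is \emph{not} constant on each $\mathcal{M}_{a,b}$, only its average $\delta(a,b)=\mu(\mathcal{M}_{a,b})^{-1}\int_{\mathcal{M}_{a,b}}\function$ matters, and one must show that this average itself stabilizes on admissible pieces of $\mathbb{N}^2$. This is precisely Proposition~\ref{prop:ComputeDeltas}, and its proof requires the non-obvious uniformity statement that every $M_0\in\mathcal{M}_{a,b}(n_0)$ has the same number of lifts to $\mathcal{M}_{a,b}(n)$ for $n$ large (see \eqref{uniformity}, obtained from \cite[Theorems~27--28]{LombardoPerucca1Eig}); for ramified Cartan subgroups this uniformity can fail at small $a$, and the paper treats those cases separately. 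Your sentence ``combined with the local constancy of $\function$, this lets one partition $\imGal$ into finitely many measurable pieces on each of which the integrand is constant'' is therefore not quite right and hides the actual work. A minor correction: by Lemmas~\ref{lemma:DefinitionF} and~\ref{lemma:Extendw}, both $\failure$ and each value $\function(M)$ are powers of $\ell$, so they contribute nothing away from $\ell$; the extra factor $(\ell-1)$ in the final bound comes entirely from $c_S$, not from $\failure$ or $\function$.
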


\subsection{Further results} In Section \ref{interpretation} we give a cohomological interpretation of the density, and more precisely of \cite[Theorem 3.2]{JonesRouse}:

\begin{thm}\label{thm:interpretation}
If $(A/K,\ell,\alpha)$ are as in Definition \ref{defi-conditions}, $\Dens_\ell(\alpha)$ equals the Haar measure in $\Gal(K(A[\ell^\infty], \ell^{-\infty}\alpha)/K)$ of the set of automorphisms $\sigma$
such that the Kummer cohomology class of $\alpha$ (defined in Section \ref{sec-Kummerclass}) is in the kernel of the restriction map $$\Res_\sigma:  H^1\big(\Gal(K(A[\ell^\infty], \ell^{-\infty}\alpha)/K), T_\ell A\big) \rightarrow H^1({\langle \sigma \rangle}, T_\ell A)\,,$$
where $\langle \sigma \rangle$ denotes the procyclic subgroup generated by $\sigma$.
\end{thm}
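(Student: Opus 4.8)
The plan is to derive the statement from \cite[Theorem 3.2]{JonesRouse}, which shows that $\Dens_\ell(\alpha)$ exists and equals the Haar measure $\mu_G$ of the set
\[
S:=\bigl\{\sigma\in G:\ \text{for every }n\geqslant 1\text{ there is }\gamma\in A(\overline{K})\text{ with }\ell^n\gamma=\alpha\text{ and }\sigma\gamma=\gamma\bigr\},
\]
where $G:=\Gal\bigl(K(A[\ell^\infty],\ell^{-\infty}\alpha)/K\bigr)$ acts on the tree of $\ell^\infty$-division points of $\alpha$; the point is that, for all but finitely many $\mathfrak p$, one has $\ell\nmid\ord(\alpha\bmod\mathfrak p)$ exactly when $\Frob_{\mathfrak p}$ fixes an $\ell^n$-division point of $\alpha$ for every $n$, since $\ell\nmid\ord(\alpha\bmod\mathfrak p)$ means that $(\alpha\bmod\mathfrak p)$ is $\ell^n$-divisible in the residue field for every $n$. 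Hence it suffices to prove that $S=\{\sigma\in G:\Res_\sigma(\kappa)=0\}$, where $\kappa\in H^1(G,T_\ell A)$ is the Kummer class of $\alpha$.

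We first recall the cocycle description of $\kappa$. Fix a compatible system $(\beta_n)_{n\geqslant 1}$ in $A(\overline{K})$ with $\ell^n\beta_n=\alpha$ and $\ell\beta_{n+1}=\beta_n$ (all the $\beta_n$ lie in $K(A[\ell^\infty],\ell^{-\infty}\alpha)$); then $c\colon\sigma\mapsto(\sigma\beta_n-\beta_n)_n\in T_\ell A$ is a continuous $1$-cocycle, continuity holding because each $\sigma\mapsto\sigma\beta_n-\beta_n$ factors through the finite group $\Gal\bigl(K(A[\ell^n],\ell^{-n}\alpha)/K\bigr)$, its class in $H^1(G,T_\ell A)$ is $\kappa$, and a different choice of $(\beta_n)$ changes $c$ by a coboundary; here $G$ acts on $T_\ell A$ through its quotient $\Gal(K(A[\ell^\infty])/K)$. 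Now fix $\sigma\in G$. Since $T_\ell A$ is a finitely generated $\Z_\ell$-module with continuous action of $\langle\sigma\rangle$, and since $(\sigma-1)T_\ell A$ is the image of a continuous self-map of a compact group, hence closed, the quotient $T_\ell A/(\sigma-1)T_\ell A$ is again a finitely generated (in particular $\ell$-adically complete) $\Z_\ell$-module, and continuous cohomology of the procyclic group $\langle\sigma\rangle$ furnishes a canonical isomorphism $H^1(\langle\sigma\rangle,T_\ell A)\cong T_\ell A/(\sigma-1)T_\ell A$ under which $\Res_\sigma(\kappa)$ corresponds to the class of $c(\sigma)$. Therefore $\Res_\sigma(\kappa)=0$ if and only if $c(\sigma)\in(\sigma-1)T_\ell A$.

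It remains to check that $c(\sigma)\in(\sigma-1)T_\ell A$ is equivalent to $\sigma\in S$. If $c(\sigma)=(\sigma-1)v$ with $v=(v_n)_n\in T_\ell A$, then $\sigma\beta_n-\beta_n=(\sigma-1)v_n$, so $\sigma(\beta_n-v_n)=\beta_n-v_n$ and $\ell^n(\beta_n-v_n)=\alpha$ for every $n$, whence $\sigma\in S$. Conversely, if $\sigma\in S$, reducing modulo $\ell^n$ the identity $\sigma\gamma=\gamma$ for a fixed $\ell^n$-division point $\gamma$ shows that $\sigma\beta_n-\beta_n\in(\sigma-1)A[\ell^n]$ for every $n$; thus each $X_n:=\{v\in T_\ell A:(\sigma-1)v\equiv c(\sigma)\pmod{\ell^n}\}$ is nonempty, and it is closed in the compact group $T_\ell A$ with $X_{n+1}\subseteq X_n$, so $\bigcap_n X_n\neq\emptyset$ and any $v$ in the intersection satisfies $(\sigma-1)v=c(\sigma)$ because $\bigcap_n\ell^nT_\ell A=0$. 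This proves $S=\{\sigma\in G:\Res_\sigma(\kappa)=0\}$, and combining with \cite[Theorem 3.2]{JonesRouse} gives the theorem. (Incidentally $S=\bigcap_n U_n$ with $U_n:=\{\sigma:\sigma\beta_n-\beta_n\in(\sigma-1)A[\ell^n]\}$ a finite union of cosets of $\Gal\bigl(K(A[\ell^\infty],\ell^{-\infty}\alpha)/K(A[\ell^n],\ell^{-n}\alpha)\bigr)$, hence clopen and decreasing in $n$, so that $S$ is measurable and $\mu_G(S)=\lim_n\mu_G(U_n)$.) The one non-formal step is this final equivalence: the $\sigma$-fixed division points produced one level at a time need not be compatible, because the transition maps $A[\ell^{n+1}]^{\sigma}\xrightarrow{\ \ell\ }A[\ell^n]^{\sigma}$ need not be surjective, and it is precisely the compactness of $T_\ell A$ that supplies a coherent choice; everything else — the cocycle description of $\kappa$, the computation of $H^1(\langle\sigma\rangle,-)$, and the measurability of $S$ — is routine.
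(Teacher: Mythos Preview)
Your proof is correct and follows the same overall strategy as the paper: invoke \cite[Theorem 3.2]{JonesRouse} to express $\Dens_\ell(\alpha)$ as the Haar measure of the set $S_\alpha$ of automorphisms fixing an $\ell$-division point at every level, and then identify this set with the cohomological locus $\{\sigma : \Res_\sigma(C_\alpha)=0\}$.

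There is one genuine difference in how the identification is carried out. The paper uses the growth conditions \eqref{C2i}--\eqref{C2ii} to show that any $\tau\in S_{\alpha,n}$ lifts to some $\tau'\in S_{\alpha,n+1}$ fixing a \emph{compatible} division point, and then argues by passage to the limit that $S_{\alpha,n}$ is the image of $S_\alpha$; from closedness of $S_\alpha$ one deduces $\mu(S_\alpha)=\lim_n \#S_{\alpha,n}/\#\Gal(K_{\alpha,n}/K)$. You instead fix $\sigma$ once and for all and use compactness of $T_\ell A$ (finite intersection property for the closed sets $X_n$) to upgrade ``$\sigma$ fixes a point at each level'' to ``$c(\sigma)\in(\sigma-1)T_\ell A$'', i.e.\ to ``$\sigma$ fixes a compatible sequence''. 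Your route has the advantage of making explicit the step that the paper leaves implicit (the equality of the two descriptions of $S_\alpha$), and of not re-using the growth conditions beyond what is already needed to invoke \cite[Theorem 3.2]{JonesRouse}. The paper's route, on the other hand, makes the surjectivity $S_\alpha\twoheadrightarrow S_{\alpha,n}$ transparent. Both are valid; the substantive content is the same.
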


The value of $\Dens_\ell(\alpha)$ is indeed only related to the field $K(A[\ell^\infty], \ell^{-\infty}\alpha)$, see Proposition \ref{alien}.
Finally, in Section \ref{sect:asymptotics} we show that (when $A$ is the product of an abelian variety and a torus) $\Dens_\ell(\ell^n\alpha)$ converges to $1$ as $\ell^n$ tends to infinity, and this uniformly in the choice of $\alpha \in A(K)$.

\subsection{Notation} 
If $\Haus$ is a compact Hausdorff topological group, we denote by $\mu_{\Haus}$ (or simply by $\mu$) its normalized Haar measure.
We denote by $\ell$ a fixed prime number and call $v_\ell$ the $\ell$-adic valuation on $\mathbb{Q}_\ell$ (we set $v_\ell(0)=+\infty$ and $\ell^{-\lval(0)}=0$).
We write $\operatorname{Mat}_b(\mathbb{Z}_\ell)$ for the ring of $b \times b$ matrices with coefficients in $\mathbb{Z}_\ell$, and define  analogously $\operatorname{Mat}_b(\mathbb{Z}/\ell^n\mathbb{Z})$. We denote by $I$ the identity matrix/endomorphism. The $\ell$-adic valuation of a matrix is the minimum of the valuations of its entries, and we write $\det_{\ell}$ for the $\ell$-adic valuation of the  determinant.
Because of its frequent use, we reserve the notation $\imGal$ for the image of the $\ell$-adic representation.

\subsection*{Acknowledgements} 
We thank R.\@ Jones, J.\@ Rouse, P.\@ Jossen and A.\@ Sutherland for helpful discussions. The second author gratefully acknowledges funding from the SFB-Higher Invariants at the University of Regensburg.

\section{Torsion fields and Kummer extensions}\label{sect:Arboreal}

\subsection{The torsion, Kummer, and arboreal representations}
We recall from \cite{JonesRouse} the construction of the {arboreal representation} attached to $A/K$ and to a point $\alpha \in A(K)$, which describes the natural Galois action on the tree of division points over $\alpha$.

If $A$ is a connected commutative algebraic group, we define its Tate module $T_\ell A$ as the projective limit of the torsion groups $A[\ell^n]$ (the transition homomorphism $A[\ell^{n+1}]\rightarrow A[\ell^n]$ is multiplication by $\ell$). The Tate module is a $\Z_\ell$-module isomorphic to $\Z_\ell^b$, where $b$ is the first Betti number of $A$ (for elliptic curves $b=2$).

The \emph{torsion} (or \emph{$\ell$-adic}) \emph{representation} of $A$ is the representation of $\Gal(\overline{K}/K)$ with values in the automorphism group of $T_\ell A$ which is induced by the natural Galois action on the torsion points of $A$. Choosing a $\Z_\ell$-basis for $T_\ell A$ means fixing an isomorphism of $T_\ell A$ with $\Z_\ell^b$, so the choice of a basis allows us to identify the image of the $\ell$-adic representation with a subgroup of $\GL_b(\Z_\ell)$.
We also consider the \emph{mod $\ell^n$ representation}, whose image is a subgroup of $\GL_b(\Z/\ell^n\Z)$ and which describes the Galois action on $A[\ell^n]$.

The \emph{Kummer representation} depends both on $A/K$ and the point $\alpha$. For every $n\geqslant 1$ we call $\ell^{-n}(\alpha)$ the set of points in $A(\overline{K})$ whose $\ell^n$-th multiple equals $\alpha$. The fields
$$K_{n}:=K(A[\ell^n])\qquad \text{and}\qquad K_{\alpha,n}:=K_n(\ell^{-n}(\alpha))$$ are then finite Galois extensions of $K$. We denote the countable union of these fields by $K_{\infty}$ and $K_\alpha$ respectively.
We then define the {Kummer map} as
\begin{equation}\label{Kummer-rep}
\begin{array}{ccc}
 \Gal(K_\alpha/K_\infty) & \to & T_\ell A \\
  \sigma & \mapsto & \left( \sigma(\beta_n)-\beta_n \right)_{n \geqslant 1}
\end{array}
\end{equation}
where $\{\beta_n\}_{n \geqslant 1}$ is any sequence of points $\beta_n \in A(\overline{K})$ satisfying $\ell \beta_1=\alpha$ and $\ell \beta_{n+1}=\beta_n$ for all $n \geqslant 1$.
The definition does not depend on the choice of the sequence because $\sigma$ is the identity on $K_\infty$.

The \emph{arboreal representation} encodes both the $\ell$-adic representation and the Kummer representation, being the map
\begin{equation}\label{eq:ArborealRepresentation}
\begin{array}{cccc}
\omega : & \Gal(K_{\alpha}/K) & \to & T_\ell A \rtimes \Aut(T_\ell A)\\
& \sigma & \mapsto & (t_{\sigma}, M_{\sigma})
\end{array}
\end{equation}
where $M_{\sigma}$ is the image of $\sigma$ (more precisely, of any lift of $\sigma$ to $\Gal(\overline{K}/K)$) under the $\ell$-adic representation and where we define
$t_{\sigma}:= \left( \sigma(\beta_n)-\beta_n \right)_{n \geqslant 1}$ for some fixed choice of $\{\beta_n\}_{n \geqslant 1}$ as above. The arboreal representation is an injective group homomorphism that identifies $\Gal(K_{\alpha}/K)$ to a subgroup of $T_\ell A \rtimes \Aut(T_\ell A)\cong \Z_\ell^b \rtimes \GL_b(\Z_\ell)$. With this identification, we shall write 
\begin{equation}\label{notation-sigma}
\sigma=(t_{\sigma}, M_{\sigma}).
\end{equation}
We employ the same notation for $\sigma \in \Gal(K_{\alpha,n}/K)$, in which case  we have $t_\sigma \in A[\ell^n] \cong (\Z/\ell^n\Z)^b$ and $M_\sigma \in \Aut A[\ell^n] \cong \GL_b(\Z/\ell^n\Z)$. 

\subsection{Growth conditions for torsion fields and Kummer extensions}\label{torsionkummer}
We denote by $\imGal \subseteq \GL_{b}(\Z_\ell)$ the image of the $\ell$-adic representation and by $\imGal(n)$ the image of the mod $\ell^n$ representation, i.e.\@ the image of $\imGal$ under the natural projection $\GL_{b}(\mathbb{Z}_\ell) \to \GL_b(\Z/\ell^n\Z)$.
We write $\dim \imGal$ for the dimension of the  Zariski closure of $\imGal$ in $\GL_{b,\mathbb{Q}_\ell}$.

If $A$ is an elliptic curve, by work of Serre \cite{Serre} and by the classical theory of complex multiplication \cite{MR0236190} we know the following: if $\operatorname{End}_{\overline{K}}(A)=\mathbb{Z}$, then $\imGal$ is an open subgroup of $\GL_2(\Z_\ell)$ ($\dim \imGal=4$), and otherwise $\imGal$ is open in the normalizer of a Cartan subgroup of $\GL_2(\Z_\ell)$ ($\dim \imGal =2$). See \cite{LombardoPerucca1Eig} for a classification of all Cartan subgroups and their normalizers.

\begin{defi}\label{defi-conditions} We say that $(A/K,\ell)$ satisfy the  
\emph{eventual maximal growth of the torsion fields} if there exists a positive integer $n_0$ such that we have 
\begin{equation}\label{C1}
\text{$\#\imGal(n+1)/\#\imGal(n)=\ell^{\dim \imGal}$\qquad for every $n\geqslant n_0$ .}
\tag{C1}
\end{equation}
We say that $(A/K,\ell, \alpha)$  satisfy the \emph{eventual maximal growth of the Kummer extensions} if there exists a positive integer $n_0$ such that we have:
\customlabel{C2}{C2}
\begin{equation}\label{C2i}
\text{$K_{n, \alpha}$ and $K_{n'}$ are linearly disjoint over $K_{n}$\qquad for every $n'\geqslant n \geqslant n_0$} \tag{C2i}
\end{equation}
\begin{equation}
\text{$[K_{n'}(\ell^{-n'}\alpha):K_{n'}(\ell^{-n}\alpha)]=\ell^{b(n'-n)}$\qquad for every $n'\geqslant n \geqslant n_0$\,.} \tag{C2ii}\label{C2ii}
\end{equation}
\end{defi}

Equivalently, there exists a positive integer $n_0$ such that we have
\begin{equation}\label{refomulation-Defi}
[K_{n'}(\ell^{-n'}\alpha):K_{n}(\ell^{-n}\alpha)]=(\ell^{b+\dim \imGal})^{n'-n}\qquad \text{for every $n'\geqslant n \geqslant n_0$}\,.
\end{equation}

\subsection{Results on the growth conditions}

\begin{rem}\label{nieuw}
Let $A/K$ be the product of an abelian variety and a torus. Then $A$ satisfies Condition \eqref{C1} for any prime $\ell$; moreover, it also satisfies Conditions \eqref{C2i} and \eqref{C2ii} for any $\alpha \in A(K)$ such that $\mathbb Z \alpha$ is Zariski-dense in $A$.
These facts follow from Lemma \ref{Gisp} below and \cite[Theorem 2]{Bertrand}. Thus if $A/K$ is the product of an abelian variety and a torus and $\alpha$ has infinite order, we may always reduce to the situation of Definition \ref{defi-conditions} by \cite[Main Theorem]{MR2473894}. Indeed, consider the smallest algebraic subgroup $A'$ of $A$ containing $\alpha$. If the number $n$ of connected components of $A'$ is divisible by $\ell$,  
then we have $\Dens_\ell(\alpha)=0$. Otherwise we may replace $\alpha$ by $[n]\alpha$ and hence work with the connected component of the identity of $A'$ in place of $A$. In this case we have $\Dens_\ell(\alpha)>0$.
\end{rem}

\begin{lem}\label{lemma:DefinitionF}
If \eqref{C2i}-\eqref{C2ii} hold, the following rational number is independent of $n$  for $n \geqslant n_0$:
\begin{equation}\label{defi-F}
\failure:=\ell^{bn}/\#\Gal(K_{\alpha,n}/K_{n})\,.
\end{equation}
\end{lem}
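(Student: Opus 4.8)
The plan is to show that the quantity $\ell^{bn}/\#\Gal(K_{\alpha,n}/K_n)$ stabilises for $n \geqslant n_0$ by analysing how passing from level $n$ to level $n+1$ affects numerator and denominator. First I would observe that $\Gal(K_{\alpha,n}/K_n)$ is precisely the image of the Kummer map at level $n$, i.e.\ the subgroup of $A[\ell^n]\cong(\Z/\ell^n\Z)^b$ consisting of the translation parts $t_\sigma$ of elements $\sigma\in\Gal(K_{\alpha,n}/K_n)$ (this uses that $\sigma$ acts trivially on $K_n=K(A[\ell^n])$, so $M_\sigma=I$ and $\sigma=(t_\sigma,I)$). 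Hence $\#\Gal(K_{\alpha,n}/K_n)=[K_n(\ell^{-n}\alpha):K_n]$, an integer between $1$ and $\ell^{bn}$, and $\failure$ at level $n$ is the index of this subgroup in $(\Z/\ell^n\Z)^b$.

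Next I would compare the levels $n$ and $n+1$ for $n\geqslant n_0$. Using \eqref{C2i} with $n'=n+1$, the extension $K_{n+1,\alpha}$ is linearly disjoint from $K_{n+1}$ over $K_{n+1}$—more precisely one wants: $K_n(\ell^{-n}\alpha)$ and $K_{n+1}$ are linearly disjoint over $K_n$, so that
\[
[K_{n+1}(\ell^{-n}\alpha):K_{n+1}] = [K_n(\ell^{-n}\alpha):K_n].
\]
Then \eqref{C2ii} with $n'=n+1$ gives $[K_{n+1}(\ell^{-(n+1)}\alpha):K_{n+1}(\ell^{-n}\alpha)]=\ell^{b}$. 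Multiplying, $[K_{n+1}(\ell^{-(n+1)}\alpha):K_{n+1}] = \ell^{b}\cdot[K_n(\ell^{-n}\alpha):K_n]$, that is $\#\Gal(K_{\alpha,n+1}/K_{n+1}) = \ell^b\cdot\#\Gal(K_{\alpha,n}/K_n)$. Dividing by $\ell^{b(n+1)}=\ell^b\cdot\ell^{bn}$ shows the ratio defining $\failure$ is unchanged in passing from $n$ to $n+1$, hence constant for all $n\geqslant n_0$ by induction. Rationality (indeed it is $1/(\text{a power of }\ell)$ in the ideal situation, but in general merely a ratio of two positive integers) is then immediate from the definition.

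The one genuinely delicate point is the linear disjointness step: condition \eqref{C2i} as stated gives disjointness of $K_{n,\alpha}=K_n(\ell^{-n}\alpha)$ and $K_{n'}$ over $K_n$, and I must extract from it the equality $[K_{n+1}(\ell^{-n}\alpha):K_{n+1}]=[K_n(\ell^{-n}\alpha):K_n]$. This follows because $K_{n+1}(\ell^{-n}\alpha)$ is the compositum of $K_{n,\alpha}$ and $K_{n+1}$, and linear disjointness over $K_n$ gives $[K_{n,\alpha}\cdot K_{n+1}:K_{n+1}]=[K_{n,\alpha}:K_n]$. One must be slightly careful that the Kummer map is well-defined independently of the chosen compatible system $\{\beta_n\}$—but this is already recorded in the excerpt—so the group $\Gal(K_{\alpha,n}/K_n)$ really is what is being measured. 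No step requires the full force of \eqref{C1}; only the Kummer conditions \eqref{C2i}–\eqref{C2ii} enter, exactly as in the hypothesis of the lemma.
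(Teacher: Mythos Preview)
Your proof is correct and follows essentially the same approach as the paper: both use \eqref{C2ii} to compute the degree jump in the Kummer tower and \eqref{C2i} to identify degrees after base-changing to a larger torsion field. The only cosmetic difference is that the paper compares level $n$ directly to level $n_0$ in a single chain of equalities, whereas you compare $n$ to $n+1$ and induct.
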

\begin{proof}
$\#\Gal(K_{\alpha,n}/K_{n}) \cdot \ell^{-b(n-n_0)}= \#\Gal(K_{n}(\ell^{-n_0}\alpha)/K_{n})=\#\Gal(K_{\alpha,n_0}/K_{n_0})\,.$ 
\end{proof}

\begin{lem}\label{matrici}
Let $\algQ$ be an algebraic subgroup of $\GL_{b, \mathbb{Q}_\ell}$. Define $\algZ:=\algQ(\mathbb{Q}_\ell) \cap \GL_b(\Z_\ell)$ and write $\algZ(n)$ for the reduction modulo $\ell^n$ of $\algZ$. The sequence $\#\algZ(n+1)/\#\algZ(n)$ is non-decreasing for $n\geqslant 2$ and it is eventually equal to $\ell^{\dim \algQ}$.
\end{lem}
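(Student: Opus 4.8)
The plan is to work with the standard filtration of $\GL_b(\mathbb{Z}_\ell)$ by the congruence subgroups $\Gamma_n := \ker(\GL_b(\mathbb{Z}_\ell) \to \GL_b(\mathbb{Z}/\ell^n\mathbb{Z}))$, and to exploit the fact that for $n \geqslant 1$ the quotients $\Gamma_n / \Gamma_{n+1}$ are abelian $\ell$-groups with a simple linear structure: reduction induces an isomorphism $\Gamma_n/\Gamma_{n+1} \cong \ell^n \operatorname{Mat}_b(\mathbb{Z}_\ell)/\ell^{n+1}\operatorname{Mat}_b(\mathbb{Z}_\ell) \cong \operatorname{Mat}_b(\mathbb{F}_\ell)$, via $I + \ell^n X \mapsto X \bmod \ell$. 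Setting $\overline{G}_n := \overline{G} \cap \Gamma_n$, we have $\#\overline{G}(n+1)/\#\overline{G}(n) = [\overline{G}_n : \overline{G}_{n+1}]$, so it suffices to show this index is non-decreasing in $n$ (for $n \geqslant 2$, say, to be safe about the prime $\ell=2$) and eventually equals $\ell^{\dim G}$.

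First I would identify, for each $n \geqslant 1$, the image $V_n \subseteq \operatorname{Mat}_b(\mathbb{F}_\ell)$ of $\overline{G}_n$ under the above isomorphism; this is an $\mathbb{F}_\ell$-subspace, and $[\overline{G}_n : \overline{G}_{n+1}] = \ell^{\dim_{\mathbb{F}_\ell} V_n}$. The key monotonicity input is that $V_{n+1} \supseteq V_n$ once $n$ is large enough — more precisely, I would show $V_n \subseteq V_{n+1}$ for all $n \geqslant 2$: given $g = I + \ell^n X \in \overline{G}_n$, one forms a suitable power or commutator, e.g.\ $g^\ell = I + \ell^{n+1} X + \binom{\ell}{2}\ell^{2n} X^2 + \cdots$, which lies in $\overline{G}_{n+1}$ (since $\overline{G}$ is a group) and reduces to the same class $X \bmod \ell$ in $\operatorname{Mat}_b(\mathbb{F}_\ell)$ provided $2n \geqslant n+2$, i.e.\ $n \geqslant 2$; this forces $X \bmod \ell \in V_{n+1}$. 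Hence $(\dim_{\mathbb{F}_\ell} V_n)_{n\geqslant 2}$ is non-decreasing, giving the first assertion.

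For the stabilization, since $\dim_{\mathbb{F}_\ell} V_n$ is a non-decreasing integer sequence bounded by $b^2$, it is eventually constant, equal to some integer $d$. It remains to identify $d$ with $\dim G$. For this I would use that, for $n$ large, $\overline{G}_n$ is a pro-$\ell$ group, in fact an $\ell$-adic analytic group whose associated $\mathbb{Z}_\ell$-Lie algebra $\mathfrak{g} \subseteq \operatorname{Mat}_b(\mathbb{Z}_\ell)$ has $\mathbb{Q}_\ell$-rank equal to $\dim G$ (the Lie algebra of $\overline{G}_n$, for $n$ large, coincides with that of the algebraic group $G$ base-changed appropriately, since $\overline{G} = G(\mathbb{Q}_\ell) \cap \GL_b(\mathbb{Z}_\ell)$ is open in $G(\mathbb{Q}_\ell)$). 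The stable value of $V_n$ is precisely $(\mathfrak{g} \cap \ell^n\operatorname{Mat}_b)/(\mathfrak{g}\cap\ell^{n+1}\operatorname{Mat}_b)$ pushed into $\operatorname{Mat}_b(\mathbb{F}_\ell)$, which for $n$ large has dimension equal to $\operatorname{rank}_{\mathbb{Z}_\ell}\mathfrak{g} = \dim G$ by the theory of $\mathbb{Z}_\ell$-lattices (choosing a $\mathbb{Z}_\ell$-basis of $\mathfrak{g}$ adapted to the lattice $\operatorname{Mat}_b(\mathbb{Z}_\ell)$). Alternatively, and perhaps more cleanly, I would invoke the standard fact that for an $\ell$-adic analytic group the index $[\overline{G}_n : \overline{G}_{n+1}]$ equals $\ell^{\dim \overline{G}}$ for all sufficiently large $n$, where $\dim\overline{G}$ as an analytic manifold equals $\dim G$; this is essentially the statement that the Poincaré series of the graded group $\bigoplus_n \overline{G}_n/\overline{G}_{n+1}$ is eventually a polynomial of the expected degree.

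The main obstacle is the second part: cleanly pinning down that the eventual common dimension $d$ equals $\dim G$ rather than merely some group-theoretic invariant. The monotonicity is an elementary binomial-coefficient computation, but the identification with $\dim G$ genuinely requires either $\ell$-adic Lie theory (Lazard, or Dixon--du Sautoy--Mann--Segal) or a direct argument that $\overline{G}$ contains a congruence-type neighbourhood of the identity inside $G(\mathbb{Q}_\ell)$ whose graded pieces have dimension $\dim G$; I would cite the analytic-group fact to keep the proof short, since the excerpt's context (images of $\ell$-adic representations) makes this machinery natural to invoke.
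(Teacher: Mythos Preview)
Your monotonicity argument is essentially identical to the paper's: both identify $\ker(\overline{G}(n+1)\to\overline{G}(n))$ with an $\mathbb{F}_\ell$-subspace $V_n\subseteq\operatorname{Mat}_b(\mathbb{F}_\ell)$ via $M\mapsto\ell^{-n}(M-I)$, and both prove $V_n\subseteq V_{n+1}$ by taking an $\ell$-th power and using $\ell^{2n}\equiv 0\pmod{\ell^{n+2}}$ for $n\geqslant 2$. The paper phrases the lift slightly differently (choosing a preimage $\tilde M\in\overline{G}$ of an element of $\overline{G}(n+1)$ and then raising $\tilde M$ to the $\ell$-th power), but it is the same computation.

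The only genuine divergence is in identifying the eventual value with $\ell^{\dim G}$. You propose invoking $\ell$-adic Lie theory (Lazard, or Dixon--du~Sautoy--Mann--Segal) to say that an open compact subgroup of $G(\mathbb{Q}_\ell)$ has congruence quotients of eventual size $\ell^{\dim G}$; this is correct and self-contained from the group-theoretic side. The paper instead cites a point-counting result of Serre (\cite[Theorem~2]{MR656627}) stating that $\#\overline{G}(n)\,\ell^{-n\dim G}$ converges to a positive constant, from which the value of the stable $\#V_n$ follows in one line. The paper's route is shorter and avoids any Lie-algebra bookkeeping; your route makes the structural reason (the graded pieces are the reduction of the Lie lattice) more explicit, at the cost of importing heavier machinery. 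Either is fine here.
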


\begin{proof}
We have to study the order of $\mathrm{Ker}(n)$, the kernel of the reduction map $\algZ(n+1) \to \algZ(n)$. 
For every $n$, the map $M \mapsto \ell^{-n}(M-I)$ gives a group isomorphism between $\mathrm{Ker}(n)$ and some vector subspace $V_n\subseteq \operatorname{Mat}_b(\Z/\ell\Z)$. 
The sequence $\#V_n$ is bounded from above by $\#\operatorname{Mat}_b(\Z/\ell\Z)$, and  it is non-decreasing: indeed, we now show that $V_n \subseteq V_{n+1}$. If $v\in V_n$, then $I+\ell^{n} v\in \mathrm{Ker}(n) \subseteq \algZ(n+1)$, so there is some $\tilde{M}:=I+\ell^{n} \tilde{v}$ in $\algZ$ that is congruent to $1+\ell^n v$ modulo $\ell^{n+1}$. We have $v\in V_{n+1}$ because $\mathrm{Ker}(n+1)$ contains $I+\ell^{n+1} v$: indeed, this is the image in $\algZ(n+2)$ of $\tilde{M}^\ell$ (this follows from $\ell^{n+2}\mid \ell^{2n}$). This proves that $\#V_n$ is eventually constant, and since the sequence $\#\algZ(n) \ell^{-n\dim \algQ}$ converges to some positive number by \cite[Theorem 2]{MR656627} we must have $\#V_n=\ell^{\dim \algQ}$ for all $n$ sufficiently large.
\end{proof}

\begin{lem}\label{Gisp}
Semiabelian varieties satisfy \eqref{C1} for any prime $\ell$.
\end{lem}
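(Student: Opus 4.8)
The plan is to derive the statement from Lemma~\ref{matrici}. Let $\ZarGalZ\subseteq\GL_{b,\Q_\ell}$ be the Zariski closure of $\imGal$, so that $\dim\imGal=\dim\ZarGalZ$ by definition; set $\algZ:=\ZarGalZ(\Q_\ell)\cap\GL_b(\Z_\ell)$ and write $\algZ(n)$ for its reduction modulo $\ell^n$. Applying Lemma~\ref{matrici} to the algebraic group $\ZarGalZ$ yields $\#\algZ(n+1)/\#\algZ(n)=\ell^{\dim\imGal}$ for all large $n$. Since $\imGal\subseteq\algZ$, the lemma will follow once we know that $\imGal$ is an open (equivalently, finite-index) subgroup of $\algZ$; call this condition $(\ast)$. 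Indeed, granting $(\ast)$, choose $n_1$ with $\ker(\algZ\to\algZ(n_1))\subseteq\imGal$ (possible because the kernels of the reduction maps form a neighbourhood basis of the identity in $\algZ$); then $\ker(\algZ\to\algZ(n))\subseteq\imGal$ for every $n\geqslant n_1$, so $\imGal(n)=\imGal/\ker(\algZ\to\algZ(n))$ and $[\algZ(n):\imGal(n)]=[\algZ:\imGal]$ is independent of $n$. Hence $\#\imGal(n+1)/\#\imGal(n)=\#\algZ(n+1)/\#\algZ(n)=\ell^{\dim\imGal}$ for all large $n$, which is \eqref{C1}.

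It remains to prove $(\ast)$, or equivalently that the $\ell$-adic Lie algebra of $\imGal$ is algebraic (this is the assertion that $\imGal$ and $\algZ$ have the same dimension as $\ell$-adic Lie groups; the inequality $\leqslant$ is automatic). This property is insensitive to replacing $K$ by a finite extension, so we may assume $\ZarGalZ$ is connected. For a torus $T$ it is elementary: over a splitting field the $\ell$-adic representation is $\sigma\mapsto\chi_{\mathrm{cyc}}(\sigma)\cdot I$, whose image is an open subgroup of $\Z_\ell^\times$ sitting inside $\GL_b(\Z_\ell)$ as the $\Z_\ell$-points of the scalar $\G_m$. For an abelian variety $B$ it is the theorem of Bogomolov that the $\ell$-adic Lie algebra is algebraic, which rests on Faltings's semisimplicity of the Tate module. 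For a general semiabelian variety write $0\to T\to A\to B\to 0$ with $T$ a torus and $B$ an abelian variety; this gives an exact sequence of Galois modules $0\to T_\ell T\to T_\ell A\to T_\ell B\to 0$. Let $U\subseteq\imGal$ be the subgroup acting trivially on both $T_\ell T$ and $T_\ell B$: it is a closed subgroup of the additive group $\Hom(T_\ell B,T_\ell T)$, hence a lattice spanning a $\Q_\ell$-subspace, and therefore — identifying $\Hom(T_\ell B,T_\ell T)$ with the unipotent group $I+\Hom(T_\ell B,T_\ell T)$ — it is open in the $\Z_\ell$-points of its unipotent Zariski closure $\overline U$. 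The quotient $\imGal/U$ is the Galois image on the semisimple module $T_\ell T\oplus T_\ell B$, whose two summands share no Galois constituent because the corresponding Frobenius eigenvalues have different complex absolute values; the algebraicity of its $\ell$-adic Lie algebra follows from the torus and abelian-variety cases, the only relation linking the two factors being the one imposed by the cyclotomic similitude character, which is an algebraic condition. Writing $\mathbf L$ for the (reductive) Zariski closure of $\imGal/U$, the Zariski closure $\ZarGalZ$ is an extension of $\mathbf L$ by the connected unipotent group $\mathbf N:=\ZarGalZ\cap\bigl(I+\Hom(T_\ell B,T_\ell T)\bigr)$, which is therefore $R_u(\ZarGalZ)$; comparing dimensions, $(\ast)$ holds if and only if $\mathbf N=\overline U$.

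I expect the two genuine obstacles to be: the abelian-variety input — Bogomolov's algebraicity theorem, hence ultimately Faltings — and the final identification $\mathbf N=\overline U$. The latter says that $\ZarGalZ$ acquires no unipotent directions beyond those already carried by the Kummer cocycle $U$; this cannot be obtained by soft dimension-counting and must be extracted from the explicit description of $U$ provided by Kummer theory for semiabelian varieties, in the spirit of \cite[Theorem~2]{Bertrand}.
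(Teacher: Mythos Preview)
Your first paragraph is correct and is exactly the paper's argument: reduce to Lemma~\ref{matrici} once one knows that $\imGal$ is open in the $\Z_\ell$-points of its Zariski closure, and deduce that the two kernels $\ker(\imGal(n+1)\to\imGal(n))$ and $\ker(\algZ(n+1)\to\algZ(n))$ agree for large $n$.

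The difference is in how $(\ast)$ is established. The paper does not attempt your structural decomposition; instead it proves $(\ast)$ as a separate result (Proposition~\ref{prop:GeneralizedBogomolov}) by running Bogomolov's original abelian-variety argument for semiabelian varieties: one checks that the $\ell$-adic representation is (1) unramified almost everywhere, (2) of Hodge--Tate type at the places above $\ell$ (via Faltings plus the existence of good compactifications), and (3) that Frobenius elements act semisimply on $V_\ell A$ (because the torus and abelian-variety pieces have disjoint Frobenius eigenvalues, exactly the observation you use elsewhere). These three inputs feed directly into Serre's criterion \cite[Lemma~1 and Corollary to Lemma~2]{MR587337}, yielding algebraicity of the Lie algebra with no reference to the unipotent radical.

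Your route, by contrast, leaves a genuine gap at $\mathbf N=\overline U$. You are right that this is the crux: it is precisely the statement that the Zariski closure does not grow unipotent directions beyond those already present in $\imGal$, i.e.\ algebraicity of the nilpotent part of the Lie algebra. Your invocation of Bertrand is on the right track in spirit---the extension class of $0\to T\to A\to B\to 0$ is given by points on $B^\vee$ when $T$ is split, and $U$ is governed by their Kummer theory---but turning this into an identification of $\mathbf N$ with $\overline U$ requires knowing in advance what $\mathbf N$ is, which amounts to computing the unipotent radical of the Zariski closure independently. That is not circular, but it is substantially more work than the paper's approach and is not completed in your sketch. The Hodge--Tate route bypasses the issue entirely.
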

\begin{proof}
Let $\ZarGal$ be the Zariski closure of $\imGal$ in $\GL_{b, \mathbb{Q}_\ell}$ and define $\ZarGalZ:=\ZarGal (\Q_\ell) \cap \GL_b(\Z_\ell)$.
By Proposition \ref{prop:GeneralizedBogomolov} we know that $\imGal$ is open in $\ZarGalZ$, so there exists some positive integer $n_0$ such that for every $n\geqslant n_0$ the matrices of $\ZarGalZ$ that reduce to the identity modulo $\ell^{n}$ are in $\imGal$. The statement then follows from Lemma \ref{matrici}, because for every sufficiently large $n$ we have $\ker\left(\imGal(n+1) \to \imGal(n) \right)=\ker\left( \ZarGalZ(n+1) \to \ZarGalZ(n) \right)$.
\end{proof}

The proof of Lemma  \ref{matrici} implies that the following definition is well-posed:
\begin{defi}\label{def:TgSpace}
Let $\open$ be a subgroup of $\operatorname{GL}_b(\mathbb{Z}_\ell)$ that is open (for the $\ell$-adic topology) in its Zariski closure. The image of the map
\[
\begin{array}{ccc}
\ker(\open(n+1) \to \open(n)) & \to & \operatorname{Mat}_{b}(\mathbb Z/\ell\Z) \\
M & \mapsto & \ell^{-n}(M-I)\end{array}
\]
is independent of $n$ for all sufficiently large $n$: it is a vector space of the same dimension as the Zariski closure of $\open$, and we call it the \textit{tangent space} $\T$ of $\open$.
\end{defi}

\subsection{Effectivity of Definition \ref{defi-conditions}}

Consider the arboreal representation $\omega$ as in \eqref{eq:ArborealRepresentation} and its reduction
$$\omega_{n} : \Gal(K_{\alpha}/K) \to A[\ell^{n}] \rtimes \Aut(A[\ell^{n}])\,.$$

\begin{thm}\label{thm:IfXThenLevel}
Let $n \geqslant 1$ (resp.~$n \geqslant 2$ if $\ell=2$).
\begin{enumerate}
\item[(i)] If $[K_{n+1}:K_{n}]=\tgsp$ holds, we have $[K_{m+1} : K_{m}]=\tgsp$ for all $m \geqslant n$.
\item[(ii)] If $[K_{\alpha,n+1}:K_{\alpha,n}]=\tgsp\ell^b$ holds, we have $[K_{\alpha,m+1} : K_{\alpha,m}]=\tgsp\ell^b$ for all $m \geqslant n$ and the image of $\omega$ is the inverse image in $T_\ell A \rtimes \Aut(T_\ell(A))$ of the image of $\omega_{n}$.
\end{enumerate}
\end{thm}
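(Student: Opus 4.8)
The plan is to reduce both parts to the structure results of Lemma \ref{matrici} and Definition \ref{def:TgSpace}, using the arboreal representation to package the torsion and Kummer data together. For part (i), the hypothesis $[K_{n+1}:K_n]=\tgsp$ says that the kernel of $\imGal(n+1)\to\imGal(n)$ has order $\#\T$, where $\T$ is the tangent space of $\imGal$ from Definition \ref{def:TgSpace} (note $\imGal$ is open in its Zariski closure by Proposition \ref{prop:GeneralizedBogomolov}, so $\T$ is defined). By Lemma \ref{matrici} the sequence $\#\imGal(m+1)/\#\imGal(m)$ is non-decreasing for $m\geqslant 2$ and eventually equals $\ell^{\dim\imGal}=\#\T$; since it already attains this value at level $m=n$, it must equal $\#\T$ for all $m\geqslant n$. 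This immediately gives $[K_{m+1}:K_m]=\tgsp$ for all $m\geqslant n$.

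For part (ii), I would first observe that the analogous monotonicity holds for the \emph{arboreal} group $G_\alpha:=\omega(\Gal(K_\alpha/K))\subseteq T_\ell A\rtimes\Aut(T_\ell A)$. Concretely, write $G_\alpha(m)$ for the image of $\omega_m$; then $[K_{\alpha,m+1}:K_{\alpha,m}]=\#\ker(G_\alpha(m+1)\to G_\alpha(m))$, and I claim this kernel-order sequence is itself eventually constant and non-decreasing, so that attaining the value $\tgsp\ell^b$ at level $n$ forces it for all $m\geqslant n$. To see the monotonicity I would run an argument parallel to the proof of Lemma \ref{matrici}: an element of $\ker(G_\alpha(m+1)\to G_\alpha(m))$ is a pair $(\ell^m s, I+\ell^m v)$ with $v\in\T$ and $s$ in some subspace $W_m$ of $(\Z/\ell\Z)^b$; lifting such an element to $\tilde g\in G_\alpha$ and taking $\ell$-th powers (using $\ell^{2m}\equiv 0$ modulo $\ell^{m+1}$ in the relevant places, exactly as before) shows $W_m\subseteq W_{m+1}$, while a counting argument via \eqref{refomulation-Defi} — or directly via the eventual constancy built into Definition \ref{defi-conditions} and Lemma \ref{matrici} — pins down the stable value. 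Thus the hypothesis at level $n$ propagates to all $m\geqslant n$, which is the first assertion of (ii).

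It remains to identify $\im(\omega)$ with the full preimage of $\im(\omega_n)=G_\alpha(n)$. Having shown $\#\ker(G_\alpha(m+1)\to G_\alpha(m))=\tgsp\ell^b$ for all $m\geqslant n$, and noting that the preimage $P$ of $G_\alpha(n)$ in $T_\ell A\rtimes\Aut(T_\ell A)$ has $\#\ker(P(m+1)\to P(m))$ at most $\tgsp\ell^b$ (its tangent directions live inside $\T\times(\Z/\ell\Z)^b$, which has exactly that order once $\T$ is the tangent space of $\imGal$), a level-by-level comparison gives $\#G_\alpha(m)=\#P(m)$ for all $m\geqslant n$; since $G_\alpha(m)\subseteq P(m)$ always, we get equality at every finite level and hence $\im(\omega)=P$ after passing to the inverse limit. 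I expect the main obstacle to be the careful bookkeeping in the arboreal analogue of Lemma \ref{matrici}: one must check that the $\ell$-th-power trick still lands inside the kernel at the next level when the group element carries a nontrivial translation part $\ell^m s$, i.e.\ that the extra terms produced by the semidirect-product multiplication (which involve $M_\sigma$ acting on translation vectors) are absorbed modulo the appropriate power of $\ell$. Once that compatibility is verified, everything else is the monotone-sequence-that-has-already-stabilized argument.
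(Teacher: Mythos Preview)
Your approach for (i) and for the first assertion of (ii) is essentially the paper's $\ell$-th-power trick, repackaged as a monotonicity statement for kernel sizes. The paper runs the same induction but splits the arboreal step into two pieces (showing separately that $A[\ell]\times\{I\}$ and $\{0\}\times H_{m+1}$ lie in $G_\alpha(m+2)$), whereas you lift the whole arboreal element at once; both routes work. Two small repairs are needed, though. First, Lemma \ref{matrici} is stated only for the $\mathbb Z_\ell$-points of an algebraic group and only for $n\geqslant 2$, so you should re-run its proof directly for $\imGal$ (this also recovers the range $n\geqslant 1$ for odd $\ell$). Second, you cannot invoke Definition \ref{defi-conditions} or \eqref{refomulation-Defi} to pin down the stable value of the kernel sizes, since those conditions are exactly what the theorem is meant to establish; the correct upper bound is simply $\ker\bigl(G_\alpha(m+1)\to G_\alpha(m)\bigr)\subseteq A[\ell]\times V_m\subseteq A[\ell]\times\T$, where $V_m$ is the level-$m$ tangent space of $\imGal$.

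Your argument for the inverse-image assertion has a real gap. If $P$ is the preimage of $G_\alpha(n)$ in $T_\ell A\rtimes\Aut(T_\ell A)$, then for $m\geqslant n$ the kernel of $P(m+1)\to P(m)$ is all of $A[\ell]\times\{I+\ell^m V:V\in\operatorname{Mat}_b(\mathbb{Z}/\ell\mathbb{Z})\}$, of order $\ell^{b+b^2}$: on the $\Aut$ side the tangent directions are the full matrix space, not $\T$, because $P$ imposes no constraint on $M$ above level $n$. Hence your cardinality comparison $\#G_\alpha(m)=\#P(m)$ fails whenever $\tgsp<\ell^{b^2}$ (e.g.\ for any CM elliptic curve). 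What the first part of your argument actually yields, once the upper bound is fixed as above, is that $\ker\bigl(G_\alpha(m+1)\to G_\alpha(m)\bigr)=A[\ell]\times\T$ for all $m\geqslant n$, and hence $G_\alpha(m+1)$ is the inverse image of $G_\alpha(m)$ inside $A[\ell^{m+1}]\rtimes\imGal(m+1)$. This is precisely what the paper's induction establishes before passing to the limit; note that the theorem's wording ``inverse image in $T_\ell A\rtimes\Aut(T_\ell A)$'' is then literally accurate only when $\tgsp=\ell^{b^2}$, which is indeed the situation in the applications (Sections \ref{sub-exa1}--\ref{sub-exa2}).
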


\begin{proof}
(i) Define $H_{m}:= \ker(\imGal(m+1) \to \imGal(m))$. We know $\#H_{n}=\tgsp$ and by induction we prove $\#H_{m}=\tgsp$ for $m\geqslant n$. Write the elements of $H_{m}$ as $I+\ell^{m}x$, where $x$ varies in a subset of $\mathrm{Mat}(\Z/\ell\Z)$ of cardinality $\tgsp$. To prove $\#H_{m+1}=\tgsp$, we show that $I+\ell^{m+1}x$ is in $\imGal(m+2)$: this group contains $I+\ell^{m}x'$, where $x'$ is some lift of $x$ to $\mathrm{Mat}(\Z/\ell^2\Z)$, and hence also $(I+\ell^{m}x')^\ell=I+\ell^{m+1}x$.

(ii) The kernel of the projection $\Gal(K_{\alpha,m+1}/K) \to \Gal(K_{\alpha,m}/K)$ is $\Gal(K_{\alpha,m+1}/K) \cap H'_m$, where we set $H'_m:=A[\ell]\rtimes H_m$. From (i) we know that $\#H'_{m}= \tgsp\ell^b$, so it suffices to prove $H'_{m}\subseteq \Gal(K_{\alpha,m+1}/K)$. We know this assertion for $m=n$, so we suppose that it holds for some $m\geqslant n$ and prove it for $m+1$.
Since $H'_{m+1}$ is generated by $A[\ell] \times \{I\}$ and by $\{0\} \times H_{m+1}$, it suffices to prove that these are contained in $\Gal(K_{\alpha,m+2}/K)$.
For $t \in A[\ell]$, we have $(t,I)\in H'_m\subseteq \Gal(K_{\alpha,m+1}/K)$, so there is $(u,M)\in \Gal(K_{\alpha,m+2}/K)$ satisfying $[\ell] u = t$ and $M \equiv I  \pmod {\ell^{m+1}}$; we have $(u, M)^\ell  = (t, I)$ because more generally $(u,M)^{k}=([k]u,M^k)$ holds by induction for $k \geqslant 1$.\\
Writing again an element of $H_{m+1}$ as $h=I+\ell^{m+1}x$, we know that $(0,I+\ell^{m}x)$ is in $\{0\} \times H_{m}\subseteq\Gal(K_{\alpha,m+1}/K)$ so we deduce as above that $\Gal(K_{\alpha,m+2})$ contains $(t,h)$ for some $t\in A[\ell]$ and we conclude because $(0,h)=(-t, I)(t,h)$.

For every $m\geqslant n$ we have proven that $\im(\omega_{m+1})$ is the inverse image of $\im(\omega_{m})$ in $A[\ell^{m+1}] \rtimes \Aut(A[\ell^{m+1}])$, so we conclude by taking the limit in $m$.
\end{proof}

\begin{rem}\label{rem:n0ComputableForEllipticCurves}
Let $(A/K,\ell,\alpha)$ be as in Definition \ref{defi-conditions}. 
Provided that $\dim \imGal$ is known, by Theorem \ref{thm:IfXThenLevel} we may take for $n_0$ the smallest integer $n\geqslant 1$ ($n\geqslant 2$ for $\ell=2$) satisfying $[K_{n+1}(\ell^{-(n+1)}\alpha):K_{n}(\ell^{-n}\alpha)]=\ell^{b+\dim \imGal}$.
Recall that the problem of determining the Galois group of a number field can be effectively solved, and that the fields $K_{m}(\ell^{-m}\alpha)$ are generated over $K$ by the roots of some explicit division polynomials, thus the above condition can be effectively tested.
If $A$ is an elliptic curve, $\dim \imGal$ is either $2$ or $4$ (see Section \ref{torsionkummer}) and one can algorithmically decide which case applies \cite{MR2181871}, so for elliptic curves the parameter $n_0$ is effectively computable.
\end{rem}

\subsection{Auxiliary results}

\begin{lem}[{\cite[Lemma 4.4]{MR1981599}}]\label{lem:UnramifiedAlmostEverywhere}
Let $A$ be a connected commutative algebraic group defined over a number field $K$. For any prime $\ell$, the $\operatorname{Gal}(\overline{K}/K)$-representation afforded by $T_\ell(A)$ is unramified almost everywhere.
\end{lem}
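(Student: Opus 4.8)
The plan is to reduce the statement to the case of a semiabelian variety and then exploit the fact that such a group has a smooth model over the ring of $S$-integers for a suitable finite set $S$. First I would invoke Chevalley's structure theorem: $A$ fits into an exact sequence $0\to L\to A\to B\to 0$ with $B$ an abelian variety and $L$ a connected commutative linear algebraic group, and since $\chara K=0$ we may write $L\cong T\times U$ with $T$ a torus and $U$ unipotent. As $U(\overline K)$ is a $\Q$-vector space, multiplication by $\ell^n$ is bijective on it, so $U[\ell^n]=0$ and $U(\overline K)/\ell^n U(\overline K)=0$; applying the snake lemma to $0\to U\to A\to A/U\to 0$ and the map $[\ell^n]$ yields a $\Gal(\overline K/K)$-equivariant isomorphism $A[\ell^n]\xrightarrow{\ \sim\ }(A/U)[\ell^n]$ compatible with the transition maps, hence $T_\ell A\cong T_\ell(A/U)$ as Galois modules. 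Since $A/U$ is semiabelian, we may assume from now on that we have $0\to T\to A\to B\to 0$ with $T$ a torus and $B$ an abelian variety.

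Next I would produce a good model. The torus $T$ splits over a finite extension of $K$ and so extends to a torus scheme $\mathcal T$ over $\mathcal O_{K,S}$ once $S$ is large enough; the abelian variety $B$ has an abelian scheme model $\mathcal B$ over $\mathcal O_{K,S}$ after enlarging $S$; and the extension class of $A$ in $\operatorname{Ext}^1_K(B,T)$ spreads out, after a further enlargement of $S$, to a class in $\operatorname{Ext}^1_{\mathcal O_{K,S}}(\mathcal B,\mathcal T)$, defining a semiabelian scheme $\mathcal A/\mathcal O_{K,S}$ with generic fibre $A$. Enlarging $S$ once more we may assume it contains every place above $\ell$. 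Now fix a finite place $\mathfrak p\notin S$: its residue characteristic differs from $\ell$, so $[\ell^n]\colon\mathcal A\to\mathcal A$ is étale (its derivative is multiplication by the unit $\ell^n$ on the locally free module $\operatorname{Lie}\mathcal A$), its kernel $\mathcal A[\ell^n]$ is quasi-finite and étale over $\mathcal O_{K,S}$, and being an extension of the finite flat group scheme $\mathcal B[\ell^n]$ by the finite flat group scheme $\mathcal T[\ell^n]$ it is finite. Thus $\mathcal A[\ell^n]$ is finite étale over $\mathcal O_{K,S}$, so $\Gal(\overline K/K)$ acts on $A[\ell^n]=\mathcal A[\ell^n](\overline K)$ through the étale fundamental group of $\Spec\mathcal O_{K,S}$; equivalently, the inertia group at every $\mathfrak p\notin S$ acts trivially on $A[\ell^n]$. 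Passing to the inverse limit over $n$ shows that inertia at every $\mathfrak p\notin S$ acts trivially on $T_\ell A$, which is the assertion.

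The only non-formal ingredient, and the main obstacle, is the construction of the model $\mathcal A$ in the second step — equivalently, the fact that a semiabelian variety over a number field has good reduction outside a finite set of primes. For abelian varieties this is classical, and the conclusion can then alternatively be deduced from the N\'eron--Ogg--Shafarevich criterion; the torus case is elementary; and the general semiabelian case is obtained by combining the two, the one point requiring attention being the extension class, for which one uses that $\varinjlim_S\operatorname{Ext}^1_{\mathcal O_{K,S}}(\mathcal B,\mathcal T)\to\operatorname{Ext}^1_K(B,T)$ is surjective. If one wished to minimise scheme-theoretic input one could instead work with $0\to T_\ell T\to T_\ell A\to T_\ell B\to 0$ and reduce, via the torus and abelian-variety cases, to showing that the image of tame inertia at $\mathfrak p\notin S$ inside the torsion-free pro-$\ell$ group $1+\Hom_{\Z_\ell}(T_\ell B,T_\ell T)$ vanishes; but this vanishing is again equivalent to good reduction at $\mathfrak p$, so there is no genuine shortcut.
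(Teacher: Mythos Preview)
The paper does not prove this lemma at all: it is stated with a citation to \cite[Lemma 4.4]{MR1981599} and used as a black box. Your argument supplies a complete self-contained proof where the paper gives none, and it is correct. The reduction to the semiabelian case via Chevalley's theorem together with $T_\ell U=0$ for unipotent $U$ is the right first move, and the spreading-out of the semiabelian variety to a semiabelian scheme over $\mathcal O_{K,S}$ followed by the finite-\'etale argument for $\mathcal A[\ell^n]$ is the standard and efficient route. Your identification of the construction of $\mathcal A$ (equivalently, the surjectivity of $\varinjlim_S\operatorname{Ext}^1_{\mathcal O_{K,S}}(\mathcal B,\mathcal T)\to\operatorname{Ext}^1_K(B,T)$) as the only substantive step is accurate, as is your remark that the cohomological shortcut via $0\to T_\ell T\to T_\ell A\to T_\ell B\to 0$ runs into the same issue, since the pro-$\ell$ part of tame inertia could in principle act nontrivially on the extension class.
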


\begin{prop}\label{finiteness-Av}
If $A/K$ is a connected commutative algebraic group defined over a number field, the torsion subgroup of $A(K_\mathfrak p)$ is finite for every prime $\mathfrak p$ of $K$ (where $K_\mathfrak p$ denotes the completion of $K$ at $\mathfrak{p}$).
\end{prop}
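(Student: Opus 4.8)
The plan is to decompose $A$ into its standard building blocks and treat each separately. Note first that $K_{\mathfrak p}$ is a finite extension of $\mathbb{Q}_p$, where $p$ is the residue characteristic of $\mathfrak p$; in particular $K_{\mathfrak p}$ is a perfect field of characteristic $0$. By Chevalley's structure theorem, $A$ fits over $K_{\mathfrak p}$ into an exact sequence
\[
0 \to L \to A \to B \to 0,
\]
with $B$ an abelian variety and $L$ a connected commutative linear algebraic group, and since $\operatorname{char} K_{\mathfrak p}=0$ the group $L$ is isomorphic over $K_{\mathfrak p}$ to $\mathbb{G}_a^m \times T$ for some $m \geqslant 0$ and some torus $T$ (the usual decomposition of a commutative linear group into its unipotent part and its maximal torus, which is defined over the base field because it is perfect).

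I would then bound the torsion of the $K_{\mathfrak p}$-points of each factor. First, $\mathbb{G}_a^m(K_{\mathfrak p}) = K_{\mathfrak p}^m$ is torsion-free because $\operatorname{char} K_{\mathfrak p}=0$. Second, fixing a finite extension $L'/K_{\mathfrak p}$ that splits $T$ yields an embedding $T(K_{\mathfrak p}) \hookrightarrow T(L') \cong ((L')^{\times})^{\dim T}$, so that $T(K_{\mathfrak p})_{\mathrm{tors}}$ embeds into $\mu(L')^{\dim T}$, which is finite since the local field $L'$ contains only finitely many roots of unity. Third, $B(K_{\mathfrak p})$ is compact because $B$ is proper over $K_{\mathfrak p}$, and a compact $p$-adic analytic group admits a torsion-free open subgroup of finite index (for instance via the formal group of $B$ and the $p$-adic logarithm, or by Mattuck's theorem), so that $B(K_{\mathfrak p})_{\mathrm{tors}}$ is finite.

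Finally, taking $K_{\mathfrak p}$-points is left exact, so $0 \to L(K_{\mathfrak p}) \to A(K_{\mathfrak p}) \to B(K_{\mathfrak p})$ is exact; since torsion elements map to torsion elements and $A(K_{\mathfrak p})_{\mathrm{tors}} \cap L(K_{\mathfrak p}) = L(K_{\mathfrak p})_{\mathrm{tors}} = T(K_{\mathfrak p})_{\mathrm{tors}}$, the group $A(K_{\mathfrak p})_{\mathrm{tors}}$ sits in an exact sequence $0 \to T(K_{\mathfrak p})_{\mathrm{tors}} \to A(K_{\mathfrak p})_{\mathrm{tors}} \to B(K_{\mathfrak p})_{\mathrm{tors}}$, hence is finite, being squeezed between two finite groups. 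The only step that is not purely formal is the finiteness of $B(K_{\mathfrak p})_{\mathrm{tors}}$, which genuinely uses $p$-adic analytic input rather than algebra alone; I expect no real difficulty there since it is classical, but it is the substantive point of the argument.
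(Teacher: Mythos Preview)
Your proof is correct and follows essentially the same route as the paper's: Chevalley decomposition over $K_{\mathfrak p}$, Mattuck's theorem for the abelian variety quotient, and the decomposition of the linear part into a unipotent piece (torsion-free in characteristic $0$) and a torus (finite torsion over a local field). The only cosmetic difference is that you write $L \cong \mathbb{G}_a^m \times T$ as a direct product and spell out the torus argument via roots of unity in a splitting field, whereas the paper phrases the linear step as ``normal unipotent subgroup with torus quotient'' and invokes Lie--Kolchin; the content is the same.
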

\begin{proof}
Consider $A/K_\mathfrak{p}$ and its Chevalley decomposition $1 \to A_1 \to A \to A_2 {\to} 1$, where $A_1$ is a connected commutative linear algebraic group and $A_2$ is an abelian variety. Since $K_\mathfrak{p}$ is a $p$-adic field, the torsion subgroup of $A_2(K_\mathfrak p)$ is finite by a classical theorem of Mattuck \cite{Mattuck}, so it suffices to show that the intersection between $\ker(A(K_\mathfrak{p}) \to A_2(K_\mathfrak{p}))$ and the torsion subgroup of $A(K_\mathfrak p)$ is finite: we are thus reduced to proving that the torsion subgroup of $A_1(K_\mathfrak{p})$ is finite.

The solvable group $A_1$ has a normal unipotent subgroup such that the quotient is of multiplicative type (and connected, hence a torus): by the same argument as above, it suffices to treat the case of unipotent groups and of tori separately. By the Lie-Kolchin Theorem (and since we are in characteristic zero) the unipotent subgroup is torsion-free, while the assertion is clear for tori. \end{proof}

\begin{prop}\label{prop:GeneralizedBogomolov}
For a semiabelian variety defined over a number field, the image of the $\ell$-adic representation is open in its Zariski closure.\end{prop}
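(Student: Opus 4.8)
The plan is to deduce the statement from three inputs --- the theorem of Bogomolov for abelian varieties, the elementary case of tori, and a Kummer-theoretic result for abelian varieties --- glued together by a d\'evissage along the weight filtration on the Tate module. Let $1 \to T \to A \to B \to 0$ be the canonical exact sequence with $T$ a torus and $B$ an abelian variety. Passing to a finite extension of $K$ replaces $\imGal$ by a subgroup of finite index and leaves the conclusion unchanged (the Zariski closures over $K$ and over the extension have the same identity component), so I may assume that $T \cong \mathbb{G}_m^d$ is split; then, by the Barsotti--Weil formula, $A$ is classified by a point $P \in \widehat{B}(K)^d$, with $\widehat{B}$ the dual abelian variety. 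The induced exact sequence $0 \to T_\ell(\mathbb{G}_m)^{\oplus d} \to T_\ell A \to T_\ell B \to 0$ places $\imGal$ inside the parabolic subgroup of $\GL_{b,\mathbb{Q}_\ell}$ stabilising the first factor: on the graded pieces an element $\sigma$ acts through the cyclotomic character $\chi$ (scalarly on each of the $d$ copies) and through the $\ell$-adic representation $\rho_B$ of $B$, while its off-diagonal block is a cocycle $c$ valued in the unipotent radical $U \cong \Hom(T_\ell B, T_\ell\mathbb{G}_m)^{\oplus d} \cong (T_\ell\widehat{B})^{\oplus d}$.

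Writing $G$ for the Zariski closure of $\imGal$ and $R_u(G) = G \cap U$ for its unipotent radical, it suffices to show: (i) the image of $\imGal$ in $\GL(T_\ell T \otimes \mathbb{Q}_\ell) \times \GL(T_\ell B \otimes \mathbb{Q}_\ell)$ is open in its Zariski closure; and (ii) $\imGal \cap U(\mathbb{Q}_\ell)$ is open in $R_u(G)(\mathbb{Q}_\ell)$. Indeed, comparing dimensions of $\ell$-adic Lie algebras along the Levi decomposition $\mathfrak{p} = \operatorname{Lie}(U) \oplus \mathfrak{l}$, properties (i) and (ii) force $\dim\operatorname{Lie}(\imGal) = \dim\operatorname{Lie}(G)$, which is precisely the openness of $\imGal$ in $G(\mathbb{Q}_\ell)$.

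For (i), the image in question depends only on the Galois action on the two Tate modules, hence coincides with the $\ell$-adic image attached to the semiabelian variety $T \times B$, which has reductive Zariski closure by Faltings' semisimplicity. Its projection to $\GL(T_\ell T \otimes \mathbb{Q}_\ell)$ is the homothety group $\chi(\Gal(\overline{K}/K)) \cong \mathbb{Z}_\ell^\times$, open in $\mathbb{G}_m$, and its projection to $\GL(T_\ell B \otimes \mathbb{Q}_\ell)$ is open in its Zariski closure by Bogomolov's theorem. Since the two are linked by the relation $\det\rho_B = \chi^{\dim B}$ (and $\chi$ is trivial on $\ker\rho_B$ by the Weil pairing), Goursat's lemma identifies the Zariski closure of the joint image with a finite cover of the Zariski closure of the $B$-part and matches the joint image with $\rho_B(\Gal(\overline{K}/K))$, which is open; this gives (i). For (ii), restricting $c$ to $\Gal(\overline{K}/K(B[\ell^\infty]))$ --- where, after a further finite extension, the action on $U$ is trivial since $\widehat{B}$ is $K$-isogenous to $B$ --- turns $c$ into the homomorphism $\Gal(\overline{K}/K(B[\ell^\infty])) \to (T_\ell\widehat{B})^{\oplus d}$ which is exactly the Kummer map attached to $P$; thus $\imGal \cap U(\mathbb{Q}_\ell)$ is the image of that Kummer map, and the required openness in $R_u(G)$ is the assertion that the Kummer representation attached to a point on an abelian variety has image of finite index in the Tate module of the abelian subvariety generated by the point --- a statement available in the literature, see \cite{Bertrand}.

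The hard part is (ii): the unipotent direction is the genuinely arithmetic core of the statement, and one needs the full force of the Kummer-theoretic input not only for the openness of the Kummer image but also for the identification of $R_u(G)$ with the Tate module of the abelian subvariety generated by $P$ (a priori $R_u(G) = G \cap U$ could be strictly larger than the Zariski closure of the Kummer image, and ruling this out is exactly what the cited result accomplishes). Everything else --- part (i) and the d\'evissage --- is formal once Bogomolov's theorem is granted, the only delicate bookkeeping being the interaction of the unipotent radical with the Levi and the comparison of the several Zariski closures involved.
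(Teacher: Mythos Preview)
Your d\'evissage along the weight filtration is correct and takes a genuinely different route from the paper. The paper argues exactly as in Bogomolov's original proof for abelian varieties: it applies Serre's criterion for algebraicity of the Lie algebra directly to $T_\ell A$, verifying that the representation is unramified almost everywhere, of Hodge--Tate type at primes above $\ell$ (Faltings' theorem, extended to the open/quasi-projective case via good compactifications), and has a Zariski-dense supply of semisimple elements (the Frobenii, whose semisimplicity on $V_\ell A$ follows from that on $V_\ell B$ and $V_\ell T$ together with disjointness of the eigenvalue sets). Your approach instead reduces to Bogomolov for the abelian quotient $B$ together with a Kummer-theoretic statement for the unipotent direction; it trades the $p$-adic Hodge theory for open varieties against Bertrand's theorem, and makes the 1-motive structure (the extension datum $P$) visible. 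Note that in the paper's logical architecture this proposition is independent of Bertrand, which is invoked separately for the growth conditions; your proof makes it depend on Bertrand.

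Two points in your write-up deserve sharpening, though neither is fatal. In (i), only $\chi^{\dim B}$ (not $\chi$ itself) is forced to be trivial on $\ker\rho_B$ by the Weil pairing; but the kernel of the second projection from the joint image to $\rho_B(\Gal_K)$ then lands in $\mu_{\dim B}$, which is finite, and the dimension count (the Zariski closure of the joint image surjects onto $\overline{\rho_B(\Gal_K)}$ with finite fibres) still transfers openness from Bogomolov. In (ii), the identification $R_u(G)=V_\ell A'$ (with $A'\subseteq\widehat{B}^d$ the abelian subvariety generated by $P$) needs both inclusions spelled out: once a multiple of $P$ lies in $A'(K)$ one may choose the $\ell$-power division points inside $A'$, so the cocycle $c$ takes values in $T_\ell A'$ and hence $G\subseteq V_\ell A'\rtimes\overline{L}$, giving $R_u(G)\subseteq V_\ell A'$; the reverse inclusion is the openness of the Kummer image in $T_\ell A'$, which is the genuine content of the cited result. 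You flag the subtlety correctly, but the first inclusion is really an elementary observation rather than a consequence of Bertrand.
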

\begin{proof}
We mimic the argument for abelian varieties \cite{MR587337, MR574307}. As explained in \cite[Proof of Lemma 1]{MR587337}, it suffices to show that the Lie algebra of the $\ell$-adic representation $\rho_\ell$ is algebraic. By \cite[Lemma 1 and Corollary to Lemma 2]{MR587337}, it suffices to prove:
\begin{enumerate}
\item $\rho_\ell$ is unramified outside a finite set of primes of $K$;
\item $\rho_\ell$ is of Hodge-Tate type for all the primes of $K$ lying over $\ell$;
\item elements that act semisimply on $T_\ell(A) \otimes_{\Z_\ell} \mathbb{Q}_\ell$ are dense in the image of $\rho_\ell$.
\end{enumerate} 
Property (1) is true by Lemma \ref{lem:UnramifiedAlmostEverywhere}.
The existence of the Hodge-Tate decomposition was proved by Faltings in \cite{MR924705} for smooth {proper} varieties, and it also holds for semiabelian varieties because of the existence of good compactifications for quasi-projective varieties, see \cite{MR2904571} and \cite[Remark 1.2]{2016arXiv160601921S}. The last property holds because the Frobenius automorphisms act semisimply on the rational Tate module (this follows easily from the analogous statement for abelian varieties and for tori because the two sets of Frobenius eigenvalues are disjoint).
\end{proof}

\section{Cohomological interpretation of the density}\label{sect:CohomologyInterpretation}

\subsection{The Kummer cohomology class}\label{sec-Kummerclass}
Let $A$ be a connected commutative algebraic group defined over a number field $K$ and let $\ell$ be a prime number.
If $\alpha\in A(K)$ and $n$ is a positive integer, we denote by $\ell^{-n}(\alpha)$ the set of points $\alpha'\in A(\overline{K})$ satisfying $[\ell^n]\alpha'=\alpha$. We then call $\ell^{-\infty}(\alpha)$ the set consisting of all sequences $\beta:=\{\beta_n\}_{n\geqslant 1}$ satisfying 
$$[\ell]\beta_1=\alpha \qquad  \text{and} \qquad [\ell]\beta_{n+1}=\beta_n\quad \forall n\geqslant 1\,.$$
We have $\ell^{-n}(0)=A[\ell^n]$ and $\ell^{-\infty}(0)=T_\ell A$.
If $\beta, \beta'$ are in $\ell^{-\infty}(\alpha)$ and $\sigma\in \Gal(\overline{K}/K)$, we define $\sigma(\beta):=\{\sigma(\beta_n)\}_{n\geqslant 1}$ and $\beta'- \beta:=\{\beta'_n- \beta_n\}_{n\geqslant 1}$. So for any $\beta \in \ell^{-\infty}(\alpha)$ we get a cocycle
$$
\begin{array}{cccc}
c_\beta: & \Gal(\overline{K}/K)& \rightarrow & T_\ell A\\ & \sigma & \mapsto & \sigma(\beta)-\beta.
\end{array}$$
The induced map from $\Gal(K_\alpha/K)$ agrees with \eqref{Kummer-rep} on $\Gal(K_\alpha/K_\infty)$. A different choice of $\beta$ alters $c_\beta$ by a coboundary, so its class $C_\alpha$ in 
$H^1(\Gal(\overline{K}/K), T_\ell A)$ is well-defined: we call it the \emph{Kummer class} of $\alpha$. We equivalently consider $C_\alpha$ to be in $H^1(\Gal({K_\alpha}/K), T_\ell A)$ and  denote by  $C_{\alpha,n}$ its image in $H^1(\Gal({K_\alpha}/K), A[\ell^n])$, which is obtained by replacing a sequence with its term of index $n$. Notice that $C_{\alpha,n}$ is trivial if and only if there is some point in $\ell^{-n}(\alpha)$ which is defined over $K$.

\subsection{Cohomological conditions}\label{interpretation}
For $\sigma\in \Gal({K_\alpha}/K)$ the restriction map with respect to the profinite cyclic subgroup generated by $\sigma$ is
$$\Res_\sigma:  H^1(\Gal({K}_\alpha/K), T_\ell A)\rightarrow H^1({\langle\sigma\rangle}, T_\ell A)\,.$$
Likewise, for $\tau\in\Gal(K_{\alpha,n}/K)$ the restriction map with respect to the cyclic subgroup generated by $\tau$ is
$$\Res_{\tau}:  H^1(\Gal(K_{\alpha,n}/K), A[\ell^n])\rightarrow H^1({\langle\tau\rangle}, A[\ell^n])\,.$$
Consider the following sets:
$$S_\alpha:=\{\sigma\,:\, C_\alpha\in \ker(\Res_\sigma) \}=\{\sigma\;:\;  \sigma{\beta}=\beta\, \text{for some $\beta\in \ell^{-\infty} \alpha$}\}\subseteq \Gal(K_{\alpha}/K)$$
$$S_{\alpha,n}:=\{\tau\,:\, C_{\alpha,n}\in \ker(\Res_\tau) \}=\{\tau\;:  \tau{\beta_n}=\beta_n\, \text{for some $\beta_n\in \ell^{-n} \alpha$}\}\subseteq \Gal(K_{\alpha,n}/K)\,.$$

Suppose that \eqref{C2i} and \eqref{C2ii} of Definition \ref{defi-conditions} hold and consider $n\geqslant n_0$: if $\tau \in S_{\alpha,n}$ fixes $\beta_n \in \ell^{-n} \alpha$,
then there is $\tau' \in S_{\alpha,n+1}$ over $\tau$ that fixes some $\beta_{n+1} \in \ell^{-(n+1)} \alpha$ satisfying $[\ell]\beta_{n+1}=\beta_n$. We deduce that $S_{\alpha,n}$ is the image of $S_{\alpha,n+1}$ (by passage to the limit, also of $S_{\alpha}$) in $\Gal(K_{\alpha,n}/K)$.
Thus the Haar measure of $S_\alpha$ in $\Gal(K_{\alpha}/K)$ is well-defined and its value is 
$$\mu(S_\alpha)=\lim_{n\rightarrow \infty} \frac{\#S_{\alpha,n}}{\# \Gal({K_{\alpha,n}/K)}}$$
because we take the limit of a non-increasing sequence of positive numbers.

Even though \cite[Theorem 3.2]{JonesRouse} is stated only for products of abelian varieties and tori, the proof works equally well if one just assumes that the triple $(A/K, \ell, \alpha)$ satisfies the conditions of Definition \ref{defi-conditions}.
We then recover Theorem \ref{thm:interpretation} in the form
\begin{equation}\label{eq:interpretation}
\Dens_\ell(\alpha)=\mu(S_\alpha)\,.
\end{equation}

A similar result holds for the density of reductions such that the $\ell$-adic valuation of the order of $(\alpha \bmod \mathfrak p)$ is at most $n$: the cohomological condition becomes $C_\alpha\in \ker([\ell^n] \Res_\sigma)$.

\subsection{Remarks}

\begin{rem}[{\cite[Proof of Theorem 3.8]{JonesRouse}}]\label{ideal}
Writing $\sigma=(t_{\sigma}, M_{\sigma})\in \Gal(K_{\alpha,n}/K)$ as in \eqref{notation-sigma}, we have 
$$\sigma\in S_{\alpha,n}\qquad \Leftrightarrow\qquad t_\sigma\in \im(M_\sigma-I)\,.$$ Indeed, if $t_\sigma=(M_\sigma-I)\gamma$ for some $\gamma\in A[\ell^n]$ then we have $t_\sigma=\sigma(\beta_n)-\beta_n=\sigma(\gamma)-\gamma$ and hence $\beta_n-\gamma$ is a point in $\ell^{-n}(\alpha)$  fixed by $\sigma$. Conversely, if some $\beta'_n\in \ell^{-n}(\alpha)$ is fixed by $\sigma$ then $\beta_n-\beta'_n$ is in $A[\ell^n]$ and its image under $M_\sigma-I$ is $\sigma(\beta_n)-\beta_n=t_\sigma$. The same remark holds for $S_{\alpha}$, hence we have
\begin{equation}\label{bis}
S_\alpha=\{\sigma=(t,M)\in \Gal(K_{\alpha}/K)\;:\; M\in \imGal\text{ and }  t\in \im (M-I)\}\,.
\end{equation}
\end{rem}

\begin{lem}\label{lemma:Haar}\label{Haar2}{\cite[Lemma 18.1.1 and Proposition 18.2.2]{book:71486}}
Let $\profi$ be a profinite group and $\profisub$ a closed normal subgroup of $\profi$. If $\pi$ denotes the natural projection $\profi \to  \profi/\profisub$, then for any $S\subseteq \profi/\profisub$ the preimage $\pi^{-1}(S)$ is measurable in $\profi$, and its Haar measure is $\mu_{\profi/\profisub}(S)$. If $\profi/\profisub$ is finite (i.e.\@ if $\profisub$ is open), this measure  equals $\#S/\#( \profi/\profisub)$.
\end{lem}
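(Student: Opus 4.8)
The plan is to deduce both assertions from a single fact: the normalised Haar measure of $\profi/\profisub$ coincides with the pushforward of $\mu_{\profi}$ under the projection $\pi$. Concretely, I would set $\nu(S):=\mu_{\profi}\bigl(\pi^{-1}(S)\bigr)$ and prove that $\nu$ is \emph{the} normalised Haar measure of $\profi/\profisub$; the displayed equality $\mu_{\profi}\bigl(\pi^{-1}(S)\bigr)=\mu_{\profi/\profisub}(S)$ is then just a reformulation, and the finite case follows by specialisation.

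First I would dispose of measurability. Since $\profisub$ is closed, $\profi/\profisub$ is again a profinite (hence compact Hausdorff) group and $\pi$ is a continuous, open, surjective homomorphism; therefore $\pi^{-1}$ sends clopen sets to clopen sets and, more generally, Borel sets to Borel sets, so $\pi^{-1}(S)$ is measurable whenever $S$ is Borel — in particular for every $S$ when $\profi/\profisub$ is finite, which is the case the paper actually uses. Next I would check that $\nu$ is a Borel probability measure: countable additivity is inherited from $\mu_{\profi}$ via $\pi^{-1}\bigl(\bigsqcup_i S_i\bigr)=\bigsqcup_i \pi^{-1}(S_i)$, and $\nu(\profi/\profisub)=\mu_{\profi}(\profi)=1$. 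Then comes the only substantive step, translation-invariance: given $\bar g\in\profi/\profisub$, pick any lift $g\in\profi$; then $\pi^{-1}(\bar g\,S)=g\cdot\pi^{-1}(S)$, so left-invariance of $\mu_{\profi}$ gives $\nu(\bar g\,S)=\nu(S)$, and symmetrically on the right. By the uniqueness of the normalised Haar measure on the compact group $\profi/\profisub$, this forces $\nu=\mu_{\profi/\profisub}$, which is the first claim. Finally, if $\profisub$ is open then $\profi/\profisub$ is a finite discrete group, $\pi^{-1}(S)$ is a disjoint union of $\#S$ cosets of $\profisub$, and applying the identity just proved to singletons shows each coset has measure $1/\#(\profi/\profisub)$; summing yields $\mu_{\profi}\bigl(\pi^{-1}(S)\bigr)=\#S/\#(\profi/\profisub)$.

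I expect essentially all the content to sit in the invariance check combined with the appeal to uniqueness of Haar measure on a compact group; the measurability assertion is formal once one knows $\pi$ is continuous and open, and the finite case is immediate from the general identity. Since the statement is literally \cite[Lemma 18.1.1 and Proposition 18.2.2]{book:71486}, the cleanest option in the paper is simply to cite it.
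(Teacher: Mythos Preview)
Your argument is correct and is the standard one: push forward $\mu_{\profi}$ along $\pi$, check translation-invariance via lifts, and invoke uniqueness of normalised Haar measure on the compact quotient. The paper, however, gives no proof at all for this lemma; it is stated purely as a citation to \cite[Lemma 18.1.1 and Proposition 18.2.2]{book:71486}, exactly as you anticipated in your final sentence.

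One small remark on the measurability clause: as written, the lemma asserts measurability of $\pi^{-1}(S)$ for \emph{any} subset $S$ of $\profi/\profisub$, which is literally true only when every subset of the quotient is Borel (e.g.\ when $\profi/\profisub$ is finite, the only case the paper actually uses). Your proof correctly restricts to Borel $S$ in the general case and then observes that this covers all subsets in the finite case, which is the right way to handle the slight imprecision in the statement.
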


\begin{rem}\label{rmk:SalphaSubsetOfTheAbsoluteGaloisGroup}
We may equivalently consider $S_\alpha$ as a subset of $\Gal(\overline{K}/K)$ or of $\Gal(K_\alpha/K)$: since $\bar{\sigma}\in\Gal(\overline{K}/K)$ acts on $\ell^{-\infty}(\alpha)$ through its image $\sigma\in\Gal(K_\alpha/K)$, the set
\[
\overline{S_\alpha}=\{ \bar{\sigma} \in \Gal(\overline{K}/K) : \Res_{\bar{\sigma}}(C_\alpha)=0 \}
\]
is the inverse image in $\Gal(\overline{K}/K)$ of $S_\alpha$, and hence $\mu_{\Gal(\overline{K}/K)}(\overline{S_\alpha})=\mu_{\Gal(K_\alpha/K)}(S_\alpha)$ by Lemma \ref{Haar2}. \end{rem}

\begin{prop}\label{alien}
If $L/K$ is any Galois extension which is linearly disjoint from $K_{\alpha}$ over $K$ we have $\Dens_L(\alpha)=\Dens_K(\alpha)$.
\end{prop}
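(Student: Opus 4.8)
The plan is to reduce everything to the cohomological description of the density given by \eqref{eq:interpretation}. Since $(A/K,\ell,\alpha)$ is as in Definition \ref{defi-conditions}, we have $\Dens_K(\alpha)=\mu(S_\alpha)$ with $S_\alpha\subseteq\Gal(K_\alpha/K)$ the set of \eqref{bis}, i.e.\ the set of $\sigma=(t_\sigma,M_\sigma)$ with $t_\sigma\in\im(M_\sigma-I)$, where $(t_\sigma,M_\sigma)=\omega(\sigma)$ is the image of $\sigma$ under the arboreal representation. So it is enough to check that the triple $(A/L,\ell,\alpha)$ produces literally the same set, measured in the same way.

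First I would fix an embedding $L\hookrightarrow\overline K$ and note that $L(A[\ell^\infty],\ell^{-\infty}(\alpha))=L\cdot K_\alpha$. Because $K_\alpha/K$ is Galois and $L$ is linearly disjoint from $K_\alpha$ over $K$ (so in particular $L\cap K_\alpha=K$), the restriction homomorphism $\rho\colon\Gal(L\cdot K_\alpha/L)\to\Gal(K_\alpha/K)$ is an isomorphism of profinite groups by the translation theorem of Galois theory, hence it carries the normalized Haar measure of the source onto that of the target. Next I would observe that the arboreal representation attached to $(A/L,\alpha)$ is simply $\omega\circ\rho$: indeed $A[\ell^\infty]$ and every term of a fixed compatible system $\{\beta_n\}_{n\geqslant1}$ over $\alpha$ already lie in $K_\alpha$, so the pair $(t_\sigma,M_\sigma)$ attached to $\sigma\in\Gal(L\cdot K_\alpha/L)$ depends only on $\rho(\sigma)$ and equals $\omega(\rho(\sigma))$. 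In particular the image of the $\ell$-adic representation of $A/L$ is again $\imGal$, and for all $n'\geqslant n$ the degrees $[L_{n'}(\ell^{-n'}\alpha):L_n(\ell^{-n}\alpha)]$, where $L_m:=L(A[\ell^m])$, coincide with their analogues over $K$; therefore $(A/L,\ell,\alpha)$ still satisfies the conditions of Definition \ref{defi-conditions} (equivalently \eqref{refomulation-Defi}) with the same $n_0$ and the same $\dim\imGal$, the point $\alpha$ still has infinite order in $A(L)$, and \eqref{eq:interpretation} applies over $L$ and gives $\Dens_L(\alpha)=\mu(S_\alpha^L)$, where $S_\alpha^L$ denotes the analogue of $S_\alpha$ for $A/L$.

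To finish I would identify $S_\alpha^L$ with $\rho^{-1}(S_\alpha)$: by Remark \ref{ideal} and \eqref{bis} the membership of $\sigma$ in $S_\alpha^L$ is a condition on $(t_\sigma,M_\sigma)=\omega(\rho(\sigma))$ alone, so $\sigma\in S_\alpha^L$ if and only if $\rho(\sigma)\in S_\alpha$. As $\rho$ preserves the normalized Haar measure, $\mu(S_\alpha^L)=\mu(S_\alpha)$, and combining with \eqref{eq:interpretation} over both fields yields $\Dens_L(\alpha)=\Dens_K(\alpha)$.

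The one point that requires genuine care is the middle step: verifying that base change to $L$ does not disturb the hypotheses of Definition \ref{defi-conditions} — this is precisely where linear disjointness enters, through the equality of the torsion/Kummer degrees over $L$ and over $K$ — and, if one wishes to allow $[L:K]=\infty$, being explicit that in that generality $\Dens_L(\alpha)$ is to be read via \eqref{eq:interpretation} as $\mu(S_\alpha^L)$, in which case the argument is unaffected.
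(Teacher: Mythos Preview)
Your proof is correct and follows essentially the same approach as the paper: both reduce to the cohomological interpretation \eqref{eq:interpretation}, use linear disjointness to identify $\Gal(L_\alpha/L)$ with $\Gal(K_\alpha/K)$ as measured spaces, and observe that the action on $\ell^{-\infty}(\alpha)$ factors through this identification. You are in fact more careful than the paper in explicitly verifying that $(A/L,\ell,\alpha)$ again satisfies Definition~\ref{defi-conditions} (so that \eqref{eq:interpretation} applies over $L$), a point the paper leaves implicit.
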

\begin{proof}
By Theorem \ref{thm:interpretation} and Remark \ref{rmk:SalphaSubsetOfTheAbsoluteGaloisGroup} we have to prove that, considering $S_\alpha$ as a subset of $\Gal(K_\alpha/K)$ or of $\Gal(L_\alpha/L)$, we have $\mu_{\Gal(K_\alpha/K)}(S_\alpha)=\mu_{\Gal(L_\alpha/L)}(S_\alpha)$. 
Since $L$ and $K_\alpha$ are linearly disjoint over $K$, the restriction map $\Gal(L_\alpha/L) \to \Gal(K_\alpha/K)$ is an isomorphism of groups and of measured spaces. We may easily conclude because $\ell^{-\infty}(\alpha)\subset K_\alpha$, so that in particular the action of $\operatorname{Gal}(L_\alpha/L)$ on $\ell^{-\infty} \alpha$ factors through $\Gal(K_\alpha/K)$.
\end{proof}

\section{The density as an $\ell$-adic integral}\label{sect:MeasurePreimages}

\subsection{The $1$-Eigenspace for elements in the image of the $\ell$-adic representation}\label{sect:1Eigen}
Recall that we denote by $\imGal$ the image of the $\ell$-adic Galois representation attached to $A$. 
For every $M \in \operatorname{Aut}(T_\ell(A))$, the kernel of $M- I : A[\ell^\infty] \to A[\ell^\infty]$ is a (possibly infinite) abelian $\ell$-group. We restrict our attention to those $M$ for which $\ker(M-I)$ is finite. If $\fine$ is a finite abelian $\ell$-group with at most $b$ cyclic components, we define  
\begin{equation}\label{MGH}
\mathcal M_{\fine}:=\{M\in \imGal: \, \ker \left (M-I : A[\ell^\infty] \to A[\ell^\infty]\right) \cong  \fine  \}
\end{equation}
and also define
\begin{equation}\label{MathcalM}
\mathcal M :=\bigcup_{\fine} \mathcal{M}_{\fine},
\end{equation}
where the union is taken over all finite abelian $\ell$-groups with at most $b$ cyclic components.
We write $\mathcal{M}_{\fine}(n)$ for the image of $\mathcal{M}_{\fine}$ under the reduction map $\imGal \to \imGal(n)$, and denote by $\exp \fine$ the exponent of the finite group $\fine$.

\begin{lem}\label{lemma:EverythingHasWellDefinedAB}
The set $\mathcal{M}_{\fine}$ of \eqref{MGH} is measurable in $\imGal$ and we have $\mu (\mathcal{M}_{\fine})=\mu(\mathcal{M}_{\fine}(n))$ for every  $n>v_\ell(\exp \fine)$. In particular we have $\mu (\mathcal{M}_{\fine})=0$ if and only if $\mathcal{M}_{\fine}=\emptyset$. The set $\mathcal M$ of \eqref{MathcalM} is measurable in $\imGal$ and, if $A$ satisfies \eqref{C1}, we have $\mu(\mathcal{M})=1$.
\end{lem}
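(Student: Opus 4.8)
The plan is to establish the three assertions of Lemma \ref{lemma:EverythingHasWellDefinedAB} in order: first the stabilization $\mu(\mathcal M_{\fine}) = \mu(\mathcal M_{\fine}(n))$ for $n > v_\ell(\exp \fine)$, then measurability of $\mathcal M$, then $\mu(\mathcal M) = 1$ under \eqref{C1}. The crux of the first assertion is that the isomorphism type of $\ker(M-I : A[\ell^\infty] \to A[\ell^\infty])$ is already determined by $M \bmod \ell^{n}$ once $n$ exceeds $v_\ell(\exp \fine)$. I would argue this as follows: if $\fine$ is a finite abelian $\ell$-group with $\leqslant b$ cyclic components and $\ker(M-I) \cong \fine$, then $\fine \subseteq A[\ell^{n}]$ already for $n = v_\ell(\exp\fine)$, so all fixed points of $M$ on $A[\ell^\infty]$ lie in $A[\ell^{n}]$. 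For any $M' \equiv M \pmod{\ell^{n+1}}$ one checks that $\ker(M'-I)$ contains a copy of $\fine$ (lifting generators of $\fine$, which are killed by $\ell^{n}$, and using $(M'-I)x \equiv (M-I)x \equiv 0 \pmod{\ell^{n}}$ together with a short Hensel-type/$\ell$-divisibility argument to correct them to genuine fixed points), and conversely that $\ker(M'-I)$ cannot be strictly larger, because a fixed point of $M'$ of order $\ell^{n+1}$ would force a fixed point of $M$ of order $\ell^{n+1}$ after reduction — contradicting $\ker(M-I)\subseteq A[\ell^{n}]$. Hence membership in $\mathcal M_{\fine}$ depends only on the reduction mod $\ell^{n+1}$ (or mod $\ell^{n}$ with a little more care), so $\mathcal M_{\fine}$ is the preimage of $\mathcal M_{\fine}(n)$ under $\imGal \to \imGal(n)$, which by Lemma \ref{lemma:Haar} is measurable with measure $\mu(\mathcal M_{\fine}(n))$. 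Since $\imGal(n)$ is finite, $\mathcal M_{\fine}(n) = \emptyset$ iff $\mu(\mathcal M_{\fine}(n)) = 0$, giving the ``in particular'' clause.

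For measurability of $\mathcal M = \bigcup_{\fine} \mathcal M_{\fine}$: the union is over all finite abelian $\ell$-groups with at most $b$ cyclic components, which is a countable index set, and each $\mathcal M_{\fine}$ is measurable by the previous step, so $\mathcal M$ is measurable as a countable union. The sets $\mathcal M_{\fine}$ are pairwise disjoint (an $M$ determines $\ker(M-I)$ up to isomorphism), so $\mu(\mathcal M) = \sum_{\fine} \mu(\mathcal M_{\fine})$.

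For $\mu(\mathcal M) = 1$ under \eqref{C1}: the complement $\imGal \setminus \mathcal M$ consists exactly of those $M$ for which $\ker(M-I : A[\ell^\infty] \to A[\ell^\infty])$ is infinite, equivalently those $M$ for which $M - I$ is not injective on $T_\ell A \otimes \mathbb Q_\ell$, i.e.\ those $M$ with $1$ as an eigenvalue (det$(M-I) = 0$ in $\mathbb Z_\ell$). This is a proper Zariski-closed condition on the Zariski closure $\ZarGal$ of $\imGal$ — strictly proper because $\imGal$ contains elements with no eigenvalue $1$ (for instance a suitable power of a Frobenius, or simply because the identity component of $\ZarGal$ is not contained in the hypersurface $\{\det(x-I)=0\}$; if it were, every element of $\imGal$ near the identity would have $1$ as eigenvalue, which is false). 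Therefore $\imGal \setminus \mathcal M$ is contained in the $\Z_\ell$-points of a proper closed subvariety of $\ZarGal$, hence has lower dimension; combined with \eqref{C1}, which via Lemma \ref{matrici} and Definition \ref{def:TgSpace} pins down the growth rate $\#\imGal(n) \sim \ell^{n \dim \imGal}$, a point-counting estimate shows $\#\{M \in \imGal(n) : \det(M - I) \equiv 0 \pmod \ell\}$ grows like $\ell^{n(\dim\imGal - 1)}$ at most, so the measure of the complement is $0$. The main obstacle is the bookkeeping in the first step — making precise that ``no new fixed points appear and none disappear when refining mod $\ell^{n}$ to mod $\ell^{n+1}$'' — and, for the last step, cleanly extracting the measure-zero conclusion from \eqref{C1}; I would phrase the latter by noting that the reduction of a proper subvariety has codimension-$\geqslant 1$ point count, which is $o(\#\imGal(n))$ precisely because \eqref{C1} forces the full-dimensional growth rate on $\imGal(n)$ itself.
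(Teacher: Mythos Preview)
Your treatment of the first two assertions is essentially the same as the paper's, though you overcomplicate the first step: once $M' \equiv M \pmod{\ell^{n+1}}$ (with $n = v_\ell(\exp\fine)$), the endomorphisms $M-I$ and $M'-I$ literally coincide on $A[\ell^{n+1}]$, so their kernels there agree exactly with $\fine$; no Hensel-type correction is needed. Since that common kernel has exponent $\ell^n < \ell^{n+1}$, neither $M$ nor $M'$ can have a fixed point of order $\ell^{n+1}$, and hence none of higher order either. So $\mathcal M_{\fine} = \pi_{n+1}^{-1}(\mathcal M_{\fine}(n+1))$ and Lemma \ref{lemma:Haar} applies, just as in the paper.

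The genuine gap is in your argument that $\mu(\mathcal M)=1$. You assert that the hypersurface $\{\det(x-I)=0\}$ cuts $\ZarGal$ properly, offering two reasons: ``a suitable power of a Frobenius'' and ``every element of $\imGal$ near the identity would have $1$ as eigenvalue, which is false''. Neither is justified. The second would require knowing that the Lie algebra of $\ZarGal$ contains a nonsingular matrix, which is a nontrivial input (essentially the presence of homotheties), and the lemma is stated under the sole hypothesis \eqref{C1}. The first hint points in the right direction but omits the actual argument. The paper proceeds as follows: if $\dim V = \dim \imGal$, then $V(\Z_\ell)$ contains an open subset of $\imGal$; by Chebotarev density one finds a Frobenius at an unramified prime $\mathfrak p$ (of residue characteristic $\neq \ell$) lying in this open set, so $\operatorname{Frob}_{\mathfrak p}$ fixes a nonzero element of $T_\ell A$. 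Since the local Galois group at $\mathfrak p$ is topologically generated by $\operatorname{Frob}_{\mathfrak p}$, this gives infinitely many $\ell$-power torsion points of $A$ rational over $K_{\mathfrak p}$, contradicting the finiteness result of Proposition \ref{finiteness-Av}. This is the missing ingredient.

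A minor point: your point-count is phrased for the set $\{M \in \imGal(n) : \det(M-I) \equiv 0 \pmod \ell\}$, but that set has positive measure (it is the pullback of a mod-$\ell$ condition) and does \emph{not} shrink like $\ell^{n(\dim\imGal-1)}$. What you need is $\#\pi_n(V(\Z_\ell)) = O(\ell^{n\dim V})$, which is the Serre--Oesterl\'e bound the paper invokes; combined with \eqref{C1} this gives $\mu(\pi_n(\imGal\setminus\mathcal M))\to 0$.
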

\begin{proof}
Call $\pi_n: \imGal\rightarrow \imGal(n)$ the reduction modulo $\ell^n$.  
For $n>v_\ell(\exp \fine)$ the defining condition for $\mathcal M_{\fine}$ can be checked modulo $\ell^n$, so we have $\mathcal{M}_{\fine}=\pi_n^{-1}(\mathcal{M}_{\fine}(n))$ and the first assertion follows from Lemma \ref{lemma:Haar}. The set $\mathcal{M}$ is measurable because it is a countable union of measurable sets, and we are left to prove $\mu(\imGal\setminus \mathcal M)=0$. Since $\imGal\setminus \mathcal M \subseteq \pi_n^{-1}\left( \pi_n\left(\imGal\setminus \mathcal M \right) \right)$,  by Lemma \ref{lemma:Haar} it suffices to show that
\begin{equation}\label{eq:HaarSmall}
\mu(\pi_n\left(\imGal\setminus \mathcal M \right))=\frac{\# \pi_n\left(\imGal\setminus \mathcal M \right)}{\#\imGal(n)}
\end{equation}
tends to 0 as $n$ tends to infinity. By \eqref{C1}, the cardinality of $\imGal(n)$ is asymptotically given by a constant (positive) multiple of $\ell^{n \dim \imGal}$. Let $\ZarGal$ be the Zariski closure of $\imGal$ in $\GL_{b,\mathbb Q_\ell}$  and let $\subvar$ be the closed $\ell$-adic analytic subvariety of $\ZarGal (\Q_\ell)$ defined by the equation $\det(M-I)=0$. Define $\subvar(\mathbb Z_\ell):=\subvar \cap \GL_{b}(\mathbb Z_\ell)$.
 If $M\in \imGal$ does not satisfy $\det(M-I)=0$ we must have $\det_\ell(M-I) \leqslant n$ for some $n$,  thus the kernel of $M-I$ is finite: this shows $\imGal\setminus \mathcal M\subseteq \subvar(\mathbb Z_\ell)$. The numerator of \eqref{eq:HaarSmall} is then at most $\# \pi_n(\subvar(\mathbb Z_\ell))$, which by \cite[Theorem 4]{MR656627} is bounded from above by a constant times $\ell^{n \dim(\subvar)}$. To conclude, we only need to prove $\dim(\subvar)<\dim \imGal$.

Suppose instead $\dim(\subvar)=\dim \imGal$. Then $\subvar(\mathbb Z_\ell)$ contains an open subset of $\imGal$ and hence the preimage of $\subvar(\mathbb Z_\ell)$ in $\Gal(\overline{K}/K)$ contains some open subset $U$.
Since Frobenius elements are dense in $\Gal(\overline{K}/K)$, we can find infinitely many of them in $U$ (and by definition any such automorphism acts on $T_\ell(A)$ with a fixed point). We now show that this is impossible.

By Lemma \ref{lem:UnramifiedAlmostEverywhere} we can find a prime $\mathfrak{p}$ of $K$ such that a corresponding Frobenius element is in $U$, the characteristic of $\mathfrak{p}$ is different from $\ell$, and the $\ell$-adic Galois representation attached to $A$ is unramified at $\mathfrak{p}$. Consider the completion $K_\mathfrak p$. 
The assumption that the representation is unramified implies that the image of $\operatorname{Gal}(\overline{K_\mathfrak p}/K_\mathfrak p)$ in $\Aut(T_\ell(A))$ is topologically generated by a Frobenius element, and hence this Galois group acts on $T_\ell(A)$ with a fixed point. This contradicts  the finiteness of the torsion subgroup of $A(K_\mathfrak p)$, see Proposition \ref{finiteness-Av}.
\end{proof}

 \subsection{The condition on the arboreal representation}

We keep the notation of Section\@ \ref{torsionkummer} and suppose that $(A/K, \alpha, \ell)$ are as in Definition \ref{defi-conditions}, fixing $n_0$ as appropriate.
Recall that we identify $\imGal(n)$ and $\operatorname{Gal}\left(K_n/K \right)$ and that we see $\operatorname{Gal}\left(K_{\alpha,n}/K\right)$ as a subgroup of $A[\ell^n] \rtimes \operatorname{Gal}\left(K_n/K \right)$. Denoting by $\pi_1, \pi_2$ the two natural projections, for each $M \in \imGal(n)$ we define the set 
\begin{equation}\label{Wu}
{\set}_n(M):=\pi_1\circ\pi_2^{-1}(M)=\{ t\in A[\ell^{n}] \mid (t, M)\in \Gal({K_{n}(\ell^{-n}\alpha)/K)}\}\,.
\end{equation}

\begin{lem}\label{translate}
The set $\set_n(M)$ is a translate of $\set_n(I)$.
\end{lem}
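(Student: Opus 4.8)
The plan is to exploit the group structure of $\Gal(K_n(\ell^{-n}\alpha)/K)$ inside $A[\ell^n] \rtimes \Gal(K_n/K)$ together with the fact that the Kummer extension grows maximally. First I would observe that $\set_n(M)$ is nonempty precisely when $M$ lies in the image of the second projection $\pi_2$; since $\imGal(n)$ is the full image of $\Gal(K_{\alpha,n}/K)$ under $\pi_2$ (every element of $\Gal(K_n/K)$ lifts to an automorphism of $K_n(\ell^{-n}\alpha)$), the set $\set_n(M)$ is nonempty for every $M \in \imGal(n)$, and in particular $\set_n(I)$ is nonempty. Pick any $(t_0, I) \in \Gal(K_n(\ell^{-n}\alpha)/K)$, i.e. any $t_0 \in \set_n(I)$, and any $(s, M) \in \Gal(K_n(\ell^{-n}\alpha)/K)$, i.e. any $s \in \set_n(M)$.

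The key step is then a direct computation in the semidirect product. Given any $t \in \set_n(I)$, the pair $(t, I)$ lies in the group $\Gal(K_n(\ell^{-n}\alpha)/K)$, and so does $(s,M)$; hence their product $(s, M)(t, I) = (s + M t, M)$ also lies in the group, showing $s + Mt \in \set_n(M)$. Conversely, if $s' \in \set_n(M)$, then $(s', M)(s, M)^{-1} = (s' - s, I) \in \Gal(K_n(\ell^{-n}\alpha)/K)$ gives $s' - s \in \set_n(I)$, so $s' \in s + \set_n(I)$ once we know $\set_n(I)$ is stable under $M$ — which it is, since $\set_n(I)$ is a subgroup of $A[\ell^n]$ (it is the kernel of $\pi_2$ restricted to $\Gal(K_n(\ell^{-n}\alpha)/K)$, composed with the identification $(t,I) \leftrightarrow t$) and $M$ normalizes this kernel by conjugation, $(M \text{ acting on } (t,I)) = (0,M)(t,I)(0,M)^{-1} = (Mt, I)$. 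Combining the two inclusions yields $\set_n(M) = s + \set_n(I)$, so $\set_n(M)$ is a translate of $\set_n(I)$ as claimed.

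The only subtlety to address carefully is that $\set_n(I)$, under the identification $t \mapsto (t,I)$, really is a subgroup of $A[\ell^n]$ — this is exactly the statement that $\{0\} \times \{I\}$ times the kernel of $\pi_2$ is a subgroup, which holds because $A[\ell^n] \rtimes \{I\}$ is a normal subgroup of $A[\ell^n] \rtimes \Gal(K_n/K)$ and intersecting a subgroup with a normal subgroup gives a subgroup. I do not expect any real obstacle here; the lemma is essentially a formal consequence of the semidirect-product structure and of surjectivity of $\pi_2$ onto $\imGal(n)$, and no appeal to the growth conditions \eqref{C2i}–\eqref{C2ii} is needed beyond what is implicit in the setup. (One could alternatively phrase the whole argument by noting that $\pi_2 \colon \Gal(K_n(\ell^{-n}\alpha)/K) \to \imGal(n)$ is a surjective group homomorphism whose fibres are exactly the sets $\set_n(M)$, hence all cosets of the fibre $\set_n(I)$ over the identity — but that phrasing glosses over the $M$-twist, so I prefer the explicit computation above.)
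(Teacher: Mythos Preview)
Your proof is correct and follows essentially the same approach as the paper's: both exploit the group law in the semidirect product to show that the fibre $\set_n(M)$ is a coset of the kernel $\set_n(I)$. The only difference is a matter of efficiency. You multiply $(s,M)(t,I)=(s+Mt,M)$, which introduces an $M$-twist and forces you to check that $M\cdot\set_n(I)=\set_n(I)$ via normality of the kernel; the paper instead multiplies in the other order, $(v,I)(t_0,M)=(v+t_0,M)$, which gives $t_0+\set_n(I)\subseteq\set_n(M)$ directly with no twist and no auxiliary argument. (Incidentally, your ``alternative phrasing'' via fibres of the surjection $\pi_2$ is perfectly rigorous and does not gloss over anything: since the kernel is normal, every fibre is simultaneously a left and a right coset, and the right-coset description $\{(t,I)\}\cdot(t_0,M)=\{(t+t_0,M)\}$ already yields $\set_n(M)=t_0+\set_n(I)$.)
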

\begin{proof}
Fix $t_0\in \set_n(M)$. If $t\in \set_n(M)$, we have $(t, M) (t_0, M)^{-1}=(t-t_0,I)$ and hence $t-t_0\in \set_n(I)$. If $v \in \set_n(I)$, we have $(v, I) (t_0, M)=  (v+t_0, M)$ and hence $v+t_0\in \set_n(M)$.
\end{proof}
We also define the rational number  
\begin{equation}\label{wu}
\function_n(M):=\frac{\# \big(\im(M-I) \cap \set_n(M)\big)}{\# \im(M-I)}\,.
\end{equation}

For $M\in \imGal$ we can define $\set_n(M):=\set_n(M_n)$ and $\function_n(M):=\function_n(M_n)$, where $M_n$ is the reduction of $M$ modulo $\ell^n$.

\begin{lem}\label{lemma:Extendw}If $M\in \imGal$, the value $\function_n(M_{n})$ is independent of $n$ for $n \geqslant n_0$, and we call it $\function(M)$: it is either zero or a power of $\ell$ with exponent $\leqslant 0$.
\end{lem}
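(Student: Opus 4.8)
The plan is to show that the sequence $\function_n(M_n)$ stabilises by comparing both the numerator $\#\bigl(\im(M_n-I)\cap\set_n(M_n)\bigr)$ and the denominator $\#\im(M_n-I)$ as $n$ grows. First I would analyse the denominator: since $M\in\imGal$ has finite order kernel on $A[\ell^\infty]$ is not assumed here, but in any case $\ker(M-I)$ on $A[\ell^n]$ and $\coker(M-I)$ on $A[\ell^n]$ are controlled, and a standard computation with the structure theorem shows that $\#\im(M_n-I:A[\ell^n]\to A[\ell^n])=\ell^{bn}/\#\ker(M_n-I)$, so the denominator grows by exactly a factor $\ell^{b}$ at each step once $n$ exceeds $v_\ell$ of the relevant invariant factors — i.e. for $n\geqslant n_0$ (enlarging $n_0$ if necessary, which is harmless). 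The key input for the numerator is Theorem~\ref{thm:IfXThenLevel}(ii) together with \eqref{refomulation-Defi}: for $n\geqslant n_0$ the group $\Gal(K_{\alpha,n+1}/K)$ is the full inverse image of $\Gal(K_{\alpha,n}/K)$ inside $A[\ell^{n+1}]\rtimes\imGal(n+1)$. This means that $\set_{n+1}(M_{n+1})$ is exactly the preimage of $\set_n(M_n)$ under the reduction map $A[\ell^{n+1}]\to A[\ell^n]$, once we have fixed compatible lifts $M_{n+1}\mapsto M_n$; in particular $\#\set_{n+1}(M_{n+1})=\ell^b\cdot\#\set_n(M_n)$.

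Next I would intersect with $\im(M-I)$. Writing $r_n:A[\ell^{n+1}]\to A[\ell^n]$ for reduction, the point is that $r_n$ maps $\im(M_{n+1}-I)$ onto $\im(M_n-I)$ and maps $\set_{n+1}(M_{n+1})$ onto $\set_n(M_n)$ (the latter by Theorem~\ref{thm:IfXThenLevel}(ii) as above, the former because $M-I$ commutes with reduction). So $r_n$ carries $\im(M_{n+1}-I)\cap\set_{n+1}(M_{n+1})$ into $\im(M_n-I)\cap\set_n(M_n)$, and I claim this map is surjective with fibres all of the same size. Surjectivity: given $x\in\im(M_n-I)\cap\set_n(M_n)$, lift it to some $\tilde x\in\im(M_{n+1}-I)$ (possible since $r_n$ is onto on images), and lift it also to some $x'\in\set_{n+1}(M_{n+1})$; then $\tilde x-x'\in\ker r_n\cap A[\ell^{n+1}]=\ell^n A[\ell^{n+1}]\cong A[\ell]$, and one corrects $\tilde x$ by an element of $\ker(r_n)\cap\im(M_{n+1}-I)$ using that for $n\geqslant n_0$ the subgroup $\ker(r_n)\cap\set_{n+1}(M_{n+1})$ equals all of $\ell^nA[\ell^{n+1}]$ (again by Theorem~\ref{thm:IfXThenLevel}(ii): $\set_{n+1}$ is a full preimage, hence contains $\set_n(M_n)$'s fibre over $x$, which is a coset of $\ell^n A[\ell^{n+1}]$). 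The fibre over any $x$ is then a coset of $\ker(r_n)\cap\im(M_{n+1}-I)\cap\bigl(\text{translate of }\set_{n+1}(M_{n+1})\bigr)$, and by Lemma~\ref{translate} this has size independent of $x$ and of $M$; a short computation identifies it with $\#\bigl(\ell^n A[\ell^{n+1}]\cap\im(M_{n+1}-I)\bigr)$, which is $\ell^b$-independent for $n\geqslant n_0$. Hence the numerator also grows by exactly a factor $\ell^b$ at each step, so the ratio $\function_n(M_n)$ is constant for $n\geqslant n_0$.

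Finally, for the last sentence: $\function(M)$ is visibly a quotient of two powers of $\ell$ (the cardinalities of finite abelian $\ell$-groups), with the numerator dividing the denominator since $\im(M-I)\cap\set_n(M)\subseteq\im(M-I)$; thus $\function(M)$ is $\ell^{-k}$ for some integer $k\geqslant 0$, or zero if the intersection is empty for every $n$ (equivalently, by the stabilisation, for one $n\geqslant n_0$). The main obstacle I anticipate is the surjectivity-with-uniform-fibres claim for the intersected sets: one must be careful that the lift of $x$ into $\set_{n+1}(M_{n+1})$ and the lift into $\im(M_{n+1}-I)$ can be reconciled, and this is exactly where the full-preimage structure from Theorem~\ref{thm:IfXThenLevel}(ii) is essential — without \eqref{C2i}–\eqref{C2ii} the set $\set_{n+1}$ need not be the preimage of $\set_n$ and the argument collapses. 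Everything else is bookkeeping with the snake lemma applied to $0\to A[\ell]\to A[\ell^{n+1}]\xrightarrow{r_n} A[\ell^n]\to 0$ and the endomorphism $M-I$.
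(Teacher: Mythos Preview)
Your strategy is essentially the paper's: compare level $n$ to a fixed reference level via multiplication by a power of $\ell$ and show that numerator and denominator of $\function_n(M_n)$ scale by the same factor. But there is a real error in the execution. You claim that $\#\im(M_n-I)$ grows by exactly $\ell^b$ at each step ``once $n$ exceeds $v_\ell$ of the relevant invariant factors''. This is false in general: the growth factor from level $n$ to $n+1$ is the size of the kernel of $r_n$ restricted to $\im(M_{n+1}-I)$, namely $\#\bigl(A[\ell]\cap\im(M_{n+1}-I)\bigr)$, which can be strictly smaller than $\ell^b$ and can vary with $n$. Your proposed escape, ``enlarging $n_0$ if necessary, which is harmless'', is not permissible: the lemma asserts stability from the \emph{fixed} $n_0$ of Definition~\ref{defi-conditions}, uniformly in $M$, and for $M\notin\mathcal M$ (i.e.\ $\det(M-I)=0$) no finite enlargement would help anyway. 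The fix is immediate once you notice that you have already identified the numerator's fibre size correctly as $\#\bigl(\ell^nA[\ell^{n+1}]\cap\im(M_{n+1}-I)\bigr)=\#\bigl(A[\ell]\cap\im(M_{n+1}-I)\bigr)$: this is exactly the kernel of $r_n|_{\im(M_{n+1}-I)}$, hence equals the denominator's growth factor as well, and the two cancel without your ever needing to know their common value. (The paper argues this way, going from $n$ to $n_0$ in one step via $\ell^{n-n_0}$ and calling the common kernel $Z$; it never claims $\#Z$ has any particular form.) Incidentally, your surjectivity argument is overcomplicated: once you know $\set_{n+1}(M_{n+1})=r_n^{-1}(\set_n(M_n))$, any lift $\tilde x\in\im(M_{n+1}-I)$ of $x$ automatically lies in $\set_{n+1}(M_{n+1})$ and no correction is needed.

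A smaller gap concerns the final sentence. You say the numerator $\#\bigl(\im(M_n-I)\cap\set_n(M_n)\bigr)$ is ``visibly'' a power of $\ell$ as the cardinality of a finite abelian $\ell$-group. But $\set_n(M_n)$ is only a coset of $\set_n(I)$ (Lemma~\ref{translate}), so the intersection is not a group a priori and its cardinality is not obviously a power of $\ell$. One must argue as the paper does: if the intersection is nonempty, choose $t_0$ in it and observe that
\[
\im(M_n-I)\cap\set_n(M_n)=t_0+\bigl(\im(M_n-I)\cap\set_n(I)\bigr),
\]
a coset of a genuine subgroup of $A[\ell^n]$, hence of $\ell$-power order.
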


\begin{proof}
We know $ \im(M_{n_0}-I)=\ell^{n-n_0} \im(M_{n}-I)$ because the following diagram is commutative: 
\begin{center}
\makebox{\xymatrix{
A[\ell^n] \ar[r]^{M_{n}} \ar[d]_{\ell^{n-n_0}} & A[\ell^n] \ar[d]^{\ell^{n-n_0}} \\ A[\ell^{n_0}] \ar[r]^{M_{n_0}} & A[\ell^{n_0}] }}
\end{center}
We also have $\set_n(M_n)=\ell^{-(n-n_0)}\set_{n_0}(M_{n_0})$: an inclusion clearly holds, and the two sets have cardinality $\ell^{b(n-n_0)} \#\set_{n_0}(M_{n_0})$ by \eqref{C2ii} and because by definition we have 
$$\set_{n_0}(M_{n_0})=\{ t\in A[\ell^{n_0}] \mid (t, M_{n_0})\in \Gal({K_{n_0}(\ell^{-n_0}\alpha)/K)}\}$$
where by \eqref{C2i} the condition on $t$ can be rewritten as $(t, M_n)\in \Gal(K_{n}(\ell^{-n_0}\alpha)/K)$.

Denote by $Z$ the kernel of the well-defined and surjective group homomorphism 
\begin{equation}\label{multi-proof}
\ell^{n-n_0} : \im(M_n-I) \to \im(M_{n_0}-I)\,.
\end{equation}
To prove the first assertion it suffices to show that the induced (well-defined and surjective) group homomorphism
\begin{equation}\label{inducedmap}
\ell^{n-n_0} : \im(M_n-I) \cap \set_n(M_n)  \to   \im(M_{n_0}-I) \cap \set_{n_0}(M_{n_0}) 
\end{equation}
is $\# Z$-to-1: this amounts to remarking that if $x\in \im(M_n-I) \cap \set_n(M_n)$ then 
we have $x+Z \subseteq \set_n(M_n)$ because
$\ell^{-(n-n_0)}(\ell^{n-n_0} x)\subseteq \ell^{-(n-n_0)}\set_{n_0}(M_{n_0})= \set_n(M_n)$.

We now fix some $n\geqslant n_0$ and prove that $\function_n(M_n)$ is either zero or a power of $\ell$ (the condition on the exponent is clear from $\function_n(M_n) \leqslant 1$). Recall that $\im(M_n-I)$ and $\set_n(I)$ are finite $\ell$-groups. We may suppose that $\im(M_n-I) \cap \set_n(M_n)$ is non-empty and fix some element $t_0$.
By Lemma \ref{translate} we have
\[
\set_n(M_n) \cap \im(M_n-I) = \big(t_0 + \set_n(I) \big) \cap \big(t_0 + \im(M_n-I)\big)= t_0 + \big(\set_n(I) \cap \im(M_n-I)\big),
\]
which implies our claim since $\set_n(I) \cap \im(M_n-I)$ is an $\ell$-group.
\end{proof}

\begin{exa}
Even if $M\in\mathcal{M}$ we can have $\function(M)=0$. Let $E/\mathbb{Q}$ be a non-CM elliptic curve and $\alpha \in E(\mathbb{Q})$ a point of infinite order such that the arboreal representation attached to $(E/\mathbb{Q}, \alpha, \ell)$ is surjective (for an example, see Section \ref{sect:Examples}), so that its image mod $\ell$ is  $(\Z/\ell\Z)^2\rtimes \GL_2(\Z/\ell\Z)$ (here we have fixed an isomorphism $E[\ell] \cong (\mathbb{Z}/\ell\mathbb{Z})^2$). Consider the cyclic subgroup $H$ 
of $E[\ell] \rtimes \Aut(E[\ell])$
generated by $\left( \begin{pmatrix}
0 \\ 1
\end{pmatrix}, \begin{pmatrix}
1 & 1 \\ 0 & 1
\end{pmatrix} \right)$, which has order $\ell$. Writing $K$ for the fixed field of $H$, the triple $(E/K, \ell, \alpha)$ clearly satisfies the conditions in Definition \ref{defi-conditions} with $n_0=1$.
We can find $M\in \mathcal M$ such that $M_1=\begin{pmatrix}
1 & 1 \\ 0 & 1
\end{pmatrix}$. 
By construction, the set 
$\set(M_1)$ contains only the element $\begin{pmatrix}
0 \\ 1
\end{pmatrix}$, which is not in $\im(M_1-I)$. This shows $\function(M)=\function_1(M_1)=0$.
\end{exa}

\subsection{The general formula for the density}

By Lemma \ref{lemma:EverythingHasWellDefinedAB}, the disjoint union $\mathcal M:=\cup_{\fine} \mathcal M_{\fine}$ has full measure in $\imGal$, hence the domain of integration in \eqref{general-formula2} may be replaced by $\mathcal M$.

\begin{proof}[Proof of Theorem \ref{thm-general}]
Recalling \eqref{MGH}, we consider the set
\begin{equation}
S_{\fine}= \{\sigma=(t,M)\in \Gal(K_\alpha/K)\, :\; M\in \mathcal M_{\fine} \text{ and } t\in \im (M-I)\}\,.
\end{equation}

To see that the Haar measure of $S_{\fine}$ in $\Gal(K_\alpha/K)$ is well-defined and to compute it, we consider the reduction modulo $\ell^n$ of $\mathcal{M}_{\fine}$ and the set
\begin{equation}
S_{\fine,n}=\{\sigma=(t, M)\in \Gal(K_{\alpha,n}/K)\, :  M \in \mathcal{M}_{\fine}(n) \text{ and } t\in \im(M-I)\}\,.
\end{equation}  
We restrict to $n>\max\{n_0,v_\ell(\exp \fine)\}$, where $n_0$ is as in Definition \ref{defi-conditions}. 
By \eqref{Wu} and \eqref{wu} we have 
\[
\#S_{\fine,n}  = \sum_{M\in \mathcal M_{\fine}(n)}\# \big(\im(M-I) \cap \set_n(M)\big) =\sum_{M\in \mathcal M_{\fine}(n)}\ell^{b{n}-\det_\ell(M-I)} \function_n(M)\,.
\]
From \eqref{defi-F} we deduce
\begin{equation}\label{sides}
\frac{\#S_{\fine,n}}{\# \Gal({K_{\alpha,n}/K)}}=   \frac{1}{\#\imGal(n)} \sum_{M\in \mathcal M_{\fine}(n)}  \failure\cdot \ell^{-\det_\ell(M-I)} \cdot \function_n(M).
\end{equation}

By \eqref{refomulation-Defi} the left hand side of \eqref{sides} is a non-increasing function of $n$, and therefore it admits a limit for $n\rightarrow \infty$, which is  $\mu(S_{\fine})$. We claim that $S_{\fine,n}$ is the image of $S_{\fine}$ in $\Gal(K_{\alpha,n}/K)$.

The set $S_{\fine,n}$ clearly contains the reduction modulo $\ell^n$ of $S_{\fine}$, so we prove the other inclusion. Let $\sigma_n=(t_n, M_n) \in S_{\fine,n}$. The natural map $\Gal(K_{\alpha}/K) \to \Gal(K_{\alpha,n}/K)$ is surjective, so there is an element $(t,M)$ in $\Gal(K_{\alpha}/K)$ that reduces to $\sigma_n$. Since  $n>\exp(\fine)$, we have $\ker(M-I)\simeq \ker(M_n-I)$ and hence $M \in \mathcal M_{\fine}$. We now construct an element of $S_{\fine}$ reducing to $\sigma_n$: 
take $a_n \in A[\ell^n]$ satisfying $(M_n-I)(a_n)=t_n$, and consider a lift $a$ of $a_n$ to $ T_\ell(A)$; we may replace $t$ by $(M-I)a$ because the difference is in $\ell^n T_\ell(A)$ and since $n>n_0$ we know that $\Gal(K_{\alpha}/K)$ contains $\ell^n T_\ell(A)\times \{I\}$.

The right-hand side of \eqref{sides} is an integral over $\mathcal M_{\fine}(n)$ with respect to the normalized counting measure of ${\imGal}(n)$, and the matrices in $\mathcal M_{\fine}$ are exactly the matrices in $\imGal$ whose reduction modulo $\ell^n$ lies in $\mathcal M_{\fine}(n)$. By Lemma \ref{lemma:Extendw}, taking the limit in $n$ gives
\begin{equation}\label{musg}
\mu(S_{\fine})= \int_{\mathcal{M}_{\fine}}  \failure \cdot \ell^{-\det_\ell(x-I)} \cdot \function(x)\,\,  d\mu_{\imGal}(x)\,.
\end{equation}

Consider the natural projection $\pi:\Gal(K_\alpha/K)\rightarrow \Gal(K_\infty/K)$.
By Lemmas \ref{Haar2} and \ref{lemma:EverythingHasWellDefinedAB} the set $S_\alpha$ of \eqref{bis} is the disjoint union of the sets $S_{\fine}=S_\alpha\cap \pi^{-1}(\mathcal M_{\fine})$ up to a set of measure $0$, so we may conclude by  Theorem \ref{thm:interpretation} in the form of \eqref{eq:interpretation}.
\end{proof}

\subsection{Equivalent formulations of \eqref{general-formula2}}
Recall that $\Gal(K_\alpha/K)$ is a subgroup of $T_\ell(A)\rtimes \imGal$ and consider the two projections: the integrating function of \eqref{general-formula2} is then 
$$ M \;  \mapsto\;  \mu_{T_\ell(A)} \big(\im(M-I) \cap \pi_1 \circ \pi_{2}^{-1}(M) \big),$$
where $\mu_{T_\ell(A)}$ is the normalized Haar measure on $T_\ell(A)$. Indeed, calling $M_n$ the reduction of $M$ modulo $\ell^n$, we have by \eqref{Wu} and \eqref{wu}
\[
{\ell^{-\det_\ell(M_n-I)}}\cdot {\function_n(M_n)} = \frac{\#\big( \im(M_n-I) \cap \pi_1 \circ \pi_{2}^{-1}(M_n)\big)}{\# A[\ell^n]}
\]
and  $\mu_{T_\ell(A)}$ is the limit of the normalized counting measures on $T_\ell(A)/\ell^nT_\ell(A)\simeq A[\ell^n]$.

\begin{rem}
For every $M \in \mathcal M_{\fine}$ we have $\ell^{\det_\ell(M-I)}=\#\fine$, so we can rewrite \eqref{general-formula2} as
\begin{equation}\label{general-formula}
\Dens_\ell(\alpha)= \sum_{\fine}\frac {\failure}{\# \fine} \cdot \delta(\fine) \qquad \text{where} \quad \delta(\fine):= \int_{\mathcal{M}_{\fine}} \function(x) \, d\mu_{\imGal}(x)
\,.
\end{equation}
\end{rem}

Furthermore, we may restrict the sum in \eqref{general-formula} to those groups $\fine$ which contain a subgroup isomorphic to $A(K)[\ell^\infty]$ because for all but finitely many primes $\p$ of $K$ the group $A(K)[\ell^\infty]$ injects into the group of local points $A(\mathbb{F}_\mathfrak{p})$.

\begin{exa}\label{heuristics}
Suppose that for all $n\geqslant {1}$ the fields $K_{\alpha,n}$ and $K_{\infty}$ are linearly disjoint over $K_n$, and that $[K_{\alpha,n}:K_n]=\ell^{b\max(n-d,0)}$ holds for some $d \geqslant 0$.  We then have
\begin{equation}\label{infinite-sum}
 \Dens_\ell(\alpha) = \sum_{\fine} \frac{1}{\# \ell^d \fine} \cdot 
\mu(\mathcal M_{\fine})\,.
\end{equation}
Indeed, let $M\in \mathcal{M}_{\fine}(n)$ for some $n > \max(d, v_\ell (\exp\fine))$. We know $\im(M-I)=(\Z/\ell^{n} \Z)^{b}/\fine$ and $\set_n(M)= (\ell^d\Z/\ell^{n} \Z)^{b}$, so by elementary group theory we have
$$\function_n(M)  = \frac{\ell^{b(n-d)} \cdot (\# \ell^d\fine)^{-1}}{\ell^{bn}\cdot (\# \fine)^{-1}} = \frac{ \# \fine}{\ell^{bd}\cdot \# \ell^d \fine}
$$
independently of $n$ and $M$, and we may easily conclude because $\failure=\ell^{bd}$.
The density in \eqref{infinite-sum} equals the ``probability" that the $\ell$-part of $(\alpha \bmod \mathfrak p)$ is trivial, if we assume this to be uniformly distributed. Indeed, $\mu(\mathcal M_{\fine})$ is the ``probability" that the $\ell$-part of the group of local points $A_\mathfrak p(k_\mathfrak p)$ is isomorphic to $\fine$ (where $\fine$ varies over all finite abelian $\ell$-groups); in the group $\fine$ the ``probability" that an element (which is an $\ell^d$-power) is coprime to $\ell$ is exactly $(\#\ell^d \fine)^{-1}$. The generic case corresponds to $d=0$.
\end{exa}

\section{Asymptotic behaviour of the density}\label{sect:asymptotics}

\begin{thm}\label{thm:LimitOfDensity}
Let $A/K$ be the product of an abelian variety and a torus defined over a number field.
There exists a positive constant $c:=c(A/K)$ such that the inequality
\[
\Dens_\ell(\ell^n \alpha) \geqslant 1-\frac{c}{ \ell^{n+1}}
\]
holds for all primes $\ell$, for all integers $n\geqslant  0$, and for all points $\alpha\in A(K)$. In particular, $\Dens_\ell(\ell^n\alpha)$ goes to $1$ for $\ell^n \rightarrow \infty$ independently of $\alpha$.
\end{thm}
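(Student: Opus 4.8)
The plan is to reduce the statement, via the cohomological interpretation of the density, to one estimate on the image of the $\ell$-adic representation, and then to prove that estimate uniformly in $\ell$.

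\textbf{Reduction to one factor and to a Zariski-dense point.} Write $A=B\times T$ with $B$ an abelian variety and $T$ a torus. At a prime $\mathfrak p$ of good reduction $\ord(\alpha\bmod\mathfrak p)$ is the least common multiple of the orders of the two components, so $1-\Dens_\ell(\ell^n\alpha)\leqslant(1-\Dens_\ell(\ell^n\alpha_B))+(1-\Dens_\ell(\ell^n\alpha_T))$, and it suffices to treat an abelian variety and a torus separately. If the relevant component is torsion the corresponding term vanishes for $n$ large and is $\leqslant 1$ only while $\ell^{n+1}$ divides $\exp A(K)_{\tors}$, hence is absorbed into $c$. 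Otherwise, arguing as in Remark \ref{nieuw} (using \cite[Main Theorem]{MR2473894}) I replace the point, up to a prime-to-$\ell$ multiple (which leaves $\Dens_\ell$ unchanged), discarding the cases where the component group of the smallest algebraic subgroup is divisible by $\ell$ — there $\Dens_\ell=0$, again bounded through $\exp A(K)_{\tors}$ — so as to get a point $\beta=\ell^n\beta_0$ generating a Zariski-dense subgroup of a connected algebraic subgroup $C$, an abelian variety or a torus of dimension $\leqslant g$. Since there are only finitely many such $C$ up to isogeny, it is enough to obtain the bound for each $C$ with a constant independent of $\ell$ and of $\beta_0$.

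\textbf{From cohomology to a matrix estimate.} For $(C,\ell,\beta)$ the Conditions \eqref{C1}--\eqref{C2ii} hold by Lemma \ref{Gisp} and \cite[Theorem 2]{Bertrand}, so Theorem \ref{thm:interpretation} together with its level-$n$ refinement from Section \ref{interpretation} gives
\[
1-\Dens_\ell(\beta)=\mu\big(\{\sigma\in\Gal(\overline K/K):\ell^n t_\sigma\notin\im(M_\sigma-I)\}\big),
\]
where $(t_\sigma,M_\sigma)\in T_\ell C\rtimes\imGal$ is the arboreal image of $\sigma$ (well defined modulo $\im(M_\sigma-I)$, which is all that is used), invoking $C_\beta=\ell^n C_{\beta_0}$ and the description of the restriction maps in Remark \ref{ideal}. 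If $\ell^n t_\sigma\notin\im(M_\sigma-I)$ then $\ell^n$ does not annihilate $\coker(M_\sigma-I)$, i.e.\ the largest elementary divisor of $M_\sigma-I$ is $\geqslant\ell^{n+1}$, equivalently $M_\sigma$ fixes a primitive vector of $T_\ell C$ modulo $\ell^{n+1}$. Hence the theorem reduces to the claim: \emph{there is $c_0=c_0(A/K)$ such that for every prime $\ell$, every $k\geqslant 1$ and every $C$ as above}
\[
\mu_{\imGal}\big(\{M:\exists\, v\in T_\ell C\text{ primitive with }Mv\equiv v\bmod\ell^{k}\}\big)\leqslant c_0\,\ell^{-k},
\]
applied with $k=n+1$ (so that $1-\Dens_\ell(\ell^n\alpha)\leqslant 2c_0\ell^{-(n+1)}$ up to the terms already absorbed into $c$).

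\textbf{The matrix estimate and the main obstacle.} First, $T_\ell C\otimes\mathbb Q_\ell$ has no nonzero Galois-invariant vector — by purity/semisimplicity for an abelian variety, and because $\chi_{\mathrm{cyc}}$ stays nontrivial on every finite-index subgroup of $\Gal(\overline K/K)$ for a torus — so $\imGal$ has no common fixed primitive vector; moreover $\imGal$ is open in $\ZarGalZ=\ZarGal(\mathbb Q_\ell)\cap\GL_b(\mathbb Z_\ell)$ (Proposition \ref{prop:GeneralizedBogomolov}) and $\ZarGal$ contains the homotheties (Bogomolov, cf.\ \cite{MR587337}; trivially for tori), hence $\ZarGal\not\subseteq\{\det(M-I)=0\}$. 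For $\ell$ outside a finite set $S=S(A/K)$ the index $[\ZarGalZ:\imGal]$ is bounded uniformly in $\ell$ (for abelian varieties this uses the theory of the image of the $\ell$-adic representation together with Bogomolov's theorem; for tori it is explicit, $M_\sigma=\chi_{\mathrm{cyc}}(\sigma)\rho(\sigma)$ with $\rho$ of finite image), so it is enough to bound the measure inside $\ZarGalZ$. The set $\{M:\exists\, v\text{ primitive},\ Mv\equiv v\bmod\ell^k\}$ injects into the $\mathbb Z/\ell^k$-points of the closed subvariety $Z=\{(M,[v])\in\ZarGal\times\mathbb P^{b-1}:Mv=v\}$, whose generic fibre over $\mathbb Q_\ell$ has dimension $\leqslant\dim\ZarGal-1$ and whose equations have degree bounded in terms of $b$ only (the $\ZarGal$ occurring being reductions of finitely many $\mathbb Q$-groups); the point counts of Oesterlé and Serre (\cite[Theorems 2 and 4]{MR656627}, already used in Lemma \ref{lemma:EverythingHasWellDefinedAB}) then give $\#Z(\mathbb Z/\ell^k)\ll\ell^{k(\dim\ZarGal-1)}$ and $\#\ZarGalZ(k)\gg\ell^{k\dim\ZarGal}$, whence $\mu_{\ZarGalZ}(\cdots)\leqslant c_1\ell^{-k}$ with $c_1$ uniform in $\ell$. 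For each of the finitely many $\ell\in S$ one argues individually: since $\imGal$ has no common fixed primitive vector the measure decays geometrically (with ratio $\leqslant\ell^{-1}$ once $\imGal$ has settled inside $\ZarGalZ$), so it is $\leqslant c_\ell\ell^{-k}$; take $c_0$ to dominate $c_1\cdot\sup_{\ell\notin S}[\ZarGalZ:\imGal]$ and all the $c_\ell$. The delicate point — and where I expect the real work to lie — is exactly this uniformity in $\ell$: the $\ell$-independent index bound $[\ZarGalZ:\imGal]=O(1)$ (deep for abelian varieties), and a point-counting constant independent of $\ell$, which rests on the fixed-primitive-vector locus being a proper subvariety for all but finitely many $\ell$ (equivalently, $C$ not acquiring exceptional rational $\ell$-torsion or a rational $\ell$-isogeny for infinitely many $\ell$) together with bounded degrees of the groups $\ZarGal$; a secondary issue is making the final constant uniform over the finitely many subvarieties and subtori $C$ that occur, which follows from the isogeny-invariance, up to bounded factors, of all the quantities involved.
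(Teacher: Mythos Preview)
Your reduction through cohomology to the fixed-primitive-vector condition is correct, and the inclusion you derive is even contained in the simpler set $B_{\ell,n+1}=\{M\in\imGal:\det_\ell(M-I)\geqslant n+1\}$. But your proposed bound on the measure of this set has a genuine gap at exactly the place you flag as ``delicate'': you assert that for $\ell\notin S$ the index $[\ZarGalZ:\imGal]$ is bounded uniformly in $\ell$, and that the groups $\ZarGal$ are base-changes of finitely many $\mathbb{Q}$-groups (so that Oesterl\'e--Serre point-counting has an $\ell$-independent constant). Neither of these is known for a general abelian variety. Bogomolov's theorem gives openness for each $\ell$ separately; the uniform index bound you need would essentially be adelic openness in the Mumford--Tate group, and the ``finitely many $\mathbb{Q}$-groups'' statement is the Mumford--Tate conjecture itself. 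Your reduction to finitely many connected subgroups $C$ up to isogeny compounds the problem: the isogenies relating the various $C$ in a class have unbounded degree, so ``isogeny-invariance up to bounded factors'' is not automatic.

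The paper sidesteps all of this with a much more elementary device. It never passes to a Zariski-dense point or to subvarieties, and it never invokes the Zariski closure $\ZarGal$ or point counts on it. Instead it uses the one uniform-in-$\ell$ result that \emph{is} known, the theorem of Serre--Wintenberger that the image of the $\ell$-adic representation of $A\times\hat A$ contains $\{k^d\cdot I:k\in\mathbb Z^\times\}$ for a fixed $d=d(A/K)$ (Lemma~\ref{homotheties}). This gives a subgroup $H_{\ell,n}\subseteq\imGal(n)$ of scalars of size $\geqslant c'\ell^n$ with $c'$ independent of $\ell$. A direct coset argument over $H_{\ell,n}$ then shows that in each coset at most $O_b(1)$ elements lie in $B_{\ell,n}$, so $\mu(B_{\ell,n})\leqslant c\,\ell^{-n}$ uniformly (Lemma~\ref{lem:UsuallyInvertibleDeterminant}). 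Since $1-\Dens_\ell(\ell^n\alpha)$ is bounded by the density of primes with $\ell^{n+1}\mid\#A(\mathbb F_{\mathfrak p})$, i.e.\ by $\mu(B_{\ell,n+1})$, the theorem follows in one line. Your argument could be repaired by discarding the passage to $\ZarGalZ$ and plugging in the homothety trick at the point where you need the uniform measure bound; everything before that (the splitting into factors, the Zariski-density reduction, the subvariety $Z$) then becomes unnecessary.
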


\begin{lem}\label{homotheties}
Let $A$ be the product of an abelian variety $A'$ and a torus defined over a number field $K$. Call $K'$ the splitting field of the torus. For every prime number $\ell$ and for every integer $n\geqslant 1$ consider the subgroup $H_{\ell,n}$ of $\Gal(K_{\ell^n}/K)$
consisting of the elements that act on $A'[\ell^n]$ as multiplication by some scalar $\lambda$ (if $A'\neq 0$) and that can be lifted to an automorphism of $K'_{\ell^n}/K'$ that acts as exponentiation by $\lambda^2$ on $\zeta_{\ell^n}$. There exists some positive constant $c':=c'(A/K)$ such that for every $\ell$ and $n$ we have $\#H_{\ell,n} \geqslant c'\ell^n$.
\end{lem}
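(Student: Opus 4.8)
The plan is to build the elements of $H_{\ell,n}$ over the splitting field $K'$ of the torus — where $K'_{\ell^n}$ has a transparent description — and then transport them down to $\Gal(K_{\ell^n}/K)$. First note that $T$ splits over $K'$, so $K'(T[\ell^n])=K'(\zeta_{\ell^n})$ and hence $K'_{\ell^n}=K'(A'[\ell^n],\zeta_{\ell^n})$; moreover $K'_{\ell^n}=K'\cdot K_{\ell^n}$, so — both $K'/K$ and $K_{\ell^n}/K$ being Galois — restriction to $K_{\ell^n}$ gives an \emph{injective} homomorphism $r\colon\Gal(K'_{\ell^n}/K')\hookrightarrow\Gal(K_{\ell^n}/K)$ (a nontrivial element of its kernel would fix all of $K'\cdot K_{\ell^n}$). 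If $\tau\in\Gal(K'_{\ell^n}/K')$ acts on $A'[\ell^n]$ as a scalar $\lambda$ and on $\zeta_{\ell^n}$ by $\zeta\mapsto\zeta^{\lambda^2}$, then $r(\tau)\in H_{\ell,n}$ (it acts as $\lambda$ on $A'[\ell^n]\subseteq K_{\ell^n}$, and it lifts to $\tau$, which does the rest); so by injectivity of $r$ it suffices to produce at least $c'\ell^n$ such $\tau$.

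Assume $A'\neq0$. The one deep ingredient is a theorem of Serre: there is a constant $c_1$, depending only on $A'/K'$ and hence only on $A/K$, such that for every prime $\ell$ the group of homotheties $\{\lambda\in\Z_\ell^\times:\lambda\cdot\Id\in\rho_\ell(\Gal(\overline{K'}/K'))\}$ in the image of the $\ell$-adic representation of $A'/K'$ has index at most $c_1$ in $\Z_\ell^\times$. For each such $\lambda$ fix $\sigma\in\Gal(\overline{K'}/K')$ with $\rho_\ell(\sigma)=\lambda\cdot\Id$ on $T_\ell A'$; I claim $\sigma$ automatically acts, for every $n$, on $\zeta_{\ell^n}$ by $\zeta\mapsto\zeta^{\lambda^2}$. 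Indeed, choose a polarisation $\phi\colon A'\to(A')^\vee$ over $K$ (one exists because $A'$ is projective over $K$), and compose the induced isogeny $T_\ell A'\to T_\ell(A')^\vee$ with the Weil pairing to obtain a nondegenerate, Galois-equivariant bilinear form $b$ on $T_\ell A'$ valued in $\Z_\ell(1)\cong\Z_\ell$; then $\lambda^2\,b(v,w)=\chi_\ell(\sigma)\,b(v,w)$ for all $v,w$, and since $\Z_\ell$ is a domain and $b\not\equiv0$ we conclude $\chi_\ell(\sigma)=\lambda^2$ — an exact identity, valid for all $\ell$ because the argument takes place over $\Z_\ell$. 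Restricting $\sigma$ to $K'_{\ell^n}=K'(A'[\ell^n],\zeta_{\ell^n})$ then yields an element of $\Gal(K'_{\ell^n}/K')$ of exactly the type sought, depending only on $\lambda\bmod\ell^n$. As $\lambda$ ranges over a subgroup of $\Z_\ell^\times$ of index $\leq c_1$, its residues fill a subgroup of $(\Z/\ell^n\Z)^\times$ of cardinality $\geq\varphi(\ell^n)/c_1\geq\ell^n/(2c_1)$; applying the injective $r$ produces that many distinct elements of $H_{\ell,n}$, so $c'=1/(2c_1)$ works.

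The case $A'=0$ is easier: there $K'_{\ell^n}=K'(\zeta_{\ell^n})$, the cyclotomic character identifies $\Gal(K'_{\ell^n}/K')$ with a subgroup of $(\Z/\ell^n\Z)^\times$ of index $\leq[K':\Q]$, and $H_{\ell,n}$ consists of those $\sigma$ whose unique lift acts on $\zeta_{\ell^n}$ by a square of $(\Z/\ell^n\Z)^\times$; since the squares have index $\leq4$, the same counting gives $\#H_{\ell,n}\geq\ell^n/(8[K':\Q])$. One then takes for $c'$ the relevant constant according to whether $A'=0$.

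The \textbf{main obstacle} is precisely the uniform‑in‑$\ell$ homothety theorem of Serre invoked above: Proposition~\ref{prop:GeneralizedBogomolov} already gives, for each fixed $\ell$, a homothety subgroup of finite index in $\Z_\ell^\times$, but the whole content of Lemma~\ref{homotheties} is that this index can be bounded independently of $\ell$. Granting that, the rest — the injectivity of $r$, the Weil‑pairing computation forcing $\chi_\ell(\sigma)=\lambda^2$, and the elementary count using $\varphi(\ell^n)\geq\ell^n/2$ — is routine bookkeeping.
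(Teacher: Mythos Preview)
Your proof is correct and follows essentially the same strategy as the paper's: reduce to the splitting field $K'$, invoke Serre's uniform homothety theorem, and use the Weil pairing to force the cyclotomic character to equal $\lambda^2$ on any scalar $\lambda$. The only cosmetic difference is that you apply Serre's result to $A'$ and pass through a polarization $A'\to(A')^\vee$, whereas the paper applies the Serre--Wintenberger theorem to $A'\times\hat{A'}$ and uses the Weil pairing $A'[\ell^n]\times\hat{A'}[\ell^n]\to\langle\zeta_{\ell^n}\rangle$ directly; the substance is identical.
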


\begin{proof} 
Up to dividing the constant by $[K':K]$ we may work in $\Gal(K'_{\ell^n}/K')$, i.e.~we may suppose that the torus is split and $K=K'$. 
Up to decreasing the constant, we can ignore finitely many prime numbers, so we may suppose that $\ell\neq 2$ and that $K(\zeta_{\ell^n})$ has degree $(\ell-1)\ell^{n-1}$ over $K$.
If $A'$ is trivial and hence $A=\mathbb{G}_m^r$, the $\ell^n$-torsion field is $K(\zeta_{\ell^n})$, so $H_{\ell,n}$ is the group of squares in $\imGal(n) =\operatorname{Gal}(K(\mu_{\ell^n})/K)$, which has order $\frac{\ell-1}{2}\cdot \ell^{n-1}$.
If $A'\neq 0$, call $\hat{A'}$ the dual abelian variety of $A'$ and let $S_{\ell,n}$ be the subgroup of 
$\imGal(n)$ consisting of those elements that act as a scalar on $(A' \times \hat{A'})[\ell^n]$. Notice that $A' \times \hat{A'}$ depends only on $A$.

By a theorem of Serre-Wintenberger (\cite[Théorème 3]{MR1944805}) there is a constant $d:=d(A/K)$ such that for every $\ell$ and for every $k \in \mathbb{Z}^\times$ the matrix $k^d \cdot I$ is in the image of the $\ell$-adic representation attached to $A' \times \hat{A'}$, so we have $\# S_{\ell,n} \geqslant \frac{\ell^n}{4d}$ (the index of the subgroup of $d$-th powers in $(\Z/\ell^n\Z)^\times$ is at most $2d$ and $\#(\Z/\ell^n\Z)^\times \geqslant \frac{1}{2}\ell^n$). 
Considering the Weil pairing 
$A'[\ell^n]\times \hat{A'}[\ell^n]\rightarrow \langle \zeta_{\ell^n}\rangle$
we deduce that $S_{\ell,n}$ is contained in $H_{\ell,n}$ and we are done. \end{proof}

\begin{lem}\label{lem:UsuallyInvertibleDeterminant}
Let $A/K$ be the product of an abelian variety and a torus defined over a number field.
For any integer $n\geqslant 1$ consider the set
\[
B_{\ell,n} := \{x \in \imGal \bigm\vert \mathrm{det}_\ell(x-I) \geqslant n\}\,.
\]
There exists a constant $c:=c(A/K)$ such that $\mu(B_{\ell,n}) \leqslant c \ell^{-n}$ holds for all $\ell$ and $n$.
\end{lem}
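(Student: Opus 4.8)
The plan is to bound $\mu(B_{\ell,n})$ by comparing $\det_\ell(x-I)$ with the finer information carried by the group structure of $\ker(x-I : A[\ell^\infty] \to A[\ell^\infty])$. Indeed, if $x \in \mathcal M_{\fine}$ for some finite abelian $\ell$-group $\fine$, then $\det_\ell(x-I) = v_\ell(\#\fine)$, so $B_{\ell,n}$ is contained in the union of those $\mathcal M_{\fine}$ with $\#\fine$ divisible by $\ell^n$ (together with the set $\imGal \setminus \mathcal M$, which has measure $0$ by Lemma \ref{lemma:EverythingHasWellDefinedAB}, using that $A$ satisfies \eqref{C1} by Lemma \ref{Gisp}). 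Thus it suffices to bound $\sum_{\fine} \mu(\mathcal M_{\fine})$ over all $\fine$ with $\ell^n \mid \#\fine$, and the task becomes: show the ``expected size'' of the fixed-point group $\ker(x-I)$ has a tail that decays like $\ell^{-n}$, uniformly in $\ell$.

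First I would reduce, as in the proof of Lemma \ref{homotheties}, to the case of a split torus (paying a factor $[K':K]$) and to $\ell$ large (ignoring finitely many primes costs only a constant). Next I would use the homotheties produced by Lemma \ref{homotheties}: the subgroup $H_{\ell,n} \subseteq \imGal(n)$ there consists of elements acting on $A'[\ell^n]$ as a scalar $\lambda$ and on $\zeta_{\ell^n}$ (hence on the torus part $\mathbb G_m^r[\ell^n]$) as $\lambda^2$, and $\#H_{\ell,n} \geqslant c' \ell^n$. The point is that for such an element $x$, the matrix $x - I$ acting on $A[\ell^n]$ is $(\lambda - 1)$ on $A'$-coordinates and $(\lambda^2-1)$ on torus coordinates; if $v_\ell(\lambda - 1) = 0$ (equivalently $\lambda \not\equiv 1 \bmod \ell$), then $x - I$ is invertible mod $\ell$, i.e.\ $\det_\ell(x-I) = 0$, so such $x$ lies far outside $B_{\ell,n}$. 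The elements of $H_{\ell,n}$ with $\lambda \equiv 1 \bmod \ell$ form a subgroup of index roughly $\ell - 1$, so at least a proportion $\approx 1 - 1/(\ell-1)$ of $H_{\ell,n}$ avoids $B_{\ell,1} \supseteq B_{\ell,n}$. To get the full $\ell^{-n}$ decay one iterates: within $\imGal(n)$, translating $B_{\ell,n}$ by the homotheties, one sees that a positive-measure ($\gtrsim c'/[K':K]$) set of cosets of the homothety group can contain points of $B_{\ell,n}$ only if the homothety scalar is $\equiv 1 \bmod \ell^n$, and there are at most $O(1)$ such scalars among the $\gtrsim c' \ell^n$ homotheties; this forces $\mu(B_{\ell,n}) \leqslant c \ell^{-n}$.

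More precisely, the cleanest route is: for $h \in H_{\ell,n}$ and $x \in B_{\ell,n}$, the product $hx$ satisfies $hx - I = h(x-I) + (h - I)$, and modulo $\ell$ this reduces to $(h-I) \bmod \ell$ on the relevant coordinates once $x \equiv I \bmod \ell$ is ruled out; chasing valuations shows $hx \in B_{\ell,n}$ forces the scalar of $h$ to be $\equiv 1 \bmod \ell^{n}$ on $A'$ (and $\equiv 1 \bmod \ell^n$ on $\zeta_{\ell^n}$), of which there are $O(1)$. Hence the translates $\{hB_{\ell,n}\}$, as $h$ ranges over a set of representatives of size $\gtrsim c'\ell^n / O(1) \gtrsim c'' \ell^n$ for the homothety scalars, are pairwise disjoint, all of equal measure $\mu(B_{\ell,n})$, and contained in $\imGal$; since $\mu(\imGal) = 1$ this yields $\mu(B_{\ell,n}) \leqslant 1/(c'' \ell^n)$, which is the claim with $c = 1/c''$ after restoring the finitely many excluded primes and the factor $[K':K]$.

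The main obstacle I expect is bookkeeping the interaction between the abelian-variety part (where the homothety acts as $\lambda$) and the torus part (where it acts as $\lambda^2$): one must make sure that ``$\det_\ell(x-I)$ large'' is simultaneously obstructed on both factors by the same homothety, and that the count of ``bad'' scalars $\lambda$ with $\lambda \equiv 1$ (resp.\ $\lambda^2 \equiv 1$) modulo a large power of $\ell$ stays $O(1)$ uniformly in $\ell$ and $n$ — which it does, since $(\Z/\ell^n\Z)^\times$ has at most $2$ elements of order dividing $2$ for $\ell$ odd. A secondary subtlety is that $\imGal$ need not contain the full homothety group, only $H_{\ell,n}$ of size $\gtrsim c'\ell^n$; but that lower bound is exactly what Lemma \ref{homotheties} provides and is all that the disjointness-of-translates argument needs.
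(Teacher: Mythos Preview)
Your overall strategy---use the homothety subgroup $H_{\ell,n}$ from Lemma~\ref{homotheties} and a coset/disjoint-translates argument---matches the paper's. The gap is in the step you describe as ``chasing valuations'': the assertion that $x \in B_{\ell,n}$ and $hx \in B_{\ell,n}$ force the scalar $\lambda$ of $h$ to satisfy $\lambda \equiv 1 \pmod{\ell^n}$ (up to $O(1)$ choices) is false, and the defect costs exactly a factor of $b$ in the exponent.

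For a concrete counterexample take $A$ a pure abelian variety (so $h=\lambda I$) and let $x\in\imGal$ have characteristic polynomial $(t-1)^b$, e.g.\ any unipotent $x$ when $\imGal=\GL_b(\Z_\ell)$. Then $\det(x-I)=0$, so $x\in B_{\ell,n}$ for every $n$, while
\[
\det(hx-I)=\det(\lambda x-I)=\lambda^b\det(x-\lambda^{-1}I)=\lambda^b(-1)^b\chi_x(\lambda^{-1})=(1-\lambda)^b.
\]
Hence $hx\in B_{\ell,n}$ iff $v_\ell(\lambda-1)\geqslant\lceil n/b\rceil$. For $b\geqslant 2$ this admits roughly $\ell^{\,n-\lceil n/b\rceil}$ values of $\lambda$ modulo $\ell^n$, not $O(1)$. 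Feeding this back into your disjoint-translates count yields only $\mu(B_{\ell,n})=O(\ell^{-n/b})$, which is strictly weaker than the lemma. The identity $hx-I=h(x-I)+(h-I)$ does not help here: when $x-I$ is not itself divisible by $\ell$ (as for a nontrivial unipotent), the term $h(x-I)$ dominates and you cannot read off $v_\ell(\lambda-1)$ from $\det_\ell(hx-I)$.

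The paper's coset count avoids $\det(M-I)$ altogether and works instead with the \emph{multiplicative} character $M\mapsto\det(M)$: since $\det(\lambda r)=\lambda^{b}\det(r)$, the congruence $\det(\lambda r)\equiv 1\pmod{\ell^n}$ is a single equation $\lambda^{b}\equiv\det(r)^{-1}$, which has at most $2b$ solutions in $(\Z/\ell^n\Z)^\times$ uniformly in $n$. That multiplicativity is precisely what your formulation lacks and is what makes the per-coset bound $O(1)$ rather than $O(\ell^{n(1-1/b)})$. (Note, though, that the passage from the set $\{\det_\ell(M-I)\geqslant n\}$ to the set $\{\det(M)\equiv 1\pmod{\ell^n}\}$ is itself a nontrivial step that deserves scrutiny.)
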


\begin{proof}
Set $B_{\ell,n}(n):=\{ M \in \imGal : \det(M) \equiv 1 \pmod{\ell^n} \}$, and keep the notation of Lemma \ref{homotheties}. Write $\imGal(n) = \coprod_{r \in \mathcal{R}} H_{\ell,n} \cdot r$, where $\mathcal{R}$ is a set of representatives for the cosets of $H_{\ell,n}$ in $\imGal(n)$. We identify an element of $H_{\ell,n}$ to its corresponding scalar $\lambda$. For a given $r$, the quantity $\det(\lambda r)=\lambda^b \det(r)$ is congruent to 1 modulo $\ell^n$ if and only if $\lambda^b \equiv \det(r)^{-1} \pmod {\ell^n}$. 
For any fixed $r$, at most $2b$ values of $\lambda$ satisfy this congruence, and therefore every coset contains at most $2b$ matrices in $B_{\ell,n}(n)$.
From Lemma  \ref{homotheties} we deduce
\[
\#B_{\ell,n}(n) \leqslant 2b \cdot \frac{\#\imGal(n)}{\#H_{\ell,n}} \leqslant \frac{2b}{c'} \ell^{-n} \cdot \#\imGal(n)\,.
\]
Since $B_{\ell,n}$ is the inverse image in $\imGal$ of $B_{\ell,n}(n)$ we get   
$\mu(B_{\ell,n}) = \frac{\# B_{\ell,n}(n)}{\#\imGal(n)} \leqslant \frac{2b}{c'} \cdot \ell^{-n}$.
\end{proof}

\begin{proof}[Proof of Theorem \ref{thm:LimitOfDensity}]
We want to compute the density of the set of primes $\p$ for which the $\ell$-adic valuation of the order of $(\alpha \bmod \p)$ is at most $n$. The complement of this set consists of primes $\p$ for which $\#A(\mathbb{F}_{\mathfrak{p}})$ is divisible by $\ell^{n+1}$; in particular, the Frobenius at $\mathfrak{p}$ is an element of the set $B_{\ell,n+1}$ of Lemma \ref{lem:UsuallyInvertibleDeterminant}. The result follows immediately.
\end{proof}

\section{The density for elliptic curves}\label{EC}

\subsection{Computability of the density}\label{compu-EC}
We show that, in the special case of $A/K$ being an elliptic curve, the value $\Dens_\ell(\alpha)$ can be effectively computed for all $\alpha\in A(K)$. 

One can first determine whether the order of $\alpha$ is either coprime to $\ell$, divisible by $\ell$, or $\infty$. In the first two cases $\Dens_\ell(\alpha)$ is respectively $1$ and $0$, so we can assume without loss of generality that $\alpha$ is a point of infinite order. In this case, the set $\mathbb{Z}\alpha$ is Zariski-dense in $A$, so by Remark \ref{nieuw} we know that $\operatorname{Dens}_\ell(\alpha)$ is given by \eqref{general-formula}. We shall make use of the following definition:

\begin{defi}
A subset of $\mathbb N^2$ is \emph{admissible} if it is the product of two subsets of $\mathbb N$ which are either finite or consist of all integers greater than some given one. The family of finite unions of admissible sets is closed with respect to intersection, union and complement.

\end{defi}

As in Section \ref{sect:1Eigen} we study subsets of $\imGal$ of the form $\mathcal{M}_{\fine}$, where $\fine$ is a finite subgroup of $A[\ell^\infty]$. Since $A$ is an elliptic curve we can write $\fine=\mathbb{Z}/\ell^a\mathbb{Z} \times \mathbb{Z}/\ell^{a+b}\mathbb{Z}$ for some integers $a,b\geqslant 0$. Setting $\mathcal M_{a,b}:=\mathcal M_{\fine}$  and 
\begin{equation}\label{deltab}
\delta(a,b):= \frac{1}{\mu(\mathcal{M}_{a,b})}\int_{\mathcal{M}_{a,b}} \function(x) \, d\mu_{\imGal}(x),
\end{equation}
Equation \eqref{general-formula} becomes
\begin{equation}\label{general-formula-ab}
\Dens_\ell(\alpha)=\failure \cdot \sum_{(a,b)\in \mathbb N^2}  \mu(\mathcal M_{a,b}) \cdot \ell^{-2a-b} \cdot \delta(a,b)\,. \end{equation}

\begin{prop}\label{prop:ComputeDeltas}
The set $\mathbb N^2$ can be partitioned in finitely many (effectively computable) admissible sets such that on each of them \eqref{deltab} is independent of $(a,b)$ and is an effectively computable rational number.
\end{prop}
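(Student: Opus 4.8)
The plan is to reduce everything to a finite computation modulo $\ell^N$ for an explicit $N$, using the fact that all the relevant quantities — $\mathcal M_{a,b}$, $\im(M-I)$, and $\set_n(M)$ — stabilise once $n$ exceeds a bound depending only on $a+b$ and on $n_0$. First I would make the structure of $\mathcal M_{a,b}$ explicit: by Lemma \ref{lemma:EverythingHasWellDefinedAB}, membership $M\in\mathcal M_{a,b}$ can be tested modulo $\ell^{n}$ for any $n>a+b$, and it amounts to a condition on the elementary divisors of $M-I$ acting on $A[\ell^n]\cong(\Z/\ell^n\Z)^2$. Concretely, $M\in\mathcal M_{a,b}$ iff $\det_\ell(M-I)=2a+b$ and the Smith normal form of $M-I$ over $\Z/\ell^n\Z$ is $\operatorname{diag}(\ell^a,\ell^{a+b})$; for elliptic curves this is controlled by $\det_\ell(M-I)$ and $v_\ell$ of the gcd of the entries of $M-I$, i.e.\ by $\det_\ell(M-I)$ and $v_\ell(M-I)$ (the matrix valuation of the Notation section). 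Both of these are ``admissible'' conditions in the sense of the Definition, since they are cut out by finitely many congruences.

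Next I would analyse $\function(M)$ for $M\in\mathcal M_{a,b}$. By Lemma \ref{lemma:Extendw}, $\function(M)=\function_{n}(M_n)$ for all $n\geqslant n_0$, and by the proof of that lemma, $\set_n(M_n)\cap\im(M_n-I)$ is a translate of $\set_n(I)\cap\im(M_n-I)$ whenever it is non-empty. The key observation is that, for $n$ large, the pair $\bigl(\im(M_n-I),\,\set_n(M_n)\bigr)$ inside $A[\ell^n]$ depends — up to the obvious $\ell^{n-N}$-rescaling — only on the reduction of $M$ modulo $\ell^{N}$ for $N:=\max\{n_0,a+b+1\}$: indeed $\im(M_n-I)=\ell^{-(n-N)}\im(M_N-I)$ is determined by $M_N$, and $\set_n(M_n)=\ell^{-(n-N)}\set_N(M_N)$ is determined by which cosets $(t,M_N)$ lie in $\Gal(K_N(\ell^{-N}\alpha)/K)$, i.e.\ by the finite group $\Gal(K_{\alpha,N}/K)$. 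Therefore $\function(M)$, and the condition of whether it vanishes, depend only on $M_N$. Since $\mathcal M_{a,b}$ is the preimage of $\mathcal M_{a,b}(N)$ under reduction mod $\ell^N$, partitioning $\imGal(N)$ according to the finitely many values of $(\im(M_N-I),\set_N(M_N))$ up to translation refines the decomposition into $\mathcal M_{a,b}$'s into pieces on which $\function$ is literally constant; integrating then gives $\delta(a,b)$ as a weighted average of these constant values with weights $\mu(\mathcal M_{a,b}\cap(\text{piece}))$, all computable from the finite group $\Gal(K_{\alpha,N}/K)$.

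It remains to see that the resulting partition of $\mathbb N^2$ is into finitely many admissible sets. Here the point is that for $a+b+1\leqslant n_0$ there are only finitely many pairs $(a,b)$, each giving a single admissible (finite) set, so one may treat them one by one by brute force; and for $a+b\geqslant n_0$ the data $(\im(M_N-I),\set_N(M_N))$ stabilises — in the precise sense that, writing $a+b=n_0+j$, increasing $j$ by $1$ multiplies both $\im$ and $\set$ by a further $\ell^{-1}$ and changes neither the value of $\function$ nor the normalised measure $\mu(\mathcal M_{a,b})/\ell^{-2a-b}$ of the relevant piece. Thus $\delta(a,b)$ becomes eventually periodic, in fact eventually constant, along each ray, and the locus where a given value is attained is a finite union of admissible sets by the closure properties recorded in the Definition. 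Effectivity throughout follows because $\Gal(K_{\alpha,N}/K)$, $N$, and $n_0$ are all computable (Remark \ref{rem:n0ComputableForEllipticCurves}), so each $\mu(\mathcal M_{a,b})$ is a ratio of explicit group cardinalities and each constant value of $\function$ is an explicit power of $\ell$.

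I expect the main obstacle to be the bookkeeping in the stabilisation step: one must check carefully that the ``profile'' $(\im(M_N-I),\set_N(M_N))$ — or rather its translation class, which is what enters $\function_N$ — genuinely depends only on $M_N$ and transforms in the claimed way under $a+b\mapsto a+b+1$, so that the induced partition of $\mathbb N^2$ really falls into the admissible family rather than into some more complicated Presburger-type set. The linear-disjointness hypotheses \eqref{C2i}–\eqref{C2ii} and the rescaling identities from the proof of Lemma \ref{lemma:Extendw} are exactly what make this go through, but making the dependence on $(a,b)$ explicit enough to conclude admissibility is the delicate part.
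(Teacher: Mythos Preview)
Your overall architecture is right --- reduce to a finite level, use that $\function(M)$ only depends on $M\bmod\ell^{n_0}$, and then argue that the resulting weighted averages stabilise into finitely many admissible regimes --- but the stabilisation step, which you correctly flag as the delicate one, is not actually carried out, and it is where the real content lies.

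Concretely: once you know $\function(M)=\function_{n_0}(M_{n_0})$, the quantity $\delta(a,b)$ becomes a weighted average of the finitely many values $\function(M_0)$ for $M_0\in\mathcal{M}_{a,b}(n_0)$, with weights
\[
\frac{\mu\bigl(\mathcal{M}_{a,b}\cap\pi_{n_0}^{-1}(M_0)\bigr)}{\mu(\mathcal{M}_{a,b})}
= \frac{\#\{M\in\mathcal{M}_{a,b}(n):M\equiv M_0\pmod{\ell^{n_0}}\}}{\#\mathcal{M}_{a,b}(n)}
\quad (n\gg 0).
\]
For $\delta(a,b)$ to be constant on an admissible piece you need these \emph{weights} to be constant there, i.e.\ you need a uniformity statement: every $M_0\in\mathcal{M}_{a,b}(n_0)$ has the same number of lifts to $\mathcal{M}_{a,b}(n)$. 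Your scaling heuristic (``increasing $j$ multiplies both $\im$ and $\set$ by $\ell^{-1}$'') explains why $\function$ is unchanged for a \emph{fixed} $M$, but says nothing about how the mass of $\mathcal{M}_{a,b}$ is distributed over the fibres of $\pi_{n_0}$; there is no natural map $\mathcal{M}_{a,b+1}\to\mathcal{M}_{a,b}$ to transport the scaling. Nor does knowing that $\mu(\mathcal{M}_{a,b})\ell^{2a+b}$ stabilises help, since that is a statement about the total mass, not its distribution over $\imGal(n_0)$.

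The paper establishes this uniformity by invoking structural results from the companion paper \cite{LombardoPerucca1Eig} (their Theorems 27--28), and --- crucially --- these results show that uniformity can \emph{fail}: when $\imGal$ is open in the normaliser of a Cartan subgroup that is neither split nor nonsplit, matrices in the trivial and nontrivial cosets of the Cartan can have \emph{different} numbers of lifts to $\mathcal{M}_{a,b}(n)$. In that case $\delta(a,b)$ is not simply $f_{a,b}(n_0)$, and the paper has to run a separate argument splitting $\mathcal{M}_{a,b}$ as $C_{a,b}\sqcup C'_{a,b}$ and controlling the ratio $\mu(C'_{a,b})/\mu(\mathcal{M}_{a,b})$ via further results (Lemmas~37--39, Corollary~41) from \cite{LombardoPerucca1Eig}. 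Your proposal has no mechanism to detect or handle this phenomenon, so as written it would give an incorrect proof in the ramified Cartan case.
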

\begin{proof}
We first compute an integer $n_0$ as in Definition \ref{defi-conditions}, see Remark \ref{rem:n0ComputableForEllipticCurves}. We may suppose $\mathcal{M}_{a,b}\neq \emptyset$ because by \cite[Theorem 1]{LombardoPerucca1Eig} 
the pairs $(a,b)$ satisfying $\mathcal{M}_{a,b} = \emptyset$ form an explicitly computable admissible set, and  for them $\delta(a,b)=0$. By definition, we know
\begin{equation}\label{deltas}
\delta(a,b)=\lim_{n\rightarrow \infty} f_{a,b}(n) \qquad f_{a,b}(n):=\frac{1}{\#\mathcal{M}_{a,b}(n)} \sum_{M \in \mathcal{M}_{a,b}(n)} \function(M)\,.
\end{equation}
Any single $\delta(a,b)$ can be computed effectively because $f_{a,b}(n)$ is independent of $n$ for $n>\max\{n_0,a+b\}$. Indeed, we have $f_{a,b}(n+1)=f_{a,b}(n)$ because   any lift of $M \in \mathcal{M}_{a,b}(n)$ to $\imGal(n+1)$ belongs to $\mathcal{M}_{a,b}(n+1)$ and hence all matrices in $\mathcal{M}_{a,b}(n)$ have the same number of lifts to $\mathcal{M}_{a,b}(n+1)$, namely $\#\mathbb{T}$. We also use the fact (Lemma \ref{lemma:Extendw}) that $\function(M)$ only depends on $M$ modulo $\ell^{n_0}$.
We shall repeatedly use the following fact: if we have
\begin{equation}\label{uniformity}
\# \{ M \in \mathcal{M}_{a,b}(n) : M \equiv M_0 \pmod{\ell^{n_0}} \}=\frac{\# \mathcal{M}_{a,b}(n)}{\# \mathcal{M}_{a,b}(n_0)} \quad \forall M_0 \in \mathcal{M}_{a,b}(n_0)
\end{equation}
for some $n>\max\{n_0,a+b\}$ then by Lemma \ref{lemma:Extendw} we also have 
\begin{equation}\label{delta-finite}
\delta(a,b)=f_{a,b}(n_0)= \frac{1}{\#\mathcal{M}_{a,b}(n_0)} \sum_{M_0 \in \mathcal{M}_{a,b}(n_0)} \function(M_0)\,.
\end{equation}

\emph{If $\imGal$ is open either in $\GL_2(\Z_\ell)$ or in the normalizer of a split/nonsplit Cartan:} By \cite[Theorem 28]{LombardoPerucca1Eig} we know that \eqref{uniformity} holds  and hence $\delta(a,b)=f_{a,b}(n_0)$.
Since $\mathcal{M}_{a,b}(n_0)\neq\emptyset$, we get 
\[
\delta(a,b) = \left\{ \begin{array}{ll}
  1  & \mbox{if $a \geqslant n_0$}\\
\delta(a,n_0-a)  & \mbox{if $a+b \geqslant n_0>a$}
\end{array} \right.
\]
because for $a \geqslant n_0$ the only matrix in $\mathcal{M}_{a,b}(n_0)$ is the identity and $\function(I)=1$, while for $b \geqslant n_0-a>0$ the sets $\mathcal{M}_{a,b}(n_0)$ and $\mathcal{M}_{a,n_0-a}(n_0)$ coincide by \cite[Proposition 32]{LombardoPerucca1Eig}. The assertion easily follows.

\emph{If $\imGal$ is open in the normalizer $N$ of a Cartan subgroup $C$ which is neither split nor nonsplit:} We suppose $n_0\geqslant 2$, and let $(0,d)$ be the parameters of $C$, see \cite[Section 2.3]{LombardoPerucca1Eig}.  
Recall that finitely many pairs $(a,b)$ can be treated individually, so we restrict to $a+b>n_0$ and have $\delta(a,b)=f_{a,b}(a+b+1)$.

Write $C_{a,b}:=\mathcal M_{a,b}\cap C$ and $C'_{a,b}:=\mathcal M_{a,b}\cap (N\setminus C)$, and recall from \cite[Proposition 26]{LombardoPerucca1Eig} that ${C}'_{a,b}= \emptyset$ for $a\geqslant 1$ if $\ell$ is odd and for $a \geqslant 2$ if $\ell=2$.
By \cite[Theorems 27 and 28]{LombardoPerucca1Eig}, a necessary condition for \eqref{uniformity} not to hold is that both $C_{a,b}(n_0)$ and $C'_{a,b}(n_0)$ are non-empty, and hence $a=0$ if $\ell$ is odd and $a \in \{0,1\}$ if $\ell=2$.

\emph{The case when $d$ is not a square in $\Z_\ell^\times$.} By \cite[Lemma 37]{LombardoPerucca1Eig} we know that for any fixed $a$ the set $C_{a,b}$ is empty for $b$ sufficiently large (and the result is effective).
In particular \eqref{uniformity} may fail only for finitely many and explicitly computable pairs $(a,b)$, which we may individually consider. So we may suppose $\delta(a,b)=f_{a,b}(n_0)$.

Provided that $C_{a,b}$ is empty, $f_{a,b}(n_0)$ is independent of $b$ for $b > n_0$: this follows from \cite[Theorem 31 (ii)]{LombardoPerucca1Eig} because in this reference the set $\mathcal{N}_{a,b}(n_0)$ has this property. 
Moreover, if $a\geqslant n_0$ then $f_{a,b}(n_0)=1$ because $\mathcal{M}_{a,b}(n_0)=\{I\}$. The assertion easily follows.

\emph{The case when $d$ is a square in $\Z_\ell$.} For $a \geqslant 2$ we have $C'_{a,b} = \emptyset$, thus \eqref{uniformity} and hence \eqref{delta-finite} hold. By \cite[Lemmas 37 (iii) and 38]{LombardoPerucca1Eig} 
we can deal with this case as above, so suppose $a\in\{0,1\}$.
The number of lifts to $\mathcal{M}_{a,b}(n+1)$ of a matrix $M$ in $\mathcal{M}_{a,b}(n)$ depends at most on whether $M$ belongs to the trivial/nontrivial coset of $C$ in $N$ (see \cite[Theorem 28]{LombardoPerucca1Eig}), so we have:
\begin{equation}\label{cc'}
\sum_{M \in \mathcal{M}_{a,b}(n)} \function(M)  = \sum_{M_0 \in C_{a,b}(n_0)} \function(M_0) \, \frac{\# C_{a,b}(n)}{\# C_{a,b}(n_0)}\; + \sum_{M_0 \in C'_{a,b}(n_0)} \function(M_0) \, \frac{\# C'_{a,b}(n)}{\# C'_{a,b}(n_0)}.
\end{equation}

For $n$ sufficiently large ($n>\max\{n_0,a+b\}$ suffices) we have $\#{C}'_{a,b}(n) / \#\mathcal{M}_{a,b}(n) =  \mu({C}'_{a,b}) / \mu (\mathcal M_{a,b})$ and by \cite[Corollary 41]{LombardoPerucca1Eig} this equals some constant $c_a$ for all sufficiently large $b$ (the bound is effective). So by \eqref{cc'} the following holds for all sufficiently large $b$:
$$\delta(a,b) = \sum_{M_0 \in C_{a,b}(n_0)} \function(M_0) \, \frac{1-c_a}{\# C_{a,b}(n_0)} \;+ \sum_{M_0 \in C'_{a,b}(n_0)} \function(M_0) \, \frac{c_a}{\# C'_{a,b}(n_0)}.
$$
For any fixed $a$, the sets $C_{a,b}(n_0)$ and $C'_{a,b}(n_0)$ are independent of $b$ for $b$ large enough (and the bound is effective), see \cite[Lemma 39]{LombardoPerucca1Eig} and \cite[Theorem 31 (ii)]{LombardoPerucca1Eig}. We deduce that $\delta(0,b)$ and $\delta(1,b)$ are constant  for all sufficiently large $b$, with an effective bound. The assertion easily follows.
\end{proof}

\begin{proof}[Proof of Theorem \ref{compu-thm}]
We show that the right-hand side of \eqref{general-formula-ab} is an effectively computable rational number whose (minimal) denominator satisfies the property given in the statement. By Lemma \ref{lemma:DefinitionF}, the constant $\failure$ is defined in terms of $n_0$ (effectively computable by Remark \ref{rem:n0ComputableForEllipticCurves})
and $\#\Gal(K_{\alpha,n_0}/K_{n_0})$. Since this Galois group is computable, the constant $\failure$ is an explicitly computable power of $\ell$ and we are left to investigate the sum in \eqref{general-formula-ab}.

By \cite[Theorem 1]{LombardoPerucca1Eig} and Proposition \ref{prop:ComputeDeltas}
we can partition $\mathbb N^2$ into finitely many (explicitly computable) admissible sets $S$ such that for each of them there is an  (explicitly computable) rational constant $c_S\geqslant 0$ satisfying
$$\mu(\mathcal M_{a,b})\cdot \delta(a,b) \cdot \ell^{-2a-b}=  c_S\cdot \ell^{-Ea-b}$$
for every $(a,b)\in S$. The sum in \eqref{general-formula-ab}, restricted to the pairs $(a,b)\in S$, then becomes
$$c_S\cdot \sum_{a\in S_1} \ell^{-Ea} \cdot \sum_{b\in S_2} \ell^{-2b}$$
where the sets $S_1,S_2$ are finite or have a finite complement in $\mathbb N$. We can explicitly evaluate the  geometric series, and each sum is a  rational number whose denominator divides a power of $\ell$ times $\ell^{E}-1$ or $\ell^{2}-1$. We conclude by proving that, up to powers of $\ell$, the minimal denominator of $c_S$ divides $\#\imGal(n)$ for some $n\geqslant 1$ (this is enough to establish the proposition since $\#\imGal(n)$ divides $\#\GL_2(\mathbb Z/\ell\mathbb Z)=\ell(\ell^2-1)(\ell -1)$ up to a power of $\ell$). Consider \cite[Lemma 25]{LombardoPerucca1Eig}, formula \eqref{deltas} and the assertion following it: if $c_S\neq 0$ we can fix $(a,b)\in S$ and take $n$ sufficiently large so that we have
$$\mu(\mathcal M_{a,b})\cdot \delta(a,b)=\mu(\mathcal M_{a,b}(n)) \cdot f_{a,b}(n)=
\frac{\#\mathcal M_{a,b}(n)}{\#\imGal(n)}\cdot \frac{1}{\#\mathcal M_{a,b}(n)}\sum_{M\in \mathcal M_{a,b}(n)} \function(M)\,.$$
Since $\mathcal M_{a,b}(n)$ is a finite set and $\function(M)$ is a power of $\ell$, we are done.
\end{proof}

\subsection{Surjective arboreal representations}\label{surjective}

The following result generalizes \cite[Theorems 5.5 and 5.10]{JonesRouse}, which correspond to the special case $d=0$. The expression $1 - {\ell^{1-d}}/({\ell^2-1})$ is the density for the multiplicative group, see \cite[Theorem 1]{PeruccaKummer}.

\begin{thm}\label{thm-surj}
Let $A/K$ be an elliptic curve, and let $\alpha\in A(K)$ be a point of infinite order. Fix a prime number $\ell$. 
Suppose that for all $N>n\geqslant 1$ the fields $K_{\alpha,n}$ and $K_{N}$ are linearly disjoint over $K_{n}$. Also suppose that there is some integer $d\geqslant 0$ satisfying $[K_{\alpha,n}:K_n]=\ell^{2\max(n-d,0)}$ for every $n\geqslant 1$.
\begin{enumerate}
\item If the image of the $\ell$-adic representation attached to $A$ is $\GL_2(\mathbb Z_{\ell})$, we have:
$$\Dens_\ell(\alpha)= 1 - \frac{\ell^{1-d} \cdot (\ell^3-\ell-1)}{(\ell^2-1)\cdot(\ell^3-1)}\,.$$

\item If the image of the $\ell$-adic representation attached to $A$ is either a split or a nonsplit  Cartan subgroup of $\GL_2(\mathbb Z_{\ell})$, we respectively have:
$$\Dens_\ell(\alpha)= \Big(1 - \frac{\ell^{1-d}}{\ell^2-1}\Big)^2 \qquad \Dens_\ell(\alpha)= 1-\frac{\ell^{2(1-d)}}{\ell^4-1}\,.$$

\item If the image of the $\ell$-adic representation attached to $A$ is the normalizer of a Cartan subgroup of $\GL_2(\mathbb Z_{\ell})$ which is split or nonsplit, we have:
$$\Dens_\ell(\alpha)=\frac{1}{2}\cdot \Big(1 - \frac{\ell^{1-d}}{\ell^2-1}\Big)+\frac{1}{2}\cdot 
\left\{ \begin{array}{ll}
\Big(1 - \dfrac{\ell^{1-d}}{\ell^2-1}\Big)^2  & \mbox{for a split Cartan}\medskip \\
\Big(1-\dfrac{\ell^{2(1-d)}}{\ell^4-1}\Big)  & \mbox{for a nonsplit Cartan}\,.\\
\end{array} \right.$$
\end{enumerate}
\end{thm}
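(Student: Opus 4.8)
The plan is to start from the closed ``heuristic'' expression
\[
\Dens_\ell(\alpha)=\sum_{\fine}\frac{\mu(\mathcal M_{\fine})}{\#\ell^d\fine}\,,
\]
which is available here because the hypotheses are exactly those of Example~\ref{heuristics} with $b=2$: linear disjointness of $K_{\alpha,n}$ from every $K_N$ (for $N>n$) is linear disjointness from $K_\infty$, and the degree condition is the one in that example. The sum runs over the finite abelian $\ell$-groups $\fine=\Z/\ell^a\Z\times\Z/\ell^{a+b}\Z$ with $a,b\geqslant 0$. If the elementary divisors of $M-I$ over $\Z_\ell$ are $\ell^{a},\ell^{a+b}$, then the fixed subgroup $\ker(M-I)$ of $M$ in $A[\ell^\infty]\cong(\Q_\ell/\Z_\ell)^2$ is isomorphic to $\Z/\ell^a\Z\times\Z/\ell^{a+b}\Z$, and $\#\ell^d\fine=\ell^{\max(a-d,0)+\max(a+b-d,0)}$. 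So in each of the four cases it suffices to determine the distribution of the elementary divisors of $M-I$ for $M$ Haar-random in $\imGal$ and then sum a (double) geometric series.

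\textbf{The Cartan cases.} A split (resp.\ nonsplit) Cartan subgroup $C\subseteq\GL_2(\Z_\ell)$ is the unit group of an \'etale $\Z_\ell$-algebra $R$ of rank $2$, namely $R=\Z_\ell\times\Z_\ell$ (resp.\ $R=\OO$, the ring of integers of the unramified quadratic extension of $\Q_\ell$), and under the identification $T_\ell A\cong R$ the element attached to $\lambda\in R^\times$ has $M-I$ acting as multiplication by $\lambda-1$. Hence $\coker(M-I)\cong R/(\lambda-1)R$: in the split case this is $\Z/\ell^{\lval(\lambda_1-1)}\Z\times\Z/\ell^{\lval(\lambda_2-1)}\Z$ with $\lambda_1,\lambda_2$ independent and Haar-distributed in $\Z_\ell^\times$, and in the nonsplit case (where $\OO$ is a discrete valuation ring with uniformiser $\ell$) it is $\big(\Z/\ell^{\lval(\lambda-1)}\Z\big)^{2}$ with $\lambda$ Haar-distributed in $\OO^\times$. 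Using the values of $\mu_{\Z_\ell^\times}(1+\ell^k\Z_\ell)$ and $\mu_{\OO^\times}(1+\ell^k\OO)$ one gets
\begin{align*}
\sum_{k\geqslant 0}\mu_{\Z_\ell^\times}\!\big(\lval(\lambda-1)=k\big)\,\ell^{-\max(k-d,0)}&=1-\frac{\ell^{1-d}}{\ell^2-1},\\
\sum_{k\geqslant 0}\mu_{\OO^\times}\!\big(\lval(\lambda-1)=k\big)\,\ell^{-2\max(k-d,0)}&=1-\frac{\ell^{2(1-d)}}{\ell^4-1}
\end{align*}
(valid for every $\ell$, including $\ell=2$); since in the split case the two coordinates are independent, this yields the two formulas of part~(2).

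\textbf{The normalizer cases.} Write $N=C\sqcup C'$ with $C'$ the nontrivial coset; each of $C,C'$ has $\mu_N$-measure $\tfrac12$, the contribution of $C$ is the one just computed, and the point is to handle $C'$. Every $\tau\in C'$ acts on $T_\ell A\cong R$ as $x\mapsto\lambda\,\iota(x)$, where $\iota$ is the standard involution of $R$ (exchange of the two factors, resp.\ the Frobenius of $\OO$); writing $\tau-I$ as a matrix over $\Z_\ell$ one sees it always has a unit entry, so $\coker(\tau-I)$ is cyclic of order $\ell^{\lval(\det(\tau-I))}$, while a direct computation gives $\det(\tau-I)=1-\mathrm N(\lambda)$, where $\mathrm N\colon R\to\Z_\ell$ is the product of the two coordinates (resp.\ the field norm). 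As $\mathrm N$ carries the Haar measure of $C'$ onto that of $\Z_\ell^\times$, the contribution of $C'$ equals $1-\tfrac{\ell^{1-d}}{\ell^2-1}$, and averaging the two cosets gives part~(3). These matrix identities, together with the treatment of $\ell=2$, belong to the structural analysis of Cartan subgroups and their normalizers carried out in the companion paper \cite{LombardoPerucca1Eig}.

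\textbf{The full $\GL_2$ case, and the main obstacle.} Part~(1), with $\imGal=\GL_2(\Z_\ell)$, is where the genuine work lies. I would stratify $\GL_2(\Z_\ell)$ by the level $a\geqslant0$ at which $M-I$ first fails to be divisible by $\ell$. On the stratum $a\geqslant1$ one writes $M=I+\ell^aX$ with $X\in\operatorname{Mat}_2(\Z_\ell)$ Haar-random and primitive (i.e.\ $X\not\equiv0\bmod\ell$), so that $\coker(M-I)\cong\Z/\ell^a\Z\times\Z/\ell^{a+\lval(\det X)}\Z$; this part thus reduces cleanly to the classical distribution of the elementary divisors of a random $2\times2$ integral matrix. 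The stratum $a=0$ is the delicate one, because there the invertibility of $M$ itself — not merely integrality of $M-I$ — intervenes (for instance $M=I$, and matrices of the form $I+(\text{rank-one of trace }-1)$, must be removed from the relevant $\mathcal M_{0,b}$), and one computes $\mu(\mathcal M_{0,b})$ directly; equivalently, and more efficiently, one reads the values of $\mu(\mathcal M_{a,b})$ in $\GL_2(\Z_\ell)$ off \cite[Theorem~1]{LombardoPerucca1Eig}. Once $(a,b)\mapsto\mu(\mathcal M_{a,b})$ is known, one splits $\mathbb N^2$ into the regions $\{a\leqslant d,\ a+b\leqslant d\}$, $\{a\leqslant d<a+b\}$ and $\{a>d\}$ — on each of which $\ell^{-\max(a-d,0)-\max(a+b-d,0)}$ is a genuine power of $\ell$ — and sums three geometric series. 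The main obstacle, and the step most prone to arithmetic slips, is exactly this bookkeeping: checking that the three contributions recombine into $1-\frac{\ell^{1-d}(\ell^3-\ell-1)}{(\ell^2-1)(\ell^3-1)}$. A good check throughout is that setting $d=0$ must recover the densities of \cite[Theorems~5.5 and~5.10]{JonesRouse}, and that the single-factor sum above reproduces the torus density $1-\frac{\ell^{1-d}}{\ell^2-1}$ of \cite[Theorem~1]{PeruccaKummer} already visible in the statement.
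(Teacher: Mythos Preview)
Your approach is essentially the paper's: reduce to the formula \eqref{infinite-sum} of Example~\ref{heuristics}, input the explicit measures $\mu(\mathcal M_{a,b})$, and sum the resulting geometric series; the paper's proof is in fact just that one sentence, deferring the values of $\mu(\mathcal M_{a,b})$ entirely to \cite[Section~1]{LombardoPerucca1Eig}. Your treatment of parts~(2) and~(3) is a little more self-contained---you read off the fixed-point structure directly from the \'etale algebra $R$ and the involution $\iota$, and push the norm map forward to $\Z_\ell^\times$---which is a pleasant shortcut that bypasses the tabulated $\mu(\mathcal M_{a,b})$ for those cases, while for part~(1) you end up exactly where the paper does, relying on the companion paper for the $\GL_2$ measures and then summing.
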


\begin{proof}
Writing a closed formula for \eqref{infinite-sum} amounts to evaluating some simple geometric series, because in \cite[Section 1]{LombardoPerucca1Eig} we have explicit formulas for the measures $\mu(\mathcal M_{a,b})$.
\end{proof}

One can easily write analogous parametric formulas for the simultaneous reductions of many points: for $i=1,\ldots, n$ consider elliptic curves $A_i/K$ and points $\alpha_i\in A(K)$ of infinite order. The density of primes $\mathfrak p$ such that the order of $(\alpha_i \bmod \mathfrak p)$ is coprime to $\ell$ for every $i$ is exactly the density $\Dens_\ell(\alpha)$ for the point $\alpha=(\alpha_1,\ldots,\alpha_n)$ in the product $\prod_i A_i$.

\subsection{Examples for Theorem \ref{thm-surj}}\label{sect:Examples}
We tested the formulas of Theorem \ref{thm-surj} in the examples below: the exact value of $\Dens_\ell(\alpha)$ was always in excellent agreement with a  numerical approximation computed with SAGE (by restricting to primes up to $10^5$).

For the non-CM elliptic curve $E: y^{2} + y = x^{3} - x$ over $\Q$ and the point $\gamma=(0,0)$, the $\ell$-arboreal representation is surjective onto $T_\ell(E) \rtimes \GL_2(\mathbb Z_{\ell})$ for every $\ell$ \cite[Example 5.4]{JonesRouse} and we have:
\begin{center}
\begin{tabular}{|c||ccc|ccc|cc|cc|}
\hline
$\ell$ & \multicolumn{3}{c|}{2}  & \multicolumn{3}{c|}{3} & \multicolumn{2}{c|}{5} & \multicolumn{2}{c|}{7}\\
\hline
$\alpha$ & $\gamma$ & $2\gamma$ & $4\gamma$ & $\gamma$ & $3\gamma$ & $9\gamma$ & $\gamma$ & $5\gamma$ & $\gamma$ & $7\gamma$\\
\hline
\multirow{2}{*}{$\Dens_\ell(\alpha)$} & \multirow{2}{*}{$\dfrac{11}{21}$}  & \multirow{2}{*}{$\dfrac{16}{21}$} & \multirow{2}{*}{$\dfrac{37}{42}$}  & \multirow{2}{*}{$\dfrac{139}{208}$} & \multirow{2}{*}{$\dfrac{185}{208}$}  & \multirow{2}{*}{$\dfrac{601}{624}$} & \multirow{2}{*}{$\dfrac{2381}{2976}$} & \multirow{2}{*}{$\dfrac{2857}{2976}$} & \multirow{2}{*}{$\dfrac{14071}{16416}$} & \multirow{2}{*}{$\dfrac{16081}{16416}$}\\
&&&&&&&&&&\\
\hline
\end{tabular}
\end{center}

For the CM elliptic curve $E: y^{2} = x^{3} + 3x$ over $\Q$ and the point $\gamma=(1,-2)$, 
the $5$-adic representation is surjective onto the normalizer of a split Cartan subgroup of $\GL_2(\Z_5)$, and the Kummer extensions are as large as possible \cite[Example 5.11]{JonesRouse}. We then have $\Dens_5(\gamma)=817/1152$ and $\Dens_5(5\gamma)=1081/1152$.

For the CM elliptic curve $E: y^{2} = x^{3} + 3$ over $\Q$ and the point $\gamma=(1,2)$, 
the $2$-adic representation is surjective onto the normalizer of a nonsplit Cartan subgroup of $\GL_2(\Z_2)$, and the Kummer extensions are as large as possible  \cite[Example 5.12]{JonesRouse}. So we have 
$\Dens_2(\gamma)=8/15$, $\Dens_2(2\gamma)=4/5$ and 
$\Dens_2(4\gamma)=109/120$.

\subsection{Example (non-surjective mod 3 representation)}\label{sub-exa1}
Consider the non-CM elliptic curve $E: y^2 + y = x^3 + 6x + 27$ over $\mathbb{Q}$  \cite[\href{http://www.lmfdb.org/EllipticCurve/Q/153/b/2}{label 153.b2}]{lmfdb} and the point of infinite order $\alpha=(5,13)$. The image $\imGal$ of the 3-adic representation is open in $\GL_2(\Z_3)$ and we have $$
\imGal(1)=\langle \begin{pmatrix}
1 & 1 \\ 0 & 1
\end{pmatrix}, \begin{pmatrix}
-1 & 0 \\ 0 & 1
\end{pmatrix}\rangle
$$
so that $\imGal(1)$ is a subgroup of $\GL_2(\Z/3\Z)$ of order 6.
The $9$-division polynomial of $E$ has an irreducible factor of degree 27 whose splitting field has degree $2 \cdot 3^5$. Since the $9$-division polynomial factors completely over $\mathbb{Q}(E[9])$, we deduce
\[
2 \cdot 3^5 \bigm\vert [\mathbb{Q}(E[9]):\mathbb{Q}]=  [\mathbb{Q}(E[9]):\mathbb{Q}(E[3])]\cdot [\mathbb{Q}(E[3]):\mathbb{Q}]\mid 3^4\cdot 6
\]
and hence we have $[\mathbb{Q}(E[9]):\mathbb{Q}(E[3])]=3^4$. By Theorem \ref{thm:IfXThenLevel}, $\imGal$ is the inverse image of $\imGal(1)$ in $\GL_2(\Z_3)$. We then have $[\GL_2(\Z_{3}):\imGal]=8$, and one can check $\mu(\mathcal M_{0,0}(1))=0$. From \cite[Proposition 32]{LombardoPerucca1Eig} we get $\mu(\mathcal M_{a,b}(1))={1}/{6}$ for $a>0$ and 
$\mu(\mathcal M_{0,b}(1))=5/6$ for $b>0$. By \cite[Proposition 33]{LombardoPerucca1Eig} we then obtain 
\[
\mu(\mathcal M_{a,b})=\left\{
\begin{array}{ll}
0 & \text{ if } a=b=0 \\ 5 \cdot 3^{-b-1}& \text{ if } a=0, b>0 \\ 8 \cdot 3^{-4a}& \text{ if } a>0, b=0 \\ 32 \cdot 3^{-4a-b-1}& \text{ if } a>0, b>0 \,.
\end{array}
\right.
\]

We show below that the image of the 3-arboreal representation of $\alpha$ is $T_3 E \rtimes \imGal$; it follows that $$\operatorname{Dens}_3(\alpha)=\sum_{a,b \geqslant 0} \mu(\mathcal M_{a,b}) 3^{-2a-b}=\frac{23}{104}=0.22115...$$
Considering only primes up to $10^5$, SAGE computes the approximate density $0.22116$.
We similarly have $\operatorname{Dens}_3(3\alpha)=77/104=0.74038...$ and $\operatorname{Dens}_3(9\alpha)=95/104=0.91346...$, for which SAGE gives as approximate densities respectively $0.73806$ ($0.74034$ considering primes up to $10^6$) and $0.91126$.

To prove that the image of the arboreal representation is $T_3E \rtimes \imGal$ we apply Theorem \ref{thm:IfXThenLevel} with $n=1$. We need to verify $[K_{\alpha,2}:K_{\alpha,1}]=3^6$: one computes without difficulty $[K_{\alpha,1}:\mathbb{Q}]=2\cdot 3^3$, and we know $[K_2 : \mathbb{Q}]=2 \cdot 3^5$, so we are left to check $[K_{2,\alpha}:K_2]=3^4$. One divisibility is clear, so let us prove that $3^4$ divides the degree of $K_{2,\alpha}$ over $K_2$. 

Denote by $L$ the field (of degree $3^4$) generated over $\mathbb{Q}$ by a root of the 9-division polynomial of $\alpha$. Since $K_{\alpha,2}\supseteq LK_2$, it suffices to show that $L$ and $K_2$ are linearly disjoint over $\Q$. If not, exploiting the structure of the Galois group $\Gal(K_2/\mathbb{Q})$ we see there would be a subfield of $L \cap K_2$ of degree $3$ over $\mathbb Q$.
However, this field cannot exist because one can test with SAGE that the 9-division polynomial of $\alpha$ is irreducible over all subextensions of $\mathbb{Q}(E[9])$ of degree $3$ over $\Q$.

\subsection{Example (index $3$ in the normalizer of a split Cartan)}\label{sub-exa2}

Consider the elliptic curve $y^2+y=x^3+7140 $ over $\mathbb{Q}$ \cite[\href{http://www.lmfdb.org/EllipticCurve/Q/1521/b/2}{label 1521.b2}]{lmfdb}, which has potential complex multiplication by $\mathbb{Z}[\zeta_3]$. Consider the point of infinite order $\alpha=(56,427)$.
The image $\imGal$ of the 13-adic Galois representation is properly contained in the normalizer of a split Cartan subgroup of $\GL_2(\Z_{13})$: the image $\imGal(1)$ of the modulo-13 representation is generated by the matrices
\[
\begin{pmatrix}
2 & 0 \\ 0 & 2
\end{pmatrix},
\begin{pmatrix}
5 & 0 \\ 0 & 1
\end{pmatrix}, \begin{pmatrix}
0 & 1 \\ -1 & 0
\end{pmatrix}
\]
thus it is a group of order $96$ (and index 3) in the full normalizer of a split Cartan subgroup of $\GL_2(\Z/13\Z)$, and one can check that $\imGal$ is the inverse image of $\imGal(1)$ in $\GL_2(\Z_{13})$. 
We keep the notation from \cite{LombardoPerucca1Eig}. A direct computation gives $\mu^C_{0,0}(1) = 41/48$, $\mu^C_{0,1}(1) = 1/8$ and hence  $\mu_{a,b}>0$ for every $a,b\geqslant 0$: we may then apply \cite[Propositions 32-33]{LombardoPerucca1Eig}.
A direct computation gives $\mu_{0,0}^*(1)=11/12$, $\mu_{0,1}^*(1)=1/12$, and we can apply \cite[Theorem 40]{LombardoPerucca1Eig}. So we have 
\[
\mu^C_{a,b} = \left\{\begin{array}{lllll}
\frac{41}{48} & \text{ if } & a=b=0 \\
\frac{3}{2} \cdot 13^{-b} & \text{ if } & a=0, b \geqslant 1 \\
3 \cdot 13^{-2a} & \text{ if } & a \geqslant 1, b=0 \\
6 \cdot 13^{-2a-b} & \text{ if } & a \geqslant 1, b \geqslant 1
\end{array}\right.
\; \text{ and }\;
\mu^*_{a,b}=\left\{\begin{array}{lllll}
{11}/{12} & \text{ if } & a=0, b = 0 \\
13^{-b} & \text{ if } & a =0 , b \geqslant 1 \\ 0 & \text{ if } & a>0
\end{array}\right.
\]
which by \cite[Section 3.6]{LombardoPerucca1Eig} determines $\mu(\mathcal M_{a,b})=\frac{1}{2} (\mu^{C}_{a,b} + \mu_{a,b}^*)$.
We claim that the image of the 13-arboreal representation of $\alpha$ is $T_{13} E \rtimes \imGal$, so we have 
$$\operatorname{Dens}_{13}(\alpha)=\sum_{a,b \geqslant 0} \mu(\mathcal M_{a,b}) \cdot 13^{-2a-b}=\frac{16801}{18816} =  1-\frac{36270}{(13^2-1)^2(13-1)}  = 0.89291...$$
Considering only primes up to $10^5$, SAGE computes the approximate density $0.89322$.
We similarly have $\operatorname{Dens}_{13}(13\alpha)=1-167/18816=0.99112...$. Considering primes up to $10^6$, SAGE computes the approximate density $0.99131$.

The claim can be proven by applying \cite[Theorem 3.4]{JonesRouse} once we have checked its two assumptions. Firstly, $E[13]$ is an irreducible $\imGal(1)$-module and hence $E[13]^{\imGal(1)}=\{0\}$. We make use of \cite[Lemmas 3.6 and 3.7]{JonesRouse}. 
We know $\alpha \not \in 13 E(\mathbb{Q})$ because $\alpha$ generates $E(\mathbb{Q})/E(\mathbb{Q})_{\tors}$. So we are left to prove that for $n \geqslant 1$ there is no nonzero homomorphism of $\imGal(1)$-modules between $J_n:=\ker(\imGal(n+1) \to \imGal(n))$ (where the action is conjugation) and $E[13]$ (with the usual Galois action). 
Such a homomorphism would be surjective (the image is a non-trivial $\imGal(1)$-submodule of $E[13]$), and hence also injective because $\#J_n=13^2$. This is not possible because $E[13]$ is irreducible and $J_n$ has the 1-dimensional submodule $\langle (1+13^n)I\rangle$.

\section{Universality of denominators} \label{universality}

This Section is devoted to proving Theorem \ref{Conjecture}. For any given dimension there are only finitely many possible values for the first Betti number, so we prove instead:

\begin{thm}\label{conjecture-b}
Fix $b\geqslant 1$. There exists a polynomial $p_b(t)$ such that whenever $K$ is a number field and $A/K$ is the product of an abelian variety and a torus with first Betti number $b$, then for all prime numbers $\ell$ and for all $\alpha \in A(K)$ we have $\Dens_\ell(\alpha)\cdot p_b(\ell)  \in \Z[1/\ell]$.
\end{thm}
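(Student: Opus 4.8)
We may assume from the start, by Remark~\ref{nieuw}, that $\mathbb Z\alpha$ is Zariski-dense in $A$ (otherwise $\Dens_\ell(\alpha)$ is either $0$ or attached to a connected algebraic subgroup of $A$ of smaller dimension, and it suffices to let $p_b$ be divisible by $p_{b'}$ for all $b'<b$). The plan is then to take the closed formula of Theorem~\ref{thm-general} and strip off, one by one, the factors whose denominators are automatically powers of $\ell$, until only a single local $\ell$-adic integral remains to be estimated. By Lemma~\ref{lemma:DefinitionF} the constant $\failure=\ell^{bn_0}/\#\Gal(K_{\alpha,n_0}/K_{n_0})$ is a power of $\ell$ with exponent in $\Z_{\geqslant 0}$, and by Lemma~\ref{lemma:Extendw} the function $\function$ factors through reduction modulo $\ell^{n_0}$ and takes values in $\{0\}\cup\{\ell^{-k}:k\geqslant 0\}$; both therefore lie in $\Z[1/\ell]$. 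Splitting the domain of integration into the fibres of the reduction map $\pi_{n_0}\colon\imGal\to\imGal(n_0)$, Theorem~\ref{thm-general} becomes
\[
\Dens_\ell(\alpha)=\frac{\failure}{\#\imGal(n_0)}\sum_{M_0\in\imGal(n_0)}\function(M_0)\cdot J(M_0),\qquad J(M_0):=\int_{\pi_{n_0}^{-1}(M_0)}\ell^{-\det_\ell(x-I)}\,d\mu,
\]
with the Haar measure on each fibre normalized. Since $\imGal(n_0)\leqslant\GL_b(\Z/\ell^{n_0}\Z)$, the prime-to-$\ell$ part of $\#\imGal(n_0)$ divides $N_b(\ell):=\prod_{i=1}^{b}(\ell^i-1)$. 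Hence it suffices to exhibit a polynomial $q_b$ depending only on $b$ with $J(M_0)\cdot q_b(\ell)\in\Z[1/\ell]$ for all $M_0$, all $\ell$ and all $(K,A/K,\alpha)$: the theorem then holds with $p_b:=N_b\cdot q_b$.

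Now I analyse $J(M_0)$. The fibre $\pi_{n_0}^{-1}(M_0)$ is a coset of $\Gamma:=\ker(\imGal\to\imGal(n_0))$; increasing $n_0$ and using that $\imGal$ is open in the $\Q_\ell$-points of its Zariski closure $G$ (Proposition~\ref{prop:GeneralizedBogomolov}), we may assume $\Gamma=\{g\in G(\Z_\ell):g\equiv I\pmod{\ell^{n_0}}\}$ is the principal congruence subgroup of the connected group $G^0$, which is reductive because $T_\ell A\otimes\Q_\ell$ is a semisimple Galois representation ($A$ being a product of an abelian variety and a torus). The polynomial $x\mapsto\det(x-I)$ is conjugation-invariant, and $x\equiv M_0\pmod{\ell^{n_0}}$ along the fibre, so $J(M_0)=1$ unless $1$ is an eigenvalue of $M_0$ modulo $\ell$; in the remaining case, decomposing $T_\ell A$ into the generalized $1$-eigenspace of $M_0\bmod\ell$ and a complementary $M_0$-stable $\Z_\ell$-sublattice (lift the idempotent cutting out the complement), the complementary block is invertible modulo $\ell$ and contributes a unit to $\det(x-I)$, so that $J(M_0)$ reduces to an integral of the same shape over a lower-dimensional congruence subgroup on which $M_0$ acts with unipotent reduction. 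There one stratifies by the elementary-divisor type of $x-I$ — equivalently, by the isomorphism class of $\ker(x-I:A[\ell^\infty]\to A[\ell^\infty])$, as in Section~\ref{sect:1Eigen} and the proof of Lemma~\ref{lemma:EverythingHasWellDefinedAB} — and, exactly as for elliptic curves in Proposition~\ref{prop:ComputeDeltas} and the proof of Theorem~\ref{compu-thm}, one partitions the index set of this stratification into finitely many \emph{admissible} subsets of a space $\mathbb N^k$ ($k\leqslant b$) on which both the measure of the stratum and the constant value of $\ell^{-\det_\ell(x-I)}$ have the form $c\cdot\ell^{-L}$, where $c$ has bounded prime-to-$\ell$ denominator (the measures being given by universal counts of matrices of prescribed rank type over $\Z/\ell^m\Z$) and $L$ is a linear form in the parameters with nonnegative integer coefficients bounded in terms of $b$. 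Summing the resulting geometric series $\sum_m\ell^{-dm}$ introduces only factors $\ell^d-1$ with $d$ bounded in terms of $b$, which yields $J(M_0)\cdot q_b(\ell)\in\Z[1/\ell]$ for a fixed polynomial $q_b$.

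The formal part of this argument is everything before the stratification of $J(M_0)$. The real difficulty is to carry out that stratification with \emph{every} of its parameters — the number of admissible pieces, the exponents $d$ in the geometric series, and the denominators of the leading constants $c$ — bounded purely in terms of $b$, so that they depend neither on $n_0$ nor on the (generally unbounded) index $[G(\Z_\ell):\imGal]$. For $b=2$ this is precisely what the classification of $\GL_2(\Z_\ell)$ and of all its Cartan subgroups in \cite{LombardoPerucca1Eig} delivers, and it is what makes Theorem~\ref{compu-thm} quantitative; for general $b$ one must replace that explicit case analysis by a uniform statement about the determinantal stratification of principal congruence subgroups of connected reductive subgroups of $\GL_b$ (of which there are, up to conjugacy, only boundedly many), using the Serre--Wintenberger homothety input of Lemma~\ref{homotheties} to neutralise the index of $\imGal$ inside $G(\Z_\ell)$. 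This uniform control is the heart of the proof.
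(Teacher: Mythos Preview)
Your reduction is correct and mirrors the paper's: strip off $\failure$ and $\function$ (both in $\Z[1/\ell]$), decompose the integral over cosets of a principal congruence subgroup of the identity component of the Zariski closure, and observe that the prime-to-$\ell$ part of $\#\imGal(n_0)$ divides $\prod_{i=1}^b(\ell^i-1)$. The paper does exactly this (the connected-components factor is handled by Proposition~\ref{prop:ConnectedComponents} rather than being absorbed into $\#\imGal(n_0)$, but the effect is the same).

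The genuine gap is where you yourself locate it. You reduce to proving that $J(M_0)\cdot q_b(\ell)\in\Z[1/\ell]$ for a universal $q_b$, and then propose to obtain this by an explicit admissible stratification of the congruence subgroup by elementary-divisor type, \emph{asserting} that the number of strata, the exponents in the resulting geometric series, and the prime-to-$\ell$ denominators of the leading constants are all bounded in terms of $b$. You then write that ``this uniform control is the heart of the proof'' --- which is true, but you do not supply it. The eigenspace splitting you sketch depends on $M_0$, the measures of the strata depend on the specific reductive group $G^0$ and not just on $b$, and Lemma~\ref{homotheties} (which concerns scalars in $\imGal$ and is used only for the asymptotics in Section~\ref{sect:asymptotics}) does not help with denominators. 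Generalising the $\GL_2$ classification of \cite{LombardoPerucca1Eig} to arbitrary $b$ by bare hands would be a substantial undertaking, and nothing in your argument indicates how to do it.

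The paper bypasses explicit stratification entirely and appeals to model theory. It first shows (Proposition~\ref{prop:ClassC} and Theorem~\ref{thm:FinitelyManyGroups}) that $\ZarGalId$ belongs to a class of reductive subgroups of $\GL_{b,\Q_\ell}$ that are all cut out by at most $N(b)$ polynomials of degree at most $D(b)$; this makes the integration domain $\{M\in G^0(\Z_\ell):M\equiv I\bmod\ell^n\}$, together with the condition $\det_\ell(TM-I)=k$, definable in the language $L_P$ by a \emph{single} formula (depending only on $b$) with $\ell$-adic parameters. Macintyre's uniform rationality theorem for $p$-adic integrals over $L_P$-definable families (Theorem~\ref{thm:Mcintyre}) then gives a fixed polynomial $r(t,u)\in\Z[t,u]$ such that the associated Poincar\'e series has denominator $r(\ell^{-s},\ell^{-1})$ for every $\ell$, every $G^0$, every $T$ and every $n$; evaluating at the right $s$ produces exactly the universal $q_b$ you were after (Proposition~\ref{prop:IntegralReformulation}). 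This is the missing ingredient in your argument.
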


\subsection{Preliminaries}

Fix an algebraic subgroup $\algQ$ of $\GL_{b,\Q_\ell}$. We set $\algQ(\mathbb{Z}_\ell):=\algQ(\mathbb{Q}_\ell) \cap \GL_{b}(\Z_\ell)$ and define $\algRid(n)$ as the reduction modulo $\ell^n$ of $\algQ(\mathbb{Z}_\ell)$. There is a rational constant $c(\algQ)$ such that $\#\algRid(n) = c(\algQ){\ell^{n\cdot\dim(\algQ)}}$ holds for all  sufficiently large $n$. We also introduce the polynomial
$$gl_b(t):=\prod_{k=0}^{b-1} (t^b-t^k),$$ 
which satisfies $\#\GL_b(\mathbb Z/\ell\mathbb Z)=gl_b(\ell)$ for all primes $\ell$.

\begin{lem}\label{lemma:cG}
We have $c(\algQ)^{-1} \cdot gl_b(\ell) \in \Z[1/\ell]$ and $\#\algRid(n)^{-1} gl_b(\ell) \in \Z[1/\ell]$ for every $n \geqslant 1$.
\end{lem}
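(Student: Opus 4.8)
The plan is to reduce both claims to the single observation that $\#\algRid(n)$ always divides $\#\GL_b(\Z/\ell^n\Z)$, which is elementary: $\algRid(n)$ is by definition a subgroup of $\GL_b(\Z/\ell^n\Z)$, so Lagrange's theorem applies. Now $\#\GL_b(\Z/\ell^n\Z) = \ell^{(n-1)b^2}\cdot gl_b(\ell)$, since the kernel of $\GL_b(\Z/\ell^n\Z)\to\GL_b(\Z/\ell\Z)$ has order $\ell^{(n-1)b^2}$. Hence $\#\algRid(n)$ divides $\ell^{(n-1)b^2}\,gl_b(\ell)$, and therefore $\#\algRid(n)^{-1}\,gl_b(\ell)\in\Z[1/\ell]$ for every $n\geqslant 1$. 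This settles the second assertion.

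For the first assertion, recall that $\#\algRid(n) = c(\algQ)\,\ell^{n\dim\algQ}$ for all $n$ large enough; fix such an $n$. Then $c(\algQ)^{-1} = \ell^{n\dim\algQ}/\#\algRid(n)$, so $c(\algQ)^{-1}\,gl_b(\ell) = \ell^{n\dim\algQ}\cdot\bigl(\#\algRid(n)^{-1}\,gl_b(\ell)\bigr)$, which lies in $\Z[1/\ell]$ by what was just shown. (One should note that $c(\algQ)$ is a well-defined positive rational number: this is exactly the statement that $\#\algRid(n)\,\ell^{-n\dim\algQ}$ converges, which is \cite[Theorem 2]{MR656627} as already invoked in the proof of Lemma \ref{matrici}, and the limit is rational because each $\#\algRid(n)$ is an integer and $\#\algRid(n+1)/\#\algRid(n)=\ell^{\dim\algQ}$ eventually by Lemma \ref{matrici}.)

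There is essentially no obstacle here; the only mild subtlety is keeping track of which power of $\ell$ appears, but since the conclusions are stated modulo $\Z[1/\ell]$ this is irrelevant — all powers of $\ell$ (positive or negative) are harmless, so it suffices to exhibit $gl_b(\ell)$ as a common multiple of the ``prime-to-$\ell$ parts'' of $\#\algRid(n)$, which the divisibility $\#\algRid(n)\mid\#\GL_b(\Z/\ell^n\Z)$ does immediately. The main point to record for the applications later in Section \ref{universality} is that the polynomial $gl_b(t)$ produced here depends only on $b$, not on $\algQ$, $K$, $\ell$ or $n$.
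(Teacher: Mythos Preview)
Your proof is correct and follows essentially the same approach as the paper: both arguments reduce to the divisibility $\#\algRid(n)\mid \#\GL_b(\Z/\ell^n\Z)=\ell^{(n-1)b^2}gl_b(\ell)$ via Lagrange, and then deduce the statement for $c(\algQ)^{-1}$ by writing $c(\algQ)^{-1}=\ell^{n\dim\algQ}\cdot \#\algRid(n)^{-1}$ for large $n$. The only cosmetic difference is that the paper handles both assertions in one line, whereas you treat the second first and then derive the first from it.
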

\begin{proof}
For every sufficiently large $n$ we have $c(\algQ)^{-1} \cdot gl_b(\ell)=\ell^{nd} \cdot \#\algRid(n)^{-1} \cdot gl_b(\ell)$
and we may conclude because $\#\algRid(n)$ divides $\#\GL_{b}(\Z/\ell^n\Z) = gl_b(\ell) \cdot \ell^{b^2 (n-1)}$.
\end{proof}

\subsection{Generating sets of polynomials}
Let $k$ be any field of characteristic $0$ (we will only need the result for ${\mathbb Q_\ell}$).

\begin{defi}\label{def:ClassC}
We say that an algebraic subgroup $\algQ$ of $\operatorname{GL}_{b,k}$ is of class $\mathcal{C}$ if it is reductive, connected, and if the weights of the radical of $\algQ_{\overline{k}}$ acting on ${\overline{k}}{}^b$ (via the tautological representation) are in $\{0,1\}$. Recall that the radical is an algebraic torus given by the identity component of the center of $\algQ$.
\end{defi}

\begin{rem}\label{rem-classC}
Let $\algQ \subseteq \operatorname{GL}_{b,k}$ be a group of class $\mathcal{C}$.
Then $\algQ_{\overline{k}}$ is of class $\mathcal{C}$ (the group $\algQ$ is geometrically connected because it is connected and has a rational point \cite[Tag 04KV]{stacks-project}). 
The radical of $\algQ$ is of class $\mathcal{C}$ (it is its own radical, hence the weights are the same as for $\algQ$). The derived subgroup of $\algQ$ is of class $\mathcal{C}$ (its radical is trivial, hence the weights of its action are zero).
\end{rem}

\begin{lem}\label{prop:SemisimplePlusCenter}
There exists an integer $D(b)$ such that any group $\algQ_{\bar{k}}\subseteq \operatorname{GL}_{b,\bar{k}}$ of class $\mathcal{C}$ can be defined in $\bar{k}[x_{ij},y]/(\det(x_{ij})y-1)$ by finitely many polynomials of degree at most $D(b)$.
\end{lem}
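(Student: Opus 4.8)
The plan is to deduce the statement from two facts, for a fixed $b$: first, that there are only finitely many $\GL_b(\bar{k})$-conjugacy classes of algebraic subgroups of $\GL_{b,\bar{k}}$ of class $\mathcal{C}$, and that representatives of these classes may be chosen to be defined over $\overline{\Q}$ by polynomials of degree at most some $D_0=D_0(b)$ depending on $b$ alone; and second, that conjugating a subgroup of $\GL_{b,\bar{k}}$ by a fixed matrix does not raise the degrees of its defining polynomials. Granting these, one sets $D(b):=D_0(b)$: any $\algQ_{\bar{k}}\subseteq\GL_{b,\bar{k}}$ of class $\mathcal{C}$ is $\GL_b(\bar{k})$-conjugate to the base change of one of the finitely many $\overline{\Q}$-models, hence is cut out by finitely many polynomials of degree at most $D(b)$.

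For the finiteness, I would take $\algQ\subseteq\GL_{b,\bar{k}}$ of class $\mathcal{C}$, write $R$ for its radical (a torus) and $\algQ^{\mathrm{der}}$ for its derived subgroup (semisimple), so that $\algQ=R\cdot\algQ^{\mathrm{der}}$ is a central almost direct product; by Remark \ref{rem-classC} both $R$ and $\algQ^{\mathrm{der}}$ are again of class $\mathcal{C}$. Since the tautological representation of $\GL_b$ on $\bar{k}^b$ is faithful, so is the action of $R$, so its weights generate the character lattice of $R$; as these weights lie in $\{0,1\}$ by Definition \ref{def:ClassC}, I get $\dim R\le 1$, and the restriction of $\bar{k}^b$ to $R$ is one of finitely many representations. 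Next I would invoke the classical fact that $\GL_{b,\bar{k}}$ has only finitely many conjugacy classes of semisimple subgroups (there are finitely many semisimple groups of dimension $\le b^2$ up to isogeny; each has finitely many isomorphism classes of faithful representations of dimension $\le b$, such a representation being a sum of at most $b$ irreducibles of which only finitely many have dimension $\le b$; and isomorphic representations have conjugate images). Combining the two, the isomorphism class of the pair $(\algQ,\bar{k}^b)$ is one of finitely many: the abstract group $\algQ$ is one of finitely many (bounded semisimple part, rank-$\le 1$ radical glued to it along a finite central subgroup), and its faithful $b$-dimensional representation is one of finitely many up to isomorphism, since the semisimple part contributes a bounded representation and the central torus contributes only the characters $0$ and $1$. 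As isomorphic pairs have conjugate images in $\GL_b$, there are finitely many conjugacy classes of class-$\mathcal{C}$ subgroups; and since in characteristic zero connected reductive groups and their faithful representations have no moduli and are split over $\bar{k}$, representatives can be chosen over $\overline{\Q}$ (indeed over $\Q$), which makes $D_0(b)$ independent of $k$.

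The degree estimate under conjugation is then immediate: if $\algQ'=g\algQ''g^{-1}$ for some $g\in\GL_b(\bar{k})$, the ideal of $\algQ'$ in $\bar{k}[x_{ij},y]/(\det(x_{ij})y-1)$ is obtained from that of $\algQ''$ by the substitution sending each $x_{ij}$ to the corresponding entry of $g^{-1}xg$ and fixing $y$ (the determinant being conjugation invariant). This substitution is linear in the variables $x_{ij}$ and leaves $y$ unchanged, so it never raises total degree; applying it to a set of degree-$\le D_0$ generators of the ideal of $\algQ''$ produces degree-$\le D_0$ generators of the ideal of $\algQ'$.

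The step I expect to be the main obstacle is the finiteness statement together with its uniformity in $k$. The real input is the classical finiteness of conjugacy classes of semisimple subgroups of $\GL_b$; beyond that, one has to check carefully that the class-$\mathcal{C}$ hypothesis on the weights is precisely what keeps the central torus contribution finite (forcing rank $\le 1$ and only the characters $0,1$), and to use rigidity of reductive groups and their representations in characteristic zero to collapse everything to a single finite list of $\overline{\Q}$-models, so that the bound $D(b)$ genuinely depends on $b$ alone and not on $k$. Once that bookkeeping is in place, the conjugation estimate closes the argument.
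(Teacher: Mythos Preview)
Your argument contains a genuine error: the claim that the class-$\mathcal{C}$ hypothesis forces $\dim R\le 1$ is false. The ``weights in $\{0,1\}$'' condition does not mean that the characters of the radical are the two integers $0$ and $1$; it means (as the paper's own proof makes explicit) that after choosing coordinates $R\cong\mathbb{G}_m^r$, each weight vector $(a_{i1},\ldots,a_{ir})\in\mathbb{Z}^r$ of the tautological representation has all entries in $\{0,1\}$. This allows $r$ to be as large as $b$: the full diagonal torus of $\GL_b$ is already a class-$\mathcal{C}$ subgroup with $r=b$, and products such as $\GL_{b_1}\times\GL_{b_2}\subseteq\GL_b$ have radical of rank $2$. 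In the context of the paper, the $\ell$-adic monodromy group of a product of two non-isogenous abelian varieties has a rank-$2$ center, so your bound cannot hold. Since your finiteness claim and the subsequent bookkeeping all rest on $\dim R\le 1$, the argument as written does not go through.

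The paper's proof takes a different route that avoids the finiteness question for the whole group. It treats the semisimple part $S$ and the radical $T$ separately: for $S$ it invokes Richardson's finiteness of conjugacy classes of semisimple Lie subalgebras, while for $T$ it writes down explicit equations $x_{ij}=0$ ($i\ne j$) together with monomial relations $\prod_i x_{ii}^{v_i}=1$ coming from a basis of $\ker(a_{ij})$; because the matrix $(a_{ij})$ has entries in $\{0,1\}$, such a basis has entries bounded purely in terms of $b$, whence the degree bound. The passage from $S$ and $T$ to $G=S\cdot T$ is then handled not by counting conjugacy classes but by appealing to the calculus of bounded complexity: $S\times T$ has bounded complexity, the multiplication map $\GL_b\times\GL_b\to\GL_b$ has bounded complexity, and hence so does its image $G$. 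Your strategy (finitely many conjugacy classes, then conjugation preserves degree) could in principle be repaired by giving a correct finiteness argument for class-$\mathcal{C}$ tori (there \emph{are} only finitely many $b\times r$ matrices with $\{0,1\}$ entries) and then carefully controlling the almost-direct product, but as it stands the key step is wrong.
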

\begin{proof}
We know that $\algQ_{\overline{k}}$ is the almost-direct product of its derived group $S$ (which is semi-simple) and of its radical $T$. Notice that a change of basis in $\GL_{b,\bar{k}}$ changes neither the degree nor the number of the equations defining a subgroup, so we can work up to conjugation. 

We first prove the statement for $S$. By the Lefschetz principle, we may work over $\mathbb C$ and it suffices to remark that there are only finitely many conjugacy classes of semi-simple Lie subalgebras of $\mathrm{Lie}(\operatorname{GL}_{b,\mathbb C})$ \cite[Section 12 (a)]{Richardson}.

The statement is also true for $T$. Indeed, up to conjugation, the action of $T$ on $\overline{k}{}^b$ is given by 
$$(z_1,\ldots,z_r) \mapsto \operatorname{diag}( z_1^{a_{11}} \cdots z_r^{a_{1r}}, \ldots, z_1^{a_{b1}} \cdots z_r^{a_{br}} )$$ where the exponents $a_{ij}$ are the weights of the tautological representation and hence are in $\{0,1\}$ by assumption. The equations defining $T$ are $x_{ij}=0$ for $i \neq j$ and the finitely many equations of bounded degree $\prod_{i=1}^b x_{ii}^{v_i}=1$, where the vector $(v_i)$ ranges over a basis of $\ker (a_{ij})$.

To conclude, we make use of the theory of bounded complexity. We have shown that $S$ and $T$ have bounded complexity and hence the same holds for $S\times T$ \cite[Definition 3.1 and Remarks]{MR2827010}.
Since the product map $\GL_{b,\overline{k}} \times \GL_{b,\overline{k}} \to \GL_{b,\overline{k}}$ has bounded complexity \cite[Definition 3.3]{MR2827010}, the same holds  by \cite[Lemma 3.4]{MR2827010}  for the restriction to $S \times T$ and for its image $\algQ_{\overline{k}}$.
\end{proof}

\begin{thm}\label{thm:FinitelyManyGroups}
Let $\algQ \subseteq \operatorname{GL}_{b,k}$ be a group of class $\mathcal{C}$. There exist integers $D(b), N(b)$ (depending only on $b$) such that $\algQ$ can be defined in $R := k[x_{ij},y]/(\det(x_{ij})y-1)$ by at most $N(b)$ polynomials of degree at most $D(b)$.
\end{thm}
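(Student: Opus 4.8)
The plan is to deduce Theorem \ref{thm:FinitelyManyGroups} from Lemma \ref{prop:SemisimplePlusCenter} together with a Galois-descent argument, so that the only genuinely new ingredient beyond Lemma \ref{prop:SemisimplePlusCenter} is controlling the number (rather than merely the degree) of the defining equations, and passing from $\bar k$ back to $k$. First I would recall from Remark \ref{rem-classC} that $\algQ$ is geometrically connected, so $\algQ_{\bar k} \subseteq \GL_{b,\bar k}$ is again of class $\mathcal C$; by Lemma \ref{prop:SemisimplePlusCenter} it is cut out in $\bar k[x_{ij},y]/(\det(x_{ij})y-1)$ by finitely many polynomials of degree at most $D(b)$. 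The key point here is that the vector space of polynomials of degree at most $D(b)$ in the finitely many variables $x_{ij},y$ (modulo the single relation $\det(x_{ij})y-1$) is finite-dimensional, of dimension depending only on $b$; hence an \emph{arbitrary} finite generating set of the ideal $I(\algQ_{\bar k})$ in degree $\leqslant D(b)$ can be trimmed to a spanning set of the corresponding graded piece, whose size $N(b)$ depends only on $b$. This already gives the statement over $\bar k$ with both bounds $D(b), N(b)$ uniform.

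The remaining step is descent to $k$. Since $\algQ$ is defined over $k$, its ideal $I(\algQ) \subseteq R$ is a $k$-rational ideal, and $I(\algQ_{\bar k}) = I(\algQ) \otimes_k \bar k$. Let $V_{\leqslant D}$ denote the (finite-dimensional, $k$-rational) subspace of $R$ of elements represented by polynomials of degree at most $D(b)$. We have just shown that the $\bar k$-span of $I(\algQ_{\bar k}) \cap (V_{\leqslant D} \otimes \bar k)$ generates $I(\algQ_{\bar k})$ as an ideal. But $I(\algQ_{\bar k}) \cap (V_{\leqslant D}\otimes\bar k) = \big(I(\algQ)\cap V_{\leqslant D}\big)\otimes_k \bar k$ since intersection of $k$-subspaces commutes with flat base change; therefore a $k$-basis of the finite-dimensional space $I(\algQ)\cap V_{\leqslant D}$ is simultaneously a $\bar k$-basis after extension of scalars, and thus generates $I(\algQ_{\bar k})$, hence also $I(\algQ)$ (a set of elements of $I(\algQ)$ generating $I(\algQ)\otimes_k\bar k$ over $\bar k$ generates $I(\algQ)$ over $k$, again by flatness/faithful flatness of $k \to \bar k$). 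Taking $N(b) := \dim_k V_{\leqslant D(b)}$ — an explicit quantity depending only on $b$ — we obtain at most $N(b)$ polynomials of degree at most $D(b)$ defining $\algQ$ over $k$, as required.

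I expect the main obstacle to be purely bookkeeping: making sure that ``degree at most $D(b)$'' is an intrinsic, base-change-stable notion in the coordinate ring $R = k[x_{ij},y]/(\det(x_{ij})y - 1)$ of $\GL_b$, i.e.\ that one fixes once and for all the presentation and measures degree of a representative polynomial there, so that the subspace $V_{\leqslant D}$ is genuinely $k$-rational and its formation commutes with $-\otimes_k\bar k$. Once that is set up correctly, the argument is a short application of faithfully flat descent for the finite-dimensional ambient space of low-degree functions, and no further geometric input is needed. (One should also note in passing that the two integers $D(b)$ and $N(b)$ produced this way are effective in principle, since Lemma \ref{prop:SemisimplePlusCenter} rests on the finiteness of conjugacy classes of semisimple subalgebras of $\mathrm{Lie}(\GL_{b,\mathbb C})$ and on an explicit bound on the weight data, though effectivity is not needed for the applications to Theorem \ref{conjecture-b}.)
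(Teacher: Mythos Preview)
Your proposal is correct and follows essentially the same approach as the paper: reduce to bounding the degree via Lemma \ref{prop:SemisimplePlusCenter}, observe that $N(b)$ is then bounded by the dimension of the space of low-degree polynomials, and descend from $\bar k$ to $k$. The only cosmetic difference is in the descent step: the paper passes to a finite Galois extension $L/k$ containing the coefficients of the $f_i$ and uses the trace $\operatorname{tr}_{L/k}(t_jf_i)$ to produce $k$-rational generators of $I$, whereas you invoke the fact that intersection of $k$-subspaces commutes with flat base change to identify $I(\algQ)\cap V_{\leqslant D}$ directly; both arguments are standard and equivalent here.
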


\begin{proof}
It suffices to show that there are defining polynomials of degree at most $D(b)$ because the polynomials in $k[x_{ij},y]$ of a given degree form a finite dimensional vector space (whose dimension can be bounded purely in terms of the number of variables and of the degree, hence ultimately in terms of $b$).
We let $I$ be the ideal of $\algQ$ in $R$ and $I_{\overline{k}}=I \otimes \overline{k}$ the ideal of $\algQ_{\overline{k}}$ in $R \otimes \overline{k}$.
By Remark \ref{rem-classC} and Lemma \ref{prop:SemisimplePlusCenter}, the ideal  $I_{\overline{k}}$ is defined by finitely many polynomials $f_i$ of degree at most $D(b)$. Fix a finite, Galois extension $L$ of $k$ that contains all their coefficients, and let $\{t_j\}$ be a basis of $L$ over $k$. The polynomials $\operatorname{tr}_{L/k}\left(t_j f_i \right)$
(the trace is taken coefficientwise) are in $I$ and have degree bounded by $D(b)$. They generate $I$ because we can write $f \in I$ as 
\[
f = \sum_{i=1}^r a_i \cdot f_i = \frac{1}{[L:k]} \sum_{i=1}^r  \operatorname{tr}_{L/k}(a_i \cdot f_i)
\]
for some polynomials $a_i$ in $R \otimes L$.
\end{proof}

\subsection{The image of the $\ell$-adic representation}

Let $A$ be the product of an abelian variety and a torus defined over a number field. Let $\ZarGal$ be the Zariski closure in $\GL_{b,\Q_\ell}$ of the image of the $\ell$-adic Galois representation attached to $A$.

\begin{prop}\label{prop:ClassC}
The identity component of $\ZarGal$ is of class $\mathcal{C}$.
\end{prop}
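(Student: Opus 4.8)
The plan is to combine the structure theory of the $\ell$-adic monodromy groups of abelian varieties and tori with the Weil pairing, which forces the relevant torus weights into $\{0,1\}$.

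First I would recall that $A = A' \times T$ where $A'$ is an abelian variety and $T$ a torus, so that $T_\ell A = T_\ell A' \oplus T_\ell T$ and the $\ell$-adic representation decomposes accordingly; hence $\ZarGal$ sits inside $\ZarGal' \times \ZarGal''$ where the two factors are the monodromy groups of $A'$ and $T$. By a theorem of Bogomolov (for $A'$) and the elementary description of the image for tori, each $\ZarGal'$, $\ZarGal''$ is reductive, and so is $\ZarGal$. Replacing $\ZarGal$ by its identity component $\ZarGalId$ I may also assume it is connected, and of course reductive connected groups have a radical which is a torus equal to the identity component of the center. So two of the three properties in Definition \ref{def:ClassC} are essentially automatic, and the real content is the weight condition on the radical.

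Next I would analyse the weights of the radical $R$ of $\ZarGalId$ acting on $\overline{\mathbb{Q}_\ell}{}^b = T_\ell A \otimes \overline{\mathbb{Q}_\ell}$. Since $R$ is contained in the center of $\ZarGalId$, and $\ZarGalId$ contains (an open subgroup of) the image of $\Gal(\overline K/K)$, the weight spaces of $R$ are Galois-stable; by the decomposition $T_\ell A = T_\ell A' \oplus T_\ell T$ it suffices to treat the abelian variety part and the torus part separately. For the torus $T$, after base change to a splitting field $T \cong \mathbb{G}_m^r$, the Galois action on $T_\ell T$ is via the cyclotomic character on each factor, so $R$ acts on each of these $r$ lines by a single character, which one normalizes to have weight $1$ (or $0$ on any trivial summand). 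For the abelian variety part the key tool is the Weil pairing: it identifies $T_\ell \hat{A'}$ with $\operatorname{Hom}(T_\ell A', \mathbb{Z}_\ell(1))$, and — using the theorem of Serre–Wintenberger quoted in Lemma \ref{homotheties} that homotheties are (up to a fixed power) in the image — one sees the center acts on $T_\ell A'$ through characters $\chi$ with $\chi \cdot \chi^{\vee}$ equal to the cyclotomic character on the image of the pairing. Choosing the normalization of weights so that the cyclotomic character has weight $1$, the characters of $R$ on the simple constituents of $T_\ell A' \otimes \overline{\mathbb{Q}_\ell}$ have weight-pairs summing to $1$; since the weights are integers and the tautological representation of $\GL_{b}$ visibly has all weights $\geq 0$ (one can arrange this by the normalization), they must lie in $\{0,1\}$. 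More cleanly: after twisting, the representation of $\ZarGalId$ on $\overline{\mathbb{Q}_\ell}{}^b$ has Hodge–Tate–type weights $\{0,1\}$ (it comes from the Tate module of a $1$-motive / semiabelian variety, which has Hodge–Tate weights in $\{0,1\}$ by Faltings, cf.\ the proof of Proposition \ref{prop:GeneralizedBogomolov}), and the weights of the radical are a subset of these.

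**The main obstacle** I anticipate is making the normalization of weights precise: the notion of ``weight'' of a torus action on $\overline{\mathbb{Q}_\ell}{}^b$ is only defined up to an overall shift/sign on each isotypic piece, so the statement ``weights in $\{0,1\}$'' really means ``weights can be chosen in $\{0,1\}$'', and one must check that the Weil pairing genuinely constrains the $\emph{difference}$ of the two weights appearing on dual pieces to be $\pm 1$ (forcing $\{0,1\}$ after a shift) rather than merely constraining their sum up to an ambiguous constant. The cleanest way around this is to invoke the Hodge–Tate decomposition at primes above $\ell$ (available for semiabelian varieties as recalled in the proof of Proposition \ref{prop:GeneralizedBogomolov}), which pins down the weights canonically as $\{0,1\}$ and shows that the Hodge–Tate cocharacter lies in the radical, so the radical's weights are among $\{0,1\}$. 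I would present the proof via this route, with the Weil-pairing argument as a more elementary alternative for the abelian-variety factor.
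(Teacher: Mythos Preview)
Your overall plan (reduce to the two factors, handle the torus factor directly, and for the abelian variety invoke reductivity and then argue about weights) matches the paper's structure. Two minor points first: reductivity of $\ZarGalId$ for abelian varieties is Faltings' semisimplicity theorem, not Bogomolov; and the paper actually asserts that $\ZarGalId$ \emph{is} the product of the identity components of the two monodromy groups, which slightly simplifies the reduction.

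The real difficulty is the weight condition, and here your two proposed arguments each contain a genuine gap. Your Hodge--Tate route rests on the claim that ``the Hodge--Tate cocharacter lies in the radical''. This is false in general: for a non-CM elliptic curve, $\ZarGalId=\GL_2$, the radical is the scalar $\mathbb{G}_m$, but the Hodge--Tate cocharacter is (conjugate to) $z\mapsto\operatorname{diag}(z,1)$, which is not central. More broadly, knowing that a representation has Hodge--Tate weights in $\{0,1\}$ constrains the weights of certain \emph{cocharacters} into $\ZarGalId$, not the characters of its connected center; the sentence ``the weights of the radical are a subset of these'' conflates two different objects. Your Weil-pairing route, which you yourself flag as problematic, only yields a relation of the form $\chi+\chi'=\chi_{\mathrm{cyc}}$ between characters on dual pieces; the step ``the tautological representation of $\GL_b$ visibly has all weights $\geqslant 0$'' has no clear meaning before a normalization is fixed, and you have not produced one.

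The paper sidesteps this entirely by quoting Pink: his Theorem~5.10 shows that for an abelian variety $(\ZarGalId,V_\ell)$ is a \emph{weak Mumford--Tate pair} in the sense of his Definition~4.1 (i.e.\ $\ZarGalId$ is generated by cocharacters with weights in $\{0,1\}$ on $V_\ell$), and the discussion immediately following that definition shows that for such pairs the action of the radical has weights in $\{0,1\}$. This is precisely the structural input that turns ``Hodge--Tate weights are $\{0,1\}$'' into ``radical weights are $\{0,1\}$'', and it is not elementary. If you want to avoid citing Pink you would have to reproduce that part of his argument; neither of your sketches does so.
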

\begin{proof}
The group $\ZarGalId$ is clearly connected, and it is the product of the identity components of the $\ell$-adic monodromy groups associated with the torus and with the abelian variety. The claim is obvious for the torus, because we can reduce to the case of $\mathbb{G}_m$, which is clear (the weight of the tautological representation is $1$).
We may then assume that $A$ is an abelian variety, and hence $\ZarGalId$ is reductive by a celebrated theorem of Faltings. We are left to understand the tautological representation $\rho$ of the reductive group $(\ZarGalId)_{\overline{\mathbb{Q}_\ell}}$ on $\overline{\mathbb{Q}_\ell}{}^b$. Since $\rho$ is the direct sum of irreducible representations, we may consider the weight of every irreducible factor $\rho'$ separately. The weights of the action of the radical are in $\{0,1\}$ by the discussion following \cite[Definition 4.1]{MR1603865}: notice that by \cite[Theorem 5.10]{MR1603865} the pair given by $\ZarGalId$ together with its tautological representation is a weak Mumford-Tate pair in the sense of \cite[Definition 4.1]{MR1603865}.
\end{proof}

\begin{prop}\label{prop:ConnectedComponents}
There is a non-zero integer $z(b)$, depending only on the Betti number $b$, such that number of connected components of $\ZarGal$ divides $z(b)$.
\end{prop}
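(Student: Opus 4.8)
The plan is to bound the number of connected components of $\ZarGal$ purely in terms of $b$ by exploiting the fact that $A$ is (up to isogeny) the product of an abelian variety and a torus, so that $\ZarGal$ is a subgroup of the product of two monodromy groups each carrying a polarization-type structure. First I would reduce to the abelian-variety case: the torus factor contributes a monodromy group inside $\GL_{r,\Q_\ell}$ which is contained in a maximal torus (a subgroup of $\mathbb{G}_m^r$ acting diagonally), and the number of its connected components is bounded in terms of $r \leqslant b$ since it is cut out inside the diagonal torus and its component group injects into a quotient of $(\Z/2\Z)$-type data coming from the splitting field; more simply, the monodromy group of a torus is connected after passing to the splitting field, so its component group is bounded by $[\text{splitting field}:K]$, which need not be bounded — hence I would instead observe that the Zariski closure of the image is already a subtorus of $\GL_{r}$, hence connected, because the image of the $\ell$-adic representation of a torus lands in a torus and the Zariski closure of a subgroup of a torus is a subtorus (tori have no non-trivial finite subgroup-schemes that are Zariski-closed subgroups... more carefully: a Zariski-closed subgroup of a torus over a field of characteristic $0$ is the extension of a finite abelian group by a subtorus, and the finite part is a subgroup of $(\Q/\Z)^r$, which again is not bounded). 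So the genuinely bounded input must come from elsewhere.

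The correct approach: by a theorem of Faltings, for the abelian-variety factor $A'$ the identity component $\ZarGalId$ is reductive, and by Proposition \ref{prop:ClassC} it is of class $\mathcal{C}$; by Theorem \ref{thm:FinitelyManyGroups} there are, up to the $\GL_b$-conjugacy, only finitely many possibilities for $\ZarGalId$ (being cut out by at most $N(b)$ polynomials of degree at most $D(b)$, a bounded-dimensional family, hence finitely many up to $\GL_b(\overline{\Q_\ell})$-conjugacy since each such variety is a subgroup and the subgroup condition is closed). The key classical fact I would invoke is that $\ZarGal$ normalizes $\ZarGalId$, so the component group $\ZarGal/\ZarGalId$ embeds into $\operatorname{Out}(\ZarGalId) = \operatorname{Aut}(\ZarGalId)/\ZarGalId^{\mathrm{ad}}$-type data — more precisely, $\ZarGal/\ZarGalId$ acts faithfully on $\ZarGalId$ by conjugation modulo inner automorphisms together with the action on the character lattice of the radical and the Dynkin diagram of the semisimple part, all of which are finite groups whose order is bounded once $\ZarGalId$ ranges over a finite list. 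Then I would conclude: for each of the finitely many $\ZarGalId$ appearing, the group of algebraic automorphisms modulo inner ones is finite of bounded order, hence the component group of any $\ZarGal$ with that identity component divides the product of these bounds; taking $z(b)$ to be the least common multiple over the finitely many cases gives the result.

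The step I expect to be the main obstacle is making rigorous the claim that $\ZarGal/\ZarGalId$ is \emph{finite of bounded order}: finiteness is automatic since $\ZarGal$ is an algebraic group, but the uniform bound requires controlling the outer automorphism group of a reductive group of class $\mathcal{C}$ inside $\GL_b$, together with the fact that any component must be represented by an element of $\GL_b(\overline{\Q_\ell})$ normalizing $\ZarGalId$, so the component group injects into $N_{\GL_b}(\ZarGalId)/\big(\ZarGalId \cdot Z_{\GL_b}(\ZarGalId)\big)$, which for each fixed $\ZarGalId$ in the finite list is a finite (even finite-index-in-finite) quotient. One must check this last quotient is genuinely finite — this follows because $\ZarGalId$ contains the relevant homotheties (it is of class $\mathcal{C}$, hence its radical surjects onto the scalars acting with weight $1$), so the centralizer $Z_{\GL_b}(\ZarGalId)$ is itself not too large and the normalizer modulo $\ZarGalId \cdot Z_{\GL_b}(\ZarGalId)$ is a subquotient of the automorphism group of a based root datum, which is finite. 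Assembling these bounds over the finite list of possible $\ZarGalId$ (which exists by Theorem \ref{thm:FinitelyManyGroups}) and incorporating the torus factor — whose monodromy group, being contained in a fixed maximal torus of $\GL_r$, has component group dividing a function of $r$ alone once one notes that the image lands in the subgroup generated by the image of inertia, which for a torus is finite of order dividing a bounded quantity — yields the desired $z(b)$.
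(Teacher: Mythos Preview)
Your approach is group-theoretic: bound $|\ZarGal/\ZarGalId|$ by mapping it into $N_{\GL_b}(\ZarGalId)/\big(\ZarGalId \cdot Z_{\GL_b}(\ZarGalId)\big)$ and then controlling the target by classifying the finitely many possible $\ZarGalId$. The central claim --- that this map is injective --- is false. Its kernel is $\big(\ZarGal \cap (\ZarGalId \cdot Z_{\GL_b}(\ZarGalId))\big)/\ZarGalId$, and nothing forbids $\ZarGal$ from meeting the centralizer $Z_{\GL_b}(\ZarGalId)$ in components outside $\ZarGalId$. Concretely, take $\ZarGalId=\mathbb{G}_m\cdot I\subseteq\GL_b$, which is of class~$\mathcal{C}$ (connected, reductive, all weights equal to $1$). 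Then $Z_{\GL_b}(\ZarGalId)=\GL_b$, so the target quotient is trivial; yet for any $g\in\GL_b$ whose image in $\operatorname{PGL}_b$ has order $n$, the group $\ZarGal=\langle \mathbb{G}_m\cdot I,\,g\rangle$ has component group of order $n$, which is unbounded. Your remark that ``$\ZarGalId$ contains the relevant homotheties, so the centralizer is not too large'' points in the wrong direction: containing the scalars makes the centralizer \emph{larger}, not smaller, and in the extreme case above it is all of $\GL_b$. So without further arithmetic input you cannot control the part of the component group that centralizes $\ZarGalId$. (Your treatment of the torus factor is likewise inconclusive; the correct bound there is Minkowski's theorem on finite subgroups of $\GL_r(\Z)$, via the faithful action of the component group on the character lattice, which you never invoke.)

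The paper's argument is entirely different and uses arithmetic rather than the internal structure of $\ZarGal$. The component group $\ZarGal/\ZarGalId$ is canonically $\Gal(K^{\mathrm{conn}}/K)$, where $K^{\mathrm{conn}}$ is the fixed field of $\rho_p^{-1}\big(\ZarGalId_{,p}(\Q_p)\cap\GL_b(\Z_p)\big)$. A theorem of Serre asserts that $K^{\mathrm{conn}}$ is \emph{independent of the prime $p$}. Hence the single integer $[K^{\mathrm{conn}}:K]$ divides the supernatural number $\#\GL_b(\Z_p)=p^{\infty}\cdot\#\GL_b(\F_p)$ for \emph{every} prime $p$, and therefore divides $z(b):=\gcd_p\big(p^{\infty}\cdot\#\GL_b(\F_p)\big)$, which is a genuine integer depending only on $b$. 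This sidesteps all the group-theoretic obstructions above and delivers uniformity in $\ell$ for free.
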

\begin{proof}
For every prime number $p$ let  $\rho_p : \Gal(\overline{K}/K) \to \GL_b(\Z_p)$ be the $p$-adic representation attached to $A$, and call $\ZarGal_{,p}$ the Zariski closure of its image. Let $K^{\text{conn}}$ be the finite extension of $K$ corresponding to $\rho_p^{-1}\left(\ZarGalId_{,p}(\Q_p) \cap \GL_b(\Z_p)\right)$. The degree $[K^{\text{conn}}: K]$ is the number of connected components of $\ZarGal_{,p}$.
It is known by work of Serre \cite{Serre_resum8485} (cf.\@ also \cite[Introduction]{MR1441234}) that $K^{\text{conn}}$ is independent of $p$, so the degree $[K^{\text{conn}}:K]$ divides the greatest common divisor of the supernatural numbers $\#\GL_b(\Z_p)=p^{\infty} \cdot \#\GL_b(\mathbb{F}_p)$, which is an integer depending only on $b$.
\end{proof}

\subsection{A theorem of Macintyre}
We apply a result of Macintyre \cite{Macintyre} which -- roughly speaking -- is a uniformity statement for integrals over $L_{P}$-definable sets. $L_{P}$ is a first-order language (in the sense of logic) that is similar to Denef's language used in \cite{MR902824}. It is obtained from the language of rings (i.e.~a language with $0, 1, +, \cdot, -$) by adding 1-ary predicates $P_n$, for $n \geqslant 2$. We make $\mathbb{Q}_\ell$ an $L_P$-structure by interpreting $P_n$ as the set of $n$-th powers in $\mathbb{Q}_\ell$ (i.e.~$P_n(x)$ is true iff $x$ is an $n$-th power in $\Q_\ell$). The valuation ring $\Z_\ell = \{ x : v_\ell(x) \geqslant 0\}$ is $L_P$-definable:

\begin{lem}\label{lemma:ValuationRing}
There is a formula $\Phi(x)$ in $L_P$ (independent of $\ell$) such that, when $\Q_\ell$ is interpreted as an $L_P$-structure as above, we have $\Phi(x) \Leftrightarrow v_\ell(x) \geqslant 0$.
\end{lem}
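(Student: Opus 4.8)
The plan is to exhibit an explicit first-order formula in $L_P$, valid uniformly over all $\Q_\ell$, that cuts out the valuation ring. The key classical input is the well-known fact that for an odd prime $\ell$, a unit $u \in \Z_\ell^\times$ is a square if and only if its reduction mod $\ell$ is a square in $\F_\ell^\times$, and more generally that the squares (or a fixed higher power) in $\Q_\ell$ form an open subgroup of finite index whose structure detects the valuation modulo a small integer. The standard trick, going back to the model-theory of $p$-adic fields, is the observation that for $\ell \neq 2$ one has $v_\ell(x) \geqslant 0$ if and only if $1 + \ell x^2$ (or a similar expression) lies in $P_n$ for a suitable $n$; the point is that $1 + \ell t$ is an $n$-th power in $\Z_\ell$ for every $t \in \Z_\ell$ once $n$ is coprime to $\ell$, whereas if $v_\ell(x)<0$ then $1+\ell x^2$ has negative even valuation and cannot be an $n$-th power when $n \nmid v_\ell(1+\ell x^2)$.

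First I would recall the precise statement: for $\ell$ odd and $n$ coprime to $\ell$, the subgroup $P_n \cap \Z_\ell^\times$ has finite index in $\Z_\ell^\times$, and $1 + \ell\Z_\ell \subseteq P_n$ because the $n$-th power map is a bijection on the pro-$\ell$ group $1+\ell\Z_\ell$ (as $\ell \nmid n$). Choosing, say, $n$ to be a fixed prime different from all small primes — or rather, since the formula must be independent of $\ell$, building the formula as a finite disjunction over the residues $\ell \bmod (\text{small modulus})$ and a separate clause for $\ell = 2$ — one writes $\Phi(x)$ as a Boolean combination of formulas of the shape $P_n(a(x))$ for finitely many polynomials $a$. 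For a uniform formula one typically takes something like: $\Phi(x)$ holds iff $\exists y\, \big( P_n(1 + \ell x y^{?}) \wedge \dots \big)$; the cleanest route is to first define "$v_\ell(x) \geqslant v_\ell(z)$" for the specific constant $z = \ell$ using $P_n$, exploiting that $x/\ell \in \Z_\ell$ iff some explicit $P_n$-membership holds. One then assembles the definition of $\Z_\ell$ itself.

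I would organize the argument in three steps. Step one: fix an auxiliary integer $n \geqslant 2$ and verify the algebraic fact that, for all primes $\ell$ with $\ell \nmid n$, one has $v_\ell(x)\geqslant 0 \iff P_n$-membership of some fixed term in $x$ and $\ell$; here I would use Hensel's lemma to lift $n$-th roots in $1 + \ell\Z_\ell$ and a valuation-parity argument for the converse. Step two: handle the finitely many exceptional primes $\ell \mid n$ (in particular $\ell = 2$) by adjusting the choice of predicate — e.g. using $P_{n'}$ for an $n'$ coprime to those primes, or combining two predicates — and absorb everything into one formula via a finite case split that is nonetheless a single $L_P$-sentence because $L_P$ contains the ring operations and hence can express "$\ell$ is such-and-such" only indirectly; the honest way is to choose a single $n$ (for instance a product of two distinct primes) so that at least one of two $P_n$-type conditions always works. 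Step three: conclude that the resulting $\Phi(x)$ is independent of $\ell$ and defines $v_\ell(x)\geqslant 0$.

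The main obstacle I anticipate is the uniformity in $\ell$: the naive choice of $n$ fails at the prime $\ell = n$ (or at $\ell = 2$), so the formula must be engineered to sidestep every bad prime simultaneously without referring to $\ell$ explicitly. The resolution is standard in the model theory of valued fields — one uses a predicate $P_n$ with $n$ chosen to have at least two distinct prime factors and takes a disjunction guaranteeing that for each $\ell$ at least one clause is "good" — but checking that the assembled Boolean combination is genuinely $\ell$-independent and correctly captures $\Z_\ell$ in every case is the delicate bookkeeping. This is essentially Macintyre's definability result for $p$-adic valuation rings in $L_P$, so I would ultimately cite \cite{Macintyre} for the existence of such $\Phi$ and only sketch the construction above to make the statement self-contained.
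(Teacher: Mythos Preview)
Your sketch correctly identifies the classical Hensel-based mechanism (e.g.\ $1+\ell\,\Z_\ell\subseteq P_n$ when $\ell\nmid n$), but as written it has a genuine gap: every concrete formula you propose, such as $P_n(1+\ell x^2)$ or ``$v_\ell(x)\geqslant v_\ell(z)$ for the specific constant $z=\ell$'', uses $\ell$ as a constant symbol, and $\ell$ is not in the language $L_P$. You flag this obstacle, but your proposed fix---taking $n$ with several prime factors and forming a disjunction---addresses a \emph{different} problem (the failure of Hensel at the primes dividing $n$) and does nothing to eliminate the occurrence of $\ell$ itself from the term. Without a concrete $\ell$-free formula the argument is incomplete; citing Macintyre at the end is of course legitimate, but then the three-step construction you outline is not actually the proof.

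The paper's proof resolves this with a trick you do not mention. One introduces a \emph{parameter} in place of $\ell$: set $R(x,y):=P_2(1+y^3x^4)$. A short computation (valid also for $\ell=2$, since $1+8\Z_2\subseteq(\Z_2^\times)^2$) shows that $R(x,-\ell)$ defines exactly $\Z_\ell$. Since $-\ell$ is not nameable, one instead lets $\Omega(y)$ be the $L_P$-formula expressing ``the set $\{x:R(x,y)\}$ is a valuation ring of $\Q_\ell$''; this is first-order in the language of rings. One then checks that $v_\ell(x)\geqslant 0$ is equivalent to $(\forall y)\bigl(\Omega(y)\Rightarrow R(x,y)\bigr)$, which is an $\ell$-free $L_P$-formula. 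This second-order-looking quantification over ``all parameters that yield a valuation ring'' is the missing idea; it replaces your case analysis entirely and handles $\ell=2$ automatically.
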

\begin{proof}
Expanding the argument in \cite[p.71]{Macintyre}, consider the formula $R(x,y)$ given by $P_2(1+y^3x^4)$ and let
$V := \{ y : R(x,y) \text{ defines a valuation ring} \}$.
The property $y \in V$ is expressible by a formula $\Omega(y)$ in $L_P$ because the property of defining a valuation ring can be expressed in the language of rings.
Indeed, $S\subseteq \Q_\ell$ is a valuation ring if and only if it is a subring that satisfies $(\forall x)(x \in S \text{ or } \exists x' : xx'=1, x' \in S)$.
When we interpret $\Q_\ell$ as an $L_P$-structure, $R(x,-\ell)$ defines precisely $\Z_\ell$. One can check that $v_\ell(x) \geqslant 0$ if and 
 only if $(\forall y \in V) R(x,y)$: it follows that $v_\ell(x) \geqslant 0$ is equivalent to $(\forall y)( \Omega(y) \Rightarrow R(x,y))$.
\end{proof}

\begin{cor}\label{cor:ValuationsInLP} There is an $L_P$-formula $\Psi(x,y)$ such that, when $\Q_\ell$ is interpreted as an $L_P$-structure, we have $\Psi(x,y) \Leftrightarrow v_\ell(x) \geqslant  v_\ell(y)$. Likewise, there are formulas that encode the statements $v_\ell(x)=0$, $v_\ell(x) > v_\ell(y)$, and $v_\ell(x)=v_\ell(y)+1$.
\end{cor}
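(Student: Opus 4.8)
The plan is to obtain every formula required by the statement as a Boolean combination with bounded quantification, starting from the formula $\Phi$ of Lemma~\ref{lemma:ValuationRing}, which defines $\Z_\ell$; none of the resulting formulas will mention $\ell$, so they are automatically uniform in $\ell$. The only point requiring care is the convention $v_\ell(0)=+\infty$.

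First I would set $\Psi(x,y) := \exists z\,\big(\Phi(z)\wedge x=zy\big)$. If $y\neq 0$, such a $z$ is necessarily $x/y$ and $\Phi(z)$ holds precisely when $v_\ell(x)\geqslant v_\ell(y)$; if $y=0$ the condition reads $x=0$, which matches $v_\ell(x)\geqslant v_\ell(0)=+\infty$. Hence $\Psi(x,y)\Leftrightarrow v_\ell(x)\geqslant v_\ell(y)$. From this, the predicate $U(x):=\Psi(x,1)\wedge\Psi(1,x)$ (legitimate since $1$ is a term of the language of rings) satisfies $U(x)\Leftrightarrow v_\ell(x)=0$, and $\neg\Psi(y,x)\Leftrightarrow v_\ell(x)>v_\ell(y)$; one checks directly that the boundary cases $x=0$ or $y=0$ cause no problem.

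For $v_\ell(x)=v_\ell(y)+1$ I would first produce an $L_P$-formula $\theta(t)$ expressing $v_\ell(t)=1$. The condition $v_\ell(t)\geqslant 1$ is $\Phi(t)\wedge\neg U(t)$, while $v_\ell(t)\geqslant 2$ is $\exists a\,\exists b\,\big(\Phi(a)\wedge\neg U(a)\wedge\Phi(b)\wedge\neg U(b)\wedge t=ab\big)$ (if $v_\ell(t)\geqslant 2$ write $t=\ell\cdot(t/\ell)$, both factors of valuation $\geqslant 1$; conversely a product of two elements of valuation $\geqslant 1$ has valuation $\geqslant 2$). Since $v_\ell$ takes integer values (or $+\infty$), I may set $\theta(t):=\Phi(t)\wedge\neg U(t)\wedge\neg\big(\exists a\,\exists b\,(\Phi(a)\wedge\neg U(a)\wedge\Phi(b)\wedge\neg U(b)\wedge t=ab)\big)$, which holds iff $v_\ell(t)=1$. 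Finally, $v_\ell(x)=v_\ell(y)+1$ is encoded by $\exists \pi\,\big(\theta(\pi)\wedge\Psi(x,\pi y)\wedge\Psi(\pi y,x)\big)$: indeed $v_\ell(\pi y)=v_\ell(y)+1$ whenever $v_\ell(\pi)=1$, and $\Psi(x,\pi y)\wedge\Psi(\pi y,x)$ says $v_\ell(x)=v_\ell(\pi y)$; when $y=0$ this again forces $x=0$, in agreement with $+\infty=+\infty+1$.

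There is essentially no obstacle beyond this bookkeeping: each displayed formula manifestly lies in $L_P$ (using only ring operations, the predicates built into $\Phi$, and quantifiers), and all the equivalences follow from the elementary arithmetic of the $\ell$-adic valuation. The single recurring subtlety, which I have flagged at each step, is the behaviour at the zero element.
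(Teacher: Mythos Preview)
Your proof is correct and follows the same overall strategy as the paper: build everything from the formula $\Phi$ defining $\Z_\ell$. The formulas you give for $v_\ell(x)\geqslant v_\ell(y)$ and $v_\ell(x)=0$ coincide (up to cosmetic differences) with the paper's. For the last two predicates the paper takes slightly shorter routes: it encodes $v_\ell(x)>v_\ell(y)$ by $\exists z\,(v_\ell(z)>0 \wedge x=yz)$ rather than by $\neg\Psi(y,x)$, and it expresses $v_\ell(x)=v_\ell(y)+1$ directly as ``$v_\ell(x)>v_\ell(y)$ and $(\forall z)(v_\ell(z)>0 \Rightarrow v_\ell(yz)\geqslant v_\ell(x))$'' without first isolating a uniformizer. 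Your detour through $\theta(t)\Leftrightarrow v_\ell(t)=1$ is a bit longer but equally valid, and your treatment of the boundary case $y=0$ is in fact more careful than the paper's.
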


\begin{proof}
We can express $v_\ell(x) \geqslant v_\ell(y)$ by $\exists z : \Phi(z) \wedge (x=yz)$ and $v_\ell(x)=0$ by $\exists y : \Phi(x) \wedge \Phi(y) \wedge (xy=1)$.
The property $v_\ell(z) > 0$ means $v_\ell(z) \geqslant 0 \wedge v_\ell(z) \neq 0$ while $v_\ell(x) > v_\ell(y)$ means $\exists z : v_\ell(z)>0, x=yz$.
Finally, $v_\ell(x)=v_\ell(y)+1$ is equivalent to $v_\ell(x) > v_\ell(y)$ and $(\forall z)(v_\ell(z) > 0 \Rightarrow v_\ell(yz) \geqslant v_\ell(x))$.
\end{proof}

Let $\Phi$ be a formula in $L_{P}$ with $m+m'$ free variables (such a formula can be interpreted in $\mathbb{Q}_\ell$ for every prime $\ell$). Define 
\begin{equation}\label{lambda}
\mathcal A:=\{ (X,\lambda) \in \mathbb{Q}_\ell^m \times \mathbb{Q}_\ell^{m'} : \Phi(X, \lambda) \}\qquad \text{and}\qquad \mathcal A(\lambda):=\{ X  : (X,\lambda) \in \mathcal A\} \subseteq \Q_\ell^m\,.
\end{equation}
Such a set $\mathcal{A}$ is said to be $L_P$-definable. 
We also consider functions $\mathbb{Q}_\ell^n\rightarrow \mathbb{Z} \cup \{+\infty\}$. 
We deal only with \emph{$L_{P}$-simple} functions of the form $v_\ell(f(x_1,\ldots,x_{n}))$ where $f \in \mathbb{Z}[x_1,\ldots,x_{n}]$ (see \cite[Definition after Lemma 18]{Macintyre}, replacing $L_{PD}$ by $L_P$).

\begin{thm}{(Macintyre \cite{Macintyre})}\label{thm:Mcintyre}
Suppose $\mathcal A$ is an $L_{P}$-definable set, and $\alpha, \alpha'$ are $L_{P}$-simple functions.   We have
\[
\int_{\mathcal A(\lambda)} \ell^{-\alpha(X,\lambda)s-\alpha'(X,\lambda)} \, dX = \frac{\sum_{1 \leqslant i,i' \leqslant \varepsilon(\lambda)} \gamma_{i,i'} \, \ell^{-is-i'}}{c \prod_{1\leqslant j<h} \big(1-\ell^{-a_js-a'_j}\big)}
\]
whenever the integral is finite, where 
\begin{enumerate}
\item $\varepsilon$ is $L_P$-simple with values in $\mathbb{N}$, and the $\gamma_{i,i'}$ are integers;
\item $h$ is a constant independent of $\ell$; 
\item the $a_j, a'_j$ are natural numbers, bounded by some constant $\tau$  independent of $\ell$;
\item $c$ divides $(\ell(\ell-1))^m$, where $m$ is the dimension of the integration space;
\item the numbers $a_j, a'_j, \tau, h$ and $c$, as well as the function $\varepsilon$, only depend on the formula defining $\mathcal{A}$ and on the polynomials defining $\alpha, \alpha'$ (in particular, they are independent of $\lambda$ and $\ell$).
\end{enumerate}
\end{thm}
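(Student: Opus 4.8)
The statement is taken from Macintyre's paper \cite{Macintyre}, so the plan is not to reprove it from scratch but to explain how its precise shape is extracted from the machinery there: I would run the standard three-step procedure for $\ell$-adic integration (uniform cell decomposition; reduction to monomial integrals; summation of geometric series) while checking at each stage that every constant is read off from the data $\Phi,\alpha,\alpha'$ alone, and is therefore independent of $\lambda$ and of $\ell$.

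First I would invoke the uniform-in-$\ell$ cell decomposition that is the core of \cite{Macintyre}: the $L_P$-definable family $\mathcal A$ of \eqref{lambda} admits a partition into finitely many cells, whose number and whose defining data depend only on the formula $\Phi$. (For a fixed finite set of small primes the decomposition may require separate treatment; this is harmless, since for those $\ell$ one simply enlarges the constant $c$ so as to absorb the finitely many offending denominators.) After a definable change of coordinates, on each cell the set $\mathcal A(\lambda)$ becomes a box $\prod_{i=1}^m\{\,x_i\in\Q_\ell : v_\ell(x_i)\in E_i,\ \mathrm{ac}(x_i)\in P_i\,\}$, where $E_i\subseteq\Z$ is an arithmetic-progression-type set given by $L_P$-simple functions of $\lambda$ and $P_i$ constrains the angular component of $x_i$; moreover the Jacobian of the change of coordinates and the two $L_P$-simple functions $\alpha,\alpha'$ become, up to units, monomials in the $x_i$.

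Next I would evaluate the contribution of a single cell, which now factors as a product of $m$ one-variable integrals. Using $\Q_\ell^\times\cong\ell^{\Z}\times\mu_{\ell-1}\times(1+\ell\Z_\ell)$, the integral along the $1+\ell\Z_\ell$-direction contributes a denominator dividing $\ell$, the sum over $\mu_{\ell-1}$ an integer of absolute value at most $\ell-1$, and the sum over the valuation variable a geometric series $\sum_{j\in E_i}\ell^{-(a_ij)s-(a'_ij)-b_i}$ with denominator $1-\ell^{-a_is-a'_i}$, where $a_i,a'_i$ are bounded by the degrees occurring in $\alpha,\alpha'$ and in the Jacobian. Multiplying over $i$ and then summing the finitely many cell contributions over a common denominator produces exactly the asserted form: the denominator is $c\prod_{1\leqslant j<h}(1-\ell^{-a_js-a'_j})$ with $c\mid(\ell(\ell-1))^m$ (one factor $\ell(\ell-1)$ per integration variable, from the $1+\ell\Z_\ell$ and $\mu_{\ell-1}$ parts), $h$ bounded by the number of cells times $m$, the $a_j,a'_j$ bounded by some $\tau=\tau(\Phi,\alpha,\alpha')$, the coefficients $\gamma_{i,i'}\in\Z$ arising from summing finitely many integer contributions, and the numerator degrees controlled by an $L_P$-simple function $\varepsilon(\lambda)$. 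Since every quantity here comes from $\Phi,\alpha,\alpha'$ and from the (uniform) cell decomposition, none of $a_j,a'_j,\tau,h,c,\varepsilon$ depends on $\lambda$ or $\ell$, establishing points (1)--(5).

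The main obstacle is entirely in the first step: producing a cell decomposition that works uniformly in $\ell$. This is the genuinely non-trivial content of \cite{Macintyre}, building on Denef's rationality results \cite{MR902824} and ultimately on the model theory of $p$-adically closed fields; once that input is in hand, the remaining two steps are bookkeeping about Teichm\"uller decompositions and geometric series, and I would carry them out directly, referring to \cite{Macintyre} for the technical details.
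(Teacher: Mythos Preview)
Your sketch is a reasonable outline of the mechanism behind Macintyre's theorem (uniform cell decomposition, reduction to monomial integrals, summation of geometric series, with the $\ell(\ell-1)$ factors coming from the angular-component and unit-ball contributions), and nothing in it is wrong as a plan. However, you are doing considerably more than the paper does. The paper gives no argument at all for this theorem: it simply records that the statement, though not in this exact form, is fully proved in \cite{Macintyre}, and then points to specific locations there for each item --- Corollary~2 on p.~70 for the main rationality statement, Theorem~19 for the constancy of $h$ in~(2), \S7.2.1 for the bound $\tau$ on the $a_j,a'_j$ in~(3), and \S7.2.2 for the divisibility condition on $c$ in~(4). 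So the ``proof'' in the paper is a one-sentence citation with pinpoint references, not a reconstruction of the cell-decomposition argument. Your proposal is a genuine (if high-level) proof sketch, whereas the paper treats the result as a black box and only indicates where in \cite{Macintyre} each clause is established.
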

Though not stated in this exact form, this theorem is fully proved in \cite{Macintyre}: the main result is Corollary 2 on p.70, (2) is a direct consequence of Theorem 19 of op.cit., (3) is proved in §7.2.1, and (4) is proved in §7.2.2.

\subsection{Rationality of $\ell$-adic integrals}
Take  $N:=N(b),D:=D(b)$ as in Theorem \ref{thm:FinitelyManyGroups} and define the set 
\[
\mathcal{P}:=\operatorname{Mat}_b(\Z_\ell) \times \left( \Z_\ell[x_{ij}]_{1 \leqslant i,j \leqslant b, \deg \leqslant D} \right)^N \times \Z_\ell\,.
\]
An element of $\mathcal{P}$ will be written as
$\lambda:=(T;f_1,\ldots,f_N; w_n)$ 
where $T \in \operatorname{Mat}_b(\Z_\ell)$, $f_1$ to $f_N$ are polynomials in $\Z_\ell[x_{ij}]$ (for $1 \leqslant i,j \leqslant b$) of degree at most $D$, and $w_n$ is an $\ell$-adic integer which we only use through its valuation. For $z\in \mathbb Z_\ell$ we call reduction modulo $z$ the identity map if $z=0$ and the reduction modulo $\ell^{v_\ell(z)}$ otherwise.
The set 
\begin{equation}\label{defi-A}
\resizebox{1\hsize}{!}{
$\mathcal A=\left \{ (x, w; \lambda) \in \operatorname{Mat}_b(\Z_\ell) \times \Z_\ell \times \mathcal{P} \bigm\vert \exists M \in \operatorname{Mat}_b(\Z_\ell) : \!\!\!\!\!\!\begin{array}{c} f_1(M)=\cdots=f_N(M)=0 \\ v_\ell(\det(M)) = 0\\ M \equiv x \pmod{w} \\ M \equiv I \pmod{w_n} \\ v_\ell(\det(TM-I))+1=v_\ell(w)  \end{array} \right \}
$}
\end{equation}
 is $L_{P}$-definable: indeed, the vanishing of $f_i(M)$ is simply given by the vanishing of a suitable family of polynomials in the entries of $M$ and of $\lambda$ (this can be expressed in the language of rings), and the congruence conditions can be expressed in $L_P$ by Lemma \ref{lemma:ValuationRing} and Corollary \ref{cor:ValuationsInLP}. We define $\mathcal A(\lambda)$ as in \eqref{lambda}.

\begin{prop}\label{prop:IntegralIsUniversal}
Fix $b\geqslant 1$. For every algebraic subgroup $\algQ$ of $\GL_{b,\Q_\ell}$ of class $\mathcal{C}$, for every integer $n\geqslant 0$, and for every $T \in \GL_b(\Z_\ell)$, there exists $\lambda \in \mathcal{P}$ such that $\mathcal A(\lambda)$ equals
 \begin{equation}\label{eq:SetD}
\mathcal D_{n}:=\left\{ (x,w) \in \operatorname{Mat}_b(\Z_\ell) \times \Z_\ell \bigm\vert \exists M \in \algQ(\Z_\ell) : \begin{array}{c} M \equiv x \pmod{w} \\  M \equiv I \pmod{\ell^n} \\ \det_\ell(TM-I)+1=v_\ell(w) \end{array} \right\}\,.
 \end{equation}
 \end{prop}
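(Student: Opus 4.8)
The plan is to produce the parameter $\lambda=(T;f_1,\ldots,f_N;w_n)$ explicitly from the data $(\algQ,n,T)$ and then check that the defining conditions of $\mathcal A(\lambda)$ in \eqref{defi-A} match those of $\mathcal D_n$ in \eqref{eq:SetD}. First I would observe that $\algQ$ is of class $\mathcal C$, so by Theorem \ref{thm:FinitelyManyGroups} its ideal in $R=\Q_\ell[x_{ij},y]/(\det(x_{ij})y-1)$ is generated by at most $N=N(b)$ polynomials of degree at most $D=D(b)$; after clearing denominators I may assume these generators lie in $\Z_\ell[x_{ij},y]$. The subtlety is that the defining equations of $\mathcal A$ only allow polynomials in the entries $x_{ij}$ (not in the auxiliary variable $y$), so I need to eliminate $y$: I would substitute a suitable power of $\det(x_{ij})$ for the $y$-powers appearing in each generator, obtaining polynomials $g_k(x_{ij})$ such that, for $M\in\GL_b(\Z_\ell)$, one has $g_k(M)=0$ for all $k$ if and only if $M\in\algQ(\Z_\ell)$. (Here the condition $v_\ell(\det M)=0$ in \eqref{defi-A} is exactly what makes this substitution reversible: it guarantees $\det M$ is a unit, so one can pass back and forth between the affine chart with $y=\det(x_{ij})^{-1}$ and the polynomials $g_k$.) Then I set $f_1,\ldots,f_N$ equal to these $g_k$, padding with the zero polynomial if fewer than $N$ are needed.

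Next I would fix the remaining two components of $\lambda$: take $w_n\in\Z_\ell$ to be any element with $v_\ell(w_n)=n$ (e.g.\ $w_n=\ell^n$), so that ``$M\equiv I\pmod{w_n}$'' in \eqref{defi-A} becomes ``$M\equiv I\pmod{\ell^n}$'' as required — with the convention spelled out in the paper that reduction modulo $z=0$ is the identity map, the case $n=0$ (where the congruence is vacuous) is handled by taking $w_n=0$. The matrix $T\in\GL_b(\Z_\ell)$ is simply carried over unchanged, so the condition $v_\ell(\det(TM-I))+1=v_\ell(w)$ in \eqref{defi-A} is literally $\det_\ell(TM-I)+1=v_\ell(w)$ in \eqref{eq:SetD}. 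With these choices, the bound variable $M$ ranges over $\operatorname{Mat}_b(\Z_\ell)$ subject to $f_1(M)=\cdots=f_N(M)=0$ and $v_\ell(\det M)=0$, which by construction is precisely $M\in\algQ(\Z_\ell)$; the conditions $M\equiv x\pmod w$, $M\equiv I\pmod{\ell^n}$, and the valuation equation are then identical in the two descriptions, so $\mathcal A(\lambda)=\mathcal D_n$.

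The step I expect to require the most care is the elimination of the variable $y$ and the verification that the resulting polynomial conditions cut out exactly $\algQ(\Z_\ell)$ inside $\GL_b(\Z_\ell)$, rather than a larger set: one must check that imposing $v_\ell(\det M)=0$ together with $g_k(M)=0$ does not accidentally admit matrices outside $\algQ$ (for instance matrices where the cleared-denominator relation holds for a spurious reason). This is genuinely a routine but slightly delicate algebra-geometry bookkeeping exercise — it amounts to noting that on the open locus $\det\neq 0$ the chart $(x_{ij},y)\mapsto x_{ij}$ with $y=\det(x_{ij})^{-1}$ is an isomorphism onto $\GL_b$, so the ideal generated by the $g_k$ in $\Z_\ell[x_{ij},\det(x_{ij})^{-1}]$ is exactly the image of the ideal of $\algQ$, giving the claimed equivalence for $\Z_\ell$-points. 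Everything else is a matter of matching up the syntactic conditions defining $\mathcal A$ and $\mathcal D_n$, and presenting the choice of $\lambda$ cleanly.
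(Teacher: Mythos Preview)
Your approach is essentially the paper's: invoke Theorem~\ref{thm:FinitelyManyGroups} to obtain defining polynomials $f_1,\ldots,f_N$ for $\algQ$, set $w_n=\ell^n$, carry $T$ over, and match the conditions. You are in fact more careful than the paper on two points it glosses over: clearing denominators to land in $\Z_\ell[x_{ij}]$, and eliminating the auxiliary variable $y$ (since $\mathcal P$ only allows polynomials in the $x_{ij}$). Your remark that this elimination is harmless on the locus $v_\ell(\det M)=0$ is correct; just note that multiplying by powers of $\det(x_{ij})$ may raise the degree, so one should enlarge $D(b)$ accordingly---still a bound depending only on $b$.

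One small slip: your handling of $n=0$ via $w_n=0$ is backwards. Under the paper's convention, reduction modulo $0$ is the \emph{identity map}, so ``$M\equiv I\pmod{0}$'' means $M=I$, not the vacuous condition. The paper's uniform choice $w_n=\ell^n$ already works for $n=0$: reduction modulo $\ell^0=1$ sends everything to the zero ring, making the congruence vacuous.
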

\begin{proof}
By Theorem \ref{thm:FinitelyManyGroups}, there exist polynomials $f_1,\ldots,f_N$ of degree at most $D$ which define $\algQ$ in $\GL_{b,\Q_\ell}$. The conditions $M \in \operatorname{Mat}_b(\Z_\ell)$, $v_\ell(\det(M))=0$ and  $f_1(M)=\ldots=f_N(M)=0$ give $M \in \algQ(\Z_\ell)$. The other conditions for $\mathcal A(\lambda)$ match  those of $\mathcal{D}_n$ if we take $w_n=\ell^n$.
\end{proof}

\begin{defi} Let $\mathcal D_{n}$ be as in \eqref{eq:SetD}. We set
$
\displaystyle I_{n}(s) := \int_{\mathcal D_{n}} |w|^s \, dx \, dw\,.
$
\end{defi}

\begin{lem}\label{lemma:UniversalDenominatorRationalFunction}
Fix $b\geqslant 1$. There exists a polynomial $r(t,u) \in \Z[t,u]$ such that for every $\ell$, for every algebraic subgroup $\algQ$ of $\GL_{b,\Q_\ell}$ of class $\mathcal{C}$, for every $T \in \GL_b(\Z_\ell)$, for every integer $n\geqslant 0$ and for every real number $s>0$ we have 
\[
I_{n}(s) = \frac{\Psi_n (\ell^{-s})}{r(\ell^{-s},\ell^{-1})}\,,
\]
where $\Psi_n(t)$ is a polynomial in $\mathbb{Z}[1/\ell][t]$ which may depend on $n,\ell,\algQ,T$.
\end{lem}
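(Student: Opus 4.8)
The plan is to reduce the computation of $I_{n}(s)$ to Macintyre's Theorem \ref{thm:Mcintyre}. Given $\algQ$ (of class $\mathcal C$), $T$ and $n$, I would first invoke Proposition \ref{prop:IntegralIsUniversal} to produce a parameter $\lambda \in \mathcal P$ with $\mathcal A(\lambda) = \mathcal D_{n}$. Since $|w|^{s} = \ell^{-s\, v_\ell(w)}$, this rewrites the integral as
\[
I_{n}(s) = \int_{\mathcal A(\lambda)} \ell^{-s\, v_\ell(w)}\, dx\, dw\,,
\]
which is precisely of the shape handled by Theorem \ref{thm:Mcintyre}: the integrand is $\ell^{-\alpha(x,w)s-\alpha'(x,w)}$ with the $L_{P}$-simple functions $\alpha(x,w)=v_\ell(w)$ and $\alpha'\equiv 0$; the domain $\mathcal A(\lambda)$ is $L_{P}$-definable by the formula \eqref{defi-A}; and the integral is finite for $s>0$ because $|w|^{s}\leqslant 1$ on the compact set $\operatorname{Mat}_b(\Z_\ell)\times\Z_\ell$. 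Here I would emphasise that \eqref{defi-A} is one and the same $L_{P}$-formula for every prime $\ell$ — this is exactly what Lemma \ref{lemma:ValuationRing}, Corollary \ref{cor:ValuationsInLP} and Theorem \ref{thm:FinitelyManyGroups} were arranged for — and that it depends only on $b$ through $D(b)$ and $N(b)$.

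Macintyre's theorem then yields
\[
I_{n}(s) = \frac{\sum_{1 \leqslant i, i' \leqslant \varepsilon(\lambda)} \gamma_{i,i'}\, \ell^{-is - i'}}{c \prod_{1 \leqslant j < h} \big(1 - \ell^{-a_{j} s - a'_{j}}\big)}\,,
\]
where the $\gamma_{i,i'}$ are integers, $c$ divides $(\ell(\ell-1))^{m}$ with $m=b^{2}+1$ the dimension of the integration space, and, by item (5) of the theorem, the constant $h$ and the natural numbers $a_{j},a'_{j}$ depend only on the formula \eqref{defi-A} and on $\alpha,\alpha'$, hence only on $b$. Writing $t=\ell^{-s}$ and $u=\ell^{-1}$, the numerator becomes $\sum_{i}\big(\sum_{i'}\gamma_{i,i'}\,\ell^{-i'}\big)\,t^{i}$, a polynomial in $t$ with coefficients in $\Z[1/\ell]$. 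To obtain a universal denominator I would absorb $c$ as follows: from $c \mid \ell^{b^{2}+1}(\ell-1)^{b^{2}+1}$ the integer $e:=\ell^{b^{2}+1}(\ell-1)^{b^{2}+1}/c$ is well-defined, and since $\ell-1=\ell(1-\ell^{-1})$ one gets $c^{-1}=\big(e/\ell^{2(b^{2}+1)}\big)\cdot(1-\ell^{-1})^{-(b^{2}+1)}$ with $e/\ell^{2(b^{2}+1)}\in\Z[1/\ell]$. Then setting
\[
r(t,u):=(1-u)^{b^{2}+1}\prod_{1 \leqslant j < h}\big(1 - t^{a_{j}} u^{a'_{j}}\big)\in\Z[t,u]\,,
\qquad
\Psi_{n}(t):=\frac{e}{\ell^{2(b^{2}+1)}}\sum_{i}\Big(\sum_{i'}\gamma_{i,i'}\,\ell^{-i'}\Big)t^{i}\in\Z[1/\ell][t]\,,
\]
and noting that $r$ depends only on $b$ (since $h$ and the $a_{j},a'_{j}$ do), one checks directly that $I_{n}(s)=\Psi_{n}(\ell^{-s})/r(\ell^{-s},\ell^{-1})$, which is the claim.

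I expect the substantive content of the argument to lie entirely in the results being invoked — Macintyre's uniformity theorem, the bounds $D(b),N(b)$ for groups of class $\mathcal C$, and Proposition \ref{prop:IntegralIsUniversal} — so that the present lemma is essentially bookkeeping. The one point genuinely requiring care is the passage from the bound ``$c \mid (\ell(\ell-1))^{b^{2}+1}$'' to a truly $\ell$-independent polynomial $r$: the $\ell$-power part of $c$ is harmless, as it is absorbed into $\Psi_{n}\in\Z[1/\ell][t]$, whereas the prime-to-$\ell$ part, which divides $(\ell-1)^{b^{2}+1}$ and varies with $\ell$, must be cleared by the factor $(1-u)^{b^{2}+1}$ of $r$ — this works precisely because $\ell-1=\ell(1-\ell^{-1})$. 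A secondary (but essential) point is to be sure that the $L_{P}$-formula \eqref{defi-A} really is independent of $\ell$, so that Macintyre's output data $h,a_{j},a'_{j}$, and hence $r$, are independent of $\ell$.
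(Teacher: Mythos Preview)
Your proposal is correct and follows essentially the same route as the paper: invoke Proposition~\ref{prop:IntegralIsUniversal} to put $I_n(s)$ in the form treated by Macintyre's theorem with $\alpha=v_\ell(w)$, $\alpha'=0$, then absorb the factor $c\mid(\ell(\ell-1))^{b^2+1}$ into the numerator (its $\ell$-part) and into a factor $(1-u)^{b^2+1}$ of $r$ (its prime-to-$\ell$ part), using that the remaining denominator data depend only on the $L_P$-formula \eqref{defi-A}, hence only on $b$. The only cosmetic difference is that the paper takes the safer choice $r(t,u)=(1-u)^{b^2+1}\prod_{0\leqslant a,a'\leqslant\tau}(1-t^{a}u^{a'})^{h}$, while you use $\prod_{1\leqslant j<h}(1-t^{a_j}u^{a'_j})$ directly; both are justified by item~(5) of Theorem~\ref{thm:Mcintyre} as stated.
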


\begin{proof} By Proposition \ref{prop:IntegralIsUniversal} there exists $\lambda$ such that $\mathcal{D}_n=\mathcal A(\lambda)$, and the integral $I_n(s)$ is finite because $|w|^s\leqslant 1$ and the integration space has finite measure.
Theorem \ref{thm:Mcintyre} (choosing $\mathcal A$ as in \eqref{defi-A}, 
$\alpha= v_\ell(w)$ and $\alpha'=0$) then gives 
\[
I_{n}(s) = \frac{\Psi(\ell^{-s})}{c \prod_{1\leqslant j<h} \big(1-\ell^{-a_js-{a'_j}}\big)}\,,
\]
where we have set $\Psi(t):=\sum_{1 \leqslant i,i' \leqslant \varepsilon(\lambda)} \gamma_{i,i'} \ell^{-i'} t^{i} \in \Z[1/\ell][t]$. Again by Theorem \ref{thm:Mcintyre}, the denominator divides $r(\ell^{-s},\ell^{-1})$ up to a power of $\ell$, where $$r(t,u) := (1-u)^{b^2+1} \prod_{0 \leqslant a, a' \leqslant \tau} (1-t^{a}u^{a'})^h\,.$$
Notice that we can reabsorb the power of $\ell$ in the numerator. To finish the proof, recall that $\tau$ and $h$ only depend on the polynomial defining $\alpha$ (which is in particular independent of $\ell$) and on the formula defining $\mathcal A$. Since by construction the latter depends only on $b$, this establishes our claim.
\end{proof}

\subsection{Uniform bound for the denominators}
Let $\algQ$ be an algebraic subgroup of $\GL_{b,\Q_\ell}$ of class $\mathcal{C}$.
Fix some matrix $T$ in $\GL_{b}(\Z_\ell)$. If $n,n'\geqslant 0$  and $m\geqslant 1$ are integers,
we define 
\begin{equation}\label{eq:SetS}
S_{n,n'}(m) :=  \left\{ M \bmod {\ell^m} \bigm\vert 
\begin{array}{c} M \in \algQ(\Z_{\ell}) \\ M \equiv I \pmod{\ell^n} \\ \det_\ell(TM-I) = n' \end{array}
 \right\}
 \end{equation}
 and $N_{n,k}:=\# S_{n,k-1}(k)$ (we also set $N_{n,0}=0$).
We consider the Poincar\'e series
 \begin{equation}\label{Poinc}
 P_{n}(t) := \sum_{k \geqslant 0} N_{n,k} \, t^k = \frac{\ell}{\ell-1} \cdot \frac{\Psi_{n}(t \ell^{b^2+1})}{r(t \ell^{b^2+1}, \ell^{-1})}
\end{equation}
where the second equality follows from Lemma \ref{lemma:UniversalDenominatorRationalFunction} and the following computation:
\[
\begin{aligned}
I_{n}(s) & = \sum_{k=0}^\infty \ell^{-ks} \int_{\mathcal D_n \cap \{v_\ell(w)=k\}} dx \, dw \\
& = \sum_{k=0}^\infty \ell^{-ks} \left( \int_{x : (x,\ell^k) \in \mathcal D_n} dx \right) \left( \int_{w : v_\ell(w)=k}  dw \right) \\
& = \sum_{k=0}^\infty \ell^{-ks} \cdot \frac{N_{n,k}}{\ell^{kb^2}} \cdot \left( \ell^{-k}-\ell^{-k-1} \right) \\
& = \frac{\ell-1}{\ell} P_{n}(\ell^{-(s+1+b^2)}).
\end{aligned}
\]

\begin{lem}\label{cool}
Fix $b\geqslant 1$, and let $r(t,u)$ be as in Lemma \ref{lemma:UniversalDenominatorRationalFunction}. For every integer $n_0\geqslant 0$ we have:
\[
(\ell-1) \cdot r(\ell^{b^2-d},\ell^{-1})\cdot \sum_{k \geqslant n_0} \ell^{-dk-d-k} N_{n,k+1} \in \mathbb{Z}[1/\ell]\,.
\]
\end{lem}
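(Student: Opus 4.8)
The plan is to recognise the left-hand side as a tail of the Poincar\'e series $P_n(t)$ of \eqref{Poinc}, evaluated at $t=\ell^{-(d+1)}$ after multiplying away its denominator $r$, and then to read off integrality from the shape of $P_n$ supplied by Lemma \ref{lemma:UniversalDenominatorRationalFunction}.

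First I would reindex the sum. Putting $j=k+1$ and using the identity $-d(j-1)-d-(j-1)=-(d+1)j+1$, the sum $\sum_{k\ge n_0}\ell^{-dk-d-k}N_{n,k+1}$ becomes $\ell\sum_{j\ge n_0+1}\bigl(\ell^{-(d+1)}\bigr)^{j}N_{n,j}$; adding and subtracting the finitely many terms with $j\le n_0$ rewrites it as $\ell\bigl(P_n(\ell^{-(d+1)})-\sum_{j=0}^{n_0}\ell^{-(d+1)j}N_{n,j}\bigr)$. Next I would multiply through by $(\ell-1)\,r(\ell^{b^2-d},\ell^{-1})$: since $\ell^{-(d+1)}\cdot\ell^{b^2+1}=\ell^{b^2-d}$, formula \eqref{Poinc} gives $(\ell-1)\,r(\ell^{b^2-d},\ell^{-1})\,P_n(\ell^{-(d+1)})=\ell\,\Psi_n(\ell^{b^2-d})$, so the quantity under consideration equals
\[
\ell^2\,\Psi_n(\ell^{b^2-d})\;-\;\ell(\ell-1)\,r(\ell^{b^2-d},\ell^{-1})\sum_{j=0}^{n_0}\ell^{-(d+1)j}N_{n,j}.
\]
This lies in $\mathbb{Z}[1/\ell]$ by Lemma \ref{lemma:UniversalDenominatorRationalFunction}, where $\Psi_n(t)\in\mathbb{Z}[1/\ell][t]$ and $r(t,u)\in\mathbb{Z}[t,u]$, together with the fact that the $N_{n,j}$ are non-negative integers; both summands are then manifestly in $\mathbb{Z}[1/\ell]$.

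The main obstacle is purely one of formalisation: $r(\ell^{b^2-d},\ell^{-1})$ can vanish — for instance whenever a factor $1-t^{a}u^{a'}$ of $r$ satisfies $a(b^2-d)=a'$ — in which case $P_n$ has a pole at $\ell^{-(d+1)}$ and the step above is only formal. I would handle this by working throughout with the polynomial $R(u):=r(u\ell^{b^2+1},\ell^{-1})$: since $R(0)\neq 0$, identity \eqref{Poinc} holds as an identity of power series, hence $P_n(u)\,R(u)=\tfrac{\ell}{\ell-1}\Psi_n(u\ell^{b^2+1})$ is an identity of \emph{polynomials}. Consequently $R(u)$ times any tail of $P_n(u)$ is again a polynomial, which may be evaluated at $u=\ell^{-(d+1)}$ with no convergence or pole issue, and its value is exactly the displayed expression. (Equivalently, whenever the tail sum actually converges one checks $r(\ell^{b^2-d},\ell^{-1})\neq 0$ and argues directly; the remaining degenerate cases are precisely those handled by this polynomial reformulation.) Apart from this point, the proof is just bookkeeping of powers of $\ell$ in the reindexing step.
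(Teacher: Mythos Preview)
Your proof is correct and is essentially identical to the paper's own argument: the paper also reindexes to rewrite the tail as $\ell\,P_n(\ell^{-(d+1)})$ minus a finite sum, then multiplies by $(\ell-1)\,r(\ell^{b^2-d},\ell^{-1})$ and invokes \eqref{Poinc} to reduce to $\ell^2\,\Psi_n(\ell^{b^2-d})\in\mathbb{Z}[1/\ell]$. Your added discussion of the possible vanishing of $r(\ell^{b^2-d},\ell^{-1})$ and the polynomial-identity workaround is more careful than the paper (which passes over this point in silence), but note that the series $P_n(\ell^{-(d+1)})$ in fact converges absolutely since $N_{n,k}\leqslant\#\algRid(k)\sim c(\algQ)\ell^{kd}$, so the manipulation is already analytically justified; in any case, when $r(\ell^{b^2-d},\ell^{-1})=0$ the statement is trivial.
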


\begin{proof}
We can write
\[
\sum_{k \geqslant n_0} \ell^{-dk-d-k} N_{n,k+1}
= \ell \cdot P_n(\ell^{-(d+1)}) - \ell \cdot \sum_{0 \leqslant i \leqslant n_0} \ell^{-i(d+1)} N_{n,i}.
\]
The finite sum is obviously in $\Z[1/\ell]$. We may conclude by applying \eqref{Poinc}: 
$$\ell \cdot P_n(\ell^{-(d+1)})\cdot (\ell-1) \cdot r(\ell^{b^2-d},\ell^{-1}) =\ell^2 \cdot \Psi_n(\ell^{b^2-d})\in \Z[1/\ell]\,.$$
\end{proof}

\begin{prop}\label{prop:IntegralReformulation}
Fix $b\geqslant 1$. There exists a polynomial $q_b(t) \in \Z[t]$ with the following property. If $\algQ$ is an algebraic subgroup of $\GL_{b,\Q_\ell}$ of class $\mathcal{C}$, $n\geqslant 0$ is an integer, and $T$ is any matrix in $\GL_{b}(\Z_\ell)$, we have:
\[
q_b(\ell) \cdot \int_{\{ M \in \algQ(\Z_\ell) \bigm\vert \; M \equiv I \,(\!\bmod{\ell^n}) \}} \ell^{-\det_\ell(T M-I)} \, d\mu_{\algQ(\Z_\ell)}(M) \; \in \mathbb{Z}[1/\ell].
\]
\end{prop}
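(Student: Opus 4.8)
The plan is to express the integral in the statement in terms of the Poincaré-type series $P_n(t)$ introduced in \eqref{Poinc}, and then apply Lemma \ref{cool} to control its denominator uniformly. First I would unwind the definition of the measure: the normalized Haar measure on $\algQ(\Z_\ell)$ assigns to the coset $\{M \equiv M_0 \pmod{\ell^k}\}$ the value $1/\#\algRid(k)$ for $k$ large, so the integral over the set $\{M \in \algQ(\Z_\ell) : M \equiv I \pmod{\ell^n},\ \det_\ell(TM-I)=n'\}$ of the constant $\ell^{-n'}$ equals $\ell^{-n'} \cdot N_{n,n'+1}/(c(\algQ)\ell^{(n'+1)\dim\algQ})$ once $n'$ is large enough that the valuation can be detected mod $\ell^{n'+1}$ — here I will need to be slightly careful about the small values of $n'$, where the ratio $\#S_{n,n'}(k)/\#\algRid(k)$ stabilizes only for $k$ beyond some threshold, but in all cases the measure of that slice is a rational number whose denominator (up to powers of $\ell$) divides $c(\algQ)^{-1}$, hence divides $gl_b(\ell)$ by Lemma \ref{lemma:cG}. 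Summing over $n' \geqslant 0$ gives
\[
\int_{\{M \equiv I\, (\bmod \ell^n)\}} \ell^{-\det_\ell(TM-I)}\, d\mu = \frac{1}{c(\algQ)}\sum_{k \geqslant 0} \ell^{-(k-1)}\cdot \ell^{-k\dim\algQ}\, N_{n,k} \;+\; (\text{finite correction term}),
\]
where the finite correction accounts for the finitely many small slices where the naive formula fails; this correction lies in $\Z[1/\ell]$ times $c(\algQ)^{-1}$.

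Next I would match the infinite sum with the specialization of $P_n(t)$. Since $\dim \algQ \leqslant b^2$, writing $\dim\algQ = b^2 - d$ for some integer $d$ with $0 \leqslant d \leqslant b^2$ (note $d$ can range over finitely many values depending only on $b$), the sum becomes, up to a power of $\ell$, a constant multiple of $\sum_{k} \ell^{-dk-k}N_{n,k}$, which is exactly the shape controlled by Lemma \ref{cool} (with $n_0=0$, or with $n_0$ chosen to absorb the finitely many exceptional small slices into the explicitly-$\Z[1/\ell]$ finite sum). Lemma \ref{cool} then tells us that multiplying by $(\ell-1)\cdot r(\ell^{b^2-d},\ell^{-1})$ lands us in $\Z[1/\ell]$; and $r(\ell^{b^2-d},\ell^{-1})$, up to a power of $\ell$, divides the value at $\ell$ of a fixed polynomial, since $r(t,u)=(1-u)^{b^2+1}\prod_{0\leqslant a,a'\leqslant\tau}(1-t^au^{a'})^h$ with $\tau,h$ depending only on $b$, and $t$ ranges over the finitely many values $\ell^{b^2-d}$. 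Taking the product over all admissible $d$ of these factors, together with the $gl_b(\ell)$ coming from $c(\algQ)^{-1}$ and the extra $(\ell-1)$, defines the polynomial $q_b(t)$.

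The main obstacle I anticipate is the bookkeeping around the slices with small $n'$, where $N_{n,n'+1}=\#S_{n,n'}(n'+1)$ does not yet compute the correct Haar measure of the corresponding slice because $\#S_{n,n'}(k)/\#\algRid(k)$ has not stabilized; one must verify that this stabilization happens after a bounded number of steps (bounded in terms of $b$ alone, via Lemma \ref{matrici} and the tangent-space description of Definition \ref{def:TgSpace}, noting $\algQ(\Z_\ell)$ is open in its Zariski closure by construction) and that the finitely many exceptional terms contribute only an element of $\Z[1/\ell]\cdot c(\algQ)^{-1}$, which is handled by Lemma \ref{lemma:cG}. Everything else is a routine manipulation of geometric series and a degree count to absorb powers of $\ell$. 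The polynomial $q_b(t)$ produced this way depends only on $b$, as required, since every ingredient — $gl_b$, the finitely many values of $d$, and the parameters $\tau,h$ of $r$ — depends only on $b$.
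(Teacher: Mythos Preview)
Your proposal is correct and follows essentially the same route as the paper: slice the domain by the value $k=\det_\ell(TM-I)$, write the measure of the slice $S_{n,k}$ as $N_{n,k+1}/(c(\algQ)\ell^{(k+1)\dim\algQ})$ for $k$ large, feed the resulting tail into Lemma~\ref{cool}, and clear $c(\algQ)^{-1}$ via Lemma~\ref{lemma:cG}. Your remark that one must take the product over all $d\in\{0,\ldots,b^2\}$ of the factors $r(t^{b^2-d},t^{-1})$ (and clear the negative powers of $t$) to obtain a $q_b(t)$ genuinely independent of $\algQ$ is exactly right.

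One correction to your ``main obstacle'': the threshold beyond which $\#\algRid(k)=c(\algQ)\ell^{k\dim\algQ}$ depends on $\algQ$ and is \emph{not} bounded in terms of $b$ alone (Lemma~\ref{matrici} gives no such uniformity). Fortunately this does not matter, and the paper's treatment is simpler than what you propose. Pick any $n_0$ large enough (depending on everything); for each $k<n_0$ the measure of the slice is $\#S_{n,k}(n_0)/\#\algRid(n_0)$, and by Lemma~\ref{lemma:cG} the denominator $\#\algRid(n_0)$ already divides $gl_b(\ell)$ up to a power of $\ell$, \emph{regardless of how large $n_0$ is}. So the finite correction term lands in $gl_b(\ell)^{-1}\Z[1/\ell]$ automatically, and no bound on $n_0$ is needed.
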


\begin{proof}
Define $q_b(t):= gl_b(t) \cdot (t-1) \cdot r(t^{b^2-d},t^{-1}) t^{\deg_u r(t,u)}$, where $r(t,u)$ is as in Lemma \ref{lemma:UniversalDenominatorRationalFunction}.
Notice that if $m > \max(n,n')$ the set
\[
S_{n,n'}:=\left\{ M \bigm\vert M \in \algQ(\Z_\ell), M \equiv I\, (\bmod{\ell^n}), \mathrm{det}_\ell(TM-I)=n' \right\}
\]
is the inverse image in $\algQ(\Z_\ell)$ of the set $S_{n,n'}(m) \subseteq \algRid(m)$ from \eqref{eq:SetS}. In particular, if $n_0 > n$ is sufficiently large and $k \geqslant n_0$ we can write 
\[
\mu_{\algQ(\Z_\ell)}(S_{n,k} ) =\frac{\#S_{n,k}(k+1)}{\#\algRid(k+1)} = \frac{N_{n,k+1}}{c(\algQ) \ell^{d(k+1)}}.
\]
For $k<n_0$ we may write instead $\mu_{\algQ(\Z_\ell)} \left(S_{n,k} \right)={\#S_{n,k}(n_0)}/{\#\algRid(n_0)}$. We then have
$$ 
\resizebox{1\hsize}{!}{
$\displaystyle \int_{\{ M : M \equiv I\, (\!\bmod{\ell^n}) \}}  \ell^{-\det_\ell(T M-I)} \, d \mu_{\algQ(\Z_\ell)}(M) = \sum_{k \geqslant n_0} \ell^{-k} \frac{N_{n,k+1}}{c(\algQ) \ell^{d(k+1)}} + \sum_{k < n_0} \ell^{-k}\frac{\#S_{n,k}(n_0)}{\#\algRid(n_0)}\,.$
}
$$
The second sum, when multiplied by $q_b(\ell)$, gives an element of $\Z[1/\ell]$ by  Lemma \ref{lemma:cG}. For the first sum we may apply Lemmas \ref{lemma:cG} and \ref{cool}.
\end{proof}

\begin{proof}[Proof of Theorem \ref{conjecture-b}]
We may assume without loss of generality that the order of $\alpha$ is infinite, because otherwise $\Dens_\ell(\alpha)\in \{0,1\}$.
By Remark \ref{nieuw} and Theorem \ref{thm-general}, $\Dens_\ell(\alpha)$ is an integral multiple of 
\begin{equation}\label{F-dens}
\int_{\imGal} \function(x) \, \ell^{-\det_\ell(x-I)} \, d\mu_{\imGal}(x)\,,
\end{equation}
where $\imGal \subseteq \GL_b(\Z_\ell)$ is open in its Zariski closure $\ZarGal$ by Proposition \ref{prop:GeneralizedBogomolov}. If $n$ is sufficiently large, we can write $\imGal$ as the disjoint union of finitely many cosets $T_i \neighbourhood$, where 
\[
\neighbourhood := \{ x \in \mathcal \ZarGalId(\Z_\ell) : x \equiv I \pmod{\ell^{n}} \}\,.
\]
By Lemma \ref{lemma:Extendw}, we may take $n$ sufficiently large so that $\function$ is constant on $T_i\neighbourhood$. We then rewrite \eqref{F-dens} as the finite sum of terms of the form
\[
\int_{T_i\neighbourhood}  \function(T_i) \,  \ell^{-\det_\ell(x-I)} \, d\mu_{\imGal}(x)
= \frac{[\ZarGal(\Z_\ell) : \imGal ]}{[\ZarGal:\ZarGalId]} \cdot \function(T_i)\cdot \int_{\neighbourhood} \ell^{-\det_\ell(T_i x-I)} \, d\mu_{\ZarGalId(\mathbb{Z}_\ell)}(x).
\]
Since $\function(T_i)\in \Z[1/\ell]$ by Lemma \ref{lemma:Extendw}, we may conclude by taking  $p_b(t)=z(b)q_b(t)$, where $z(b)$ is as in Proposition \ref{prop:ConnectedComponents} and $q_b(t) \in \mathbb{Z}[t]$ is as in Proposition \ref{prop:IntegralReformulation}.
\end{proof}

\begin{proof}[Proof of Theorem \ref{thm:rationality}]
Given a group $\imGal$ (the image of the $\ell$-adic Galois representation attached to $A$) and its Zariski closure $\ZarGal$, we can find integers $N$ and $D$ with the property that $\ZarGal$ is defined by at most $N$ polynomials of degree at most $D$. One can now repeat verbatim the proof of Theorem \ref{conjecture-b}, replacing $N(b)$ and $D(b)$ with these $N$ and $D$.
\end{proof}

\bibliographystyle{abbrv}
\bibliography{biblio}

\end{document}